\begin{document}

\newcommand{\To}{\longrightarrow}
\newcommand{\h}{\mathcal{H}}
\newcommand{\s}{\mathcal{S}}
\newcommand{\A}{\mathcal{A}}
\newcommand{\K}{\mathcal{K}}
\newcommand{\B}{\mathcal{B}}
\newcommand{\W}{\mathcal{W}}
\newcommand{\M}{\mathcal{M}}
\newcommand{\Lom}{\mathcal{L}}
\newcommand{\T}{\mathcal{T}}
\newcommand{\F}{\mathcal{F}}

\newtheorem{definition}{Definition}[section]
\newtheorem{defn}[definition]{Definition}
\newtheorem{lem}[definition]{Lemma}
\newtheorem{prop}[definition]{Proposition}
\newtheorem{thm}[definition]{Theorem}
\newtheorem{cor}[definition]{Corollary}
\newtheorem{cors}[definition]{Corollaries}
\newtheorem{example}[definition]{Example}
\newtheorem{examples}[definition]{Examples}
\newtheorem{rems}[definition]{Remarks}
\newtheorem{rem}[definition]{Remark}
\newtheorem{notations}[definition]{Notations}
\theoremstyle{remark}
\theoremstyle{remark}
\theoremstyle{remark}

\theoremstyle{notations}
\theoremstyle{remark}
\theoremstyle{remark}
\theoremstyle{remark}
\newtheorem{dgram}[definition]{Diagram}
\theoremstyle{remark}
\newtheorem{fact}[definition]{Fact}
\theoremstyle{remark}
\newtheorem{illust}[definition]{Illustration}
\theoremstyle{remark}
\theoremstyle{definition}
\newtheorem{question}[definition]{Question}
\theoremstyle{definition}
\newtheorem{conj}[definition]{Conjecture}

\title{On $\mathcal{L}$-Injective Modules}

\author{Akeel Ramadan Mehdi\\ Department of Mathematics\\ College of Education\\Al-Qadisiyah University\\ Al-Qadisiyah, Iraq\\Email: akeel.mehdi@qu.edu.iq}

\maketitle

\abstract{Let $\mathcal{M}=\{(M,N,f,Q)\mid M,N,Q\in R\textup{-Mod}, \,N\leq M,\,f\in \textup{Hom}_{R}(N,Q)\}$ and let $\mathcal{L}$ be a nonempty subclass of  $\mathcal{M}.$ Jir\'{a}sko introduced the concepts of $\mathcal{L}$-injective
module as a  generalization of injective module as follows: a module $Q$ is said to be $\mathcal{L}$-injective if for each $(B,A,f,Q)\in \mathcal{L}$, there exists a homomorphism $g:B\rightarrow Q$ such that $g(a)=f(a),$ \, for all $a\in A$. The aim of this paper is to study $\mathcal{L}$-injective modules and some related concepts.}

$\vphantom{}$

\noindent \textbf{Key words and phrases:} Injective module; Generalized Fuchs criterion; Hereditary torsion theory; $\tau$-dense; Preradical;  Natural class.

$\vphantom{}$

\noindent \textbf{2010 Mathematics Subject Classification:} Primary: 16D50; Secondary: 16D10, 16S90.

\section{Introduction}

$\quad$  $\quad$\,
Throughout this article, unless otherwise specified, $R$  will denote an associative ring with non-zero identity, and all modules are left unital $R$-modules.   By a class of modules we mean a non-empty class of modules. The class of all left $R$-modules is denoted by  $R\textup{-Mod}$ and by  $\Re$ we mean the set $\{(M,N)\mid N\leq M,\, M\in R\textup{-Mod}\},$ where $N\leq M$ is a notation  means $N$ is a submodule of $M$. Given a family of modules $\{M_{i}\}_{i\in I},$ for each $j\in I,\, \pi_{j}:\bigoplus_{i\in I}M_{i}\rightarrow M_{j}$ denotes the canonical projection homomorphism. Let $M$ be a module and let $Y$  be a subset of $M$. The left annihilator of $Y$ in $R$ will be denoted by $l_{R}(Y),\;$ i.e., $l_{R}(Y)=\{r\in R\mid ry=0,\,\forall y\in Y\}.$ Given $a\in M,$ let $(Y:a)$  denote the set $\{r\in R\,\mid ra\in Y\},$ and let $ann_{R}(a):=(0:a).$ The right annihilator of a subset $I$ of $R$ in $M$ will be denoted by $r_{M}(I),$ i.e., $r_{M}(I)=\{m\in M\mid rm=0,\,\forall r\in I\}.$ The class $\{I\mid I \,\,\textup{is a left ideal of}\,\, R \,\,\textup{such that}\,\, ann_{R}(m)\subseteq I,\, \,\textup{for some}\, \,m\in M\}$ will be denoted by $\Omega(M).$

 An $R$-module $M$ is said to be injective if, for any module $B$,  every homomorphism $f:A\rightarrow M$, where $A$ is any submodule of $B,$ extends to a homomorphism $g:B\rightarrow M$ \cite{Bae40}. The notation $g\upharpoonright A= f$ means $g$ is an extension of $f$.
Let $M$ and $N$ be modules. Recall that  $N$ is said to be $M$-injective if every homomorphism from a submodule of $M$ to $N$ extends to a homomorphism from $M$ to $N$ \cite{AzMbVa75}. A module $M$  is said to be quasi-injective if $M$ is $M$-injective. The injective envelope of  a module $M$ will be denoted by $E(M)$.

Let $\tau=(\T,\F)$ be a  torsion theory. A submodule $B$ of a module $A$ is said to be $\tau$-dense in $A$ if $A/B$
is $\tau$-torsion (i.e. $A/B\in \T$). A submodule $A$ of a module $B$ is said to be $\tau$-essential in $B$ if it is $\tau$-dense and essential in $B$. A torsion theory $\tau$ is said to be noetherian if for every ascending chain $I_{1}\subseteq I_{2}\subseteq...$ of
left ideals of $R$ with $I_{\infty}=\bigcup_{j=1}^{\infty}I_{j}$ a $\tau$-dense left ideal in $R,$ there exists a positive integer
$n$ such that $I_{n}$ is $\tau$-dense in $R.$ A module $M$ is said to be $\tau$-injective if every homomorphism from a $\tau$-dense
submodule of $B$ to $M$ extends to a homomorphism from $B$ to $M$, where $B$ is any module \cite{Cri04}.
Let $M$ be an $R$-module. A $\tau$-injective envelope (or $\tau$-injective hull) of $M$ is a  $\tau$-injective module $E$ which is  a $\tau$-essential extension of $M$ \cite{Cha06}.  Every $R$-module $M$ has a $\tau$-injective envelope and it is unique up to isomorphism \cite{Cri04}. We use the notation $E_{\tau}(M)$ to stand for an $\tau$-injective envelope of $M$.
A $\tau$-injective module $E$ is said to be $\sum$-$\tau$-injective if $E^{(A)}$ is $\tau$-injective for any index set $A;$ \, $E$ is
said to be countably $\sum$-$\tau$-injective in case $E^{(C)}$ is $\tau$-injective for any countable index set $C.$ Let $E$ and  $M$ be
modules. Then  $E$ is said to be $\tau$-$M$-injective if any homomorphism from a $\tau$-dense submodule of $M$ to  $E$ extends to a homomorphism from $M$  to $E$. A module $E$ is said to be $\tau$-quasi-injective if $E$ is $\tau$-$E$-injective.

Let $\mathcal{M}=\{(M,N,f,Q)\mid M,N,Q\in R\textup{-Mod}, \,N\leq M,\,f\in \textup{Hom}_{R}(N,Q)\}$ and consider the following conditions on $\mathcal{L}$ that will be useful later, where $\mathcal{L}$  always denotes a nonempty subclass of $\M:$

$\vphantom{}$

$\left(\alpha\right)$ $(M,N,f,Q)\in\mathcal{L},\,(M,N^{\prime},f^{\prime},Q)\in\M$ and $(M,N,f,Q)\preceq(M,N^{\prime},f^{\prime},Q)$  implies
$(M,N^{\prime},f^{\prime},Q)\in\,\mathcal{L},$ where $\preceq$ is a partial order on  $\M$ defined by:\begin{center}
$(M,N,f,Q)\preceq(M^{\prime},N^{\prime},f^{\prime},Q^{\prime})\Longleftrightarrow M=M^{\prime},\,\,\,N\subseteq N^{\prime},\,\,\,Q=Q^{\prime},\,\,\,f^{\prime} \upharpoonright N= f.$
\par\end{center}

$\left(\beta\right)$ $(M,N,f,A)\in\mathcal{L},\,i:A\rightarrow B$ implies $(M,N,if,B)\in\mathcal{L},$   where $i$ is an inclusion homomorphism,

$\left(\gamma\right)$  $(M,N,f,A)\in\mathcal{L},\,g:A\rightarrow B$ an isomorphism, implies $(M,N,gf,B)\in\mathcal{L},$

$\left(\delta\right)$  $(M,N,f,A)\in\mathcal{L},\,g:A\rightarrow B$  a homomorphism, implies $(M,N,gf,B)\in\mathcal{L},$

$\left(\lambda\right)$ $(M,N,f,A)\in\mathcal{L},\, g:A\rightarrow B$ a split epimorphism, implies $(M,N,gf,B)\in\mathcal{L}$,

$\left(\mu\right)(M,N,f,Q)\in\mathcal{L}$, implies $(R,(N:x),f_{x},Q)\in\mathcal{L},\,\forall x\in M,$ where $f_{x}:(N:x)\rightarrow Q$ is a homomorphism define by $f_{x}(r)=f(rx)$, $\,\forall r\in(N:x)$,

$\vphantom{}$

Jir\'{a}sko in \cite{Jir75} introduced the concepts of $\mathcal{L}$-injective
module as a  generalization of injective module as follows: a module
$Q$ is said to be $\mathcal{L}$-injective if for each $(B,A,f,Q)\in \mathcal{L}$, there exists a homomorphism $g:B\rightarrow Q$ such that $(g \upharpoonright A)=f$. An $\mathcal{L}$-injective module $E$ is said to be an $\mathcal{L}$-injective envelope (or $\mathcal{L}$-injective hull) of a module $M$ if there is no proper $\mathcal{L}$-injective submodule of $E$ containing $M$ \cite{Jir75}. If a module $M$ has an $\mathcal{L}$-injective envelope and it is unique up to isomorphic then we will use the notation $E_{\mathcal{L}}(M)$ to stand for an $\mathcal{L}$-injective envelope of $M$. Clearly, injective module and all its generalizations are special cases of $\mathcal{L}$-injectivity.

The aim of this article is to study $\mathcal{L}$-injectivity  and some related concepts.

In section two, we give some   characterizations of $\mathcal{L}$-injective modules. For example, in Theorem~\ref{thm:(2.1.11)} we give a version of Baer's criterion for $\mathcal{L}$-injectivity.  Also, in Theorem~\ref{thm:(2.1.14)} we extend a characterization due to \cite[Theorem 2, p. 8]{Yi98} of $\mathcal{L}$-injective modules over commutative Noetherian rings.

In section three, we introduce the concepts of $\mathcal{L}$-$M$-injective module and $s$-$\mathcal{L}$-$M$-injective module as generalizations of $M$-injective modules and give some results about them.
For examples, in Theorem~\ref{thm:(2.2.5)} we prove that if $\mathcal{L}$ is a nonempty subclass of  $\mathcal{M}$ satisfies conditions $\left(\alpha\right)$, $\left(\beta\right)$ and $\left(\gamma\right)$ and $M,Q\in R\textup{-Mod}$ such that $M$ satisfies condition  $\left(E_{\mathcal{L}}\right)$, then   $Q$ is $\mathcal{L}$-$M$-injective if and only if   $f(M)\leq Q$,  for all $f\in\textup{Hom}_{R}(E_{\mathcal{L}}(M),E_{\mathcal{L}}(Q))$
with $(M,L,f$$\upharpoonright$$L,Q)\in\mathcal{L}$ where $L=\{m\in M\,\mid\:f(m)\in Q\}=M\bigcap f^{-1}(Q)$.
Also, in Proposition~\ref{prop:(2.2.14)} we generalize
\cite[Proposition 14.12, p. 66]{Cha06}, \cite[Proposition 1, p. 1954]{Bla90} and Fuchs's result in \cite{Fuc69}. Moreover,  our version of the Generalized Fuchs criterion is given in Proposition~\ref{prop:(2.2.15)} in which we prove that if $\mathcal{L}$ is a nonempty subclass of  $\mathcal{M}$ satisfies conditions
$\left(\alpha\right)$ and $\left(\mu\right)$ and $M,Q\in R\textup{-Mod}$ such that $M$ satisfies condition $\left(\mathcal{L}\right)$,
then a module $Q$ is $s$-$\mathcal{L}$-$M$-injective if and only if for each $(R,I,f,Q)\in\mathcal{L}$ with
$\textup{ker}(f)\in\Omega(M)$, \, there exists an element  \, $x\in Q$  such that  $f(a)=ax,  \;\forall a\in I$.

In section four, we study direct sums of $\mathcal{L}$-injective modules. In Proposition~\ref{prop:(2.3.3)} we prove that for any family $\{E_{\alpha}\}_{\alpha\in A}$ of  $\mathcal{L}$-injective modules , where $A$ is an infinite index set, if  $\mathcal{L}$ satisfies conditions $\left(\alpha\right),\,\,\,\left(\mu\right)$ and $\left(\delta\right)$ and $\bigoplus_{\alpha\in C}E_{\alpha}$ is an $\mathcal{L}$-injective module
for any countable subset $C$ of $A,$  then $\bigoplus_{\alpha\in A}E_{\alpha}$ is an $\mathcal{L}$-injective module. In Theorem~\ref{thm:(2.3.10)}, we prove that for any nonempty subclass $\mathcal{L}$ of $\mathcal{M}$ which satisfies conditions $(\alpha)$ and $(\delta)$ and for any nonempty class $\mathcal{K}$ of modules  closed under isomorphic copies and $\mathcal{L}$-injective hulls, if the direct sum of any family $\{E_{i}\}_{i\in N}$ of $\mathcal{L}$-injective $R$-modules in $\mathcal{K}$ is $\mathcal{L}$-injective, then every ascending chain $I_{1}\subseteq I_{2}\subseteq...$ of left ideals of $R$ in $H_{\mathcal{K}}(R)$ with $I_{\infty}=\bigcup_{j=1}^{\infty}I_{j}$\,\, $s$-$\mathcal{L}$-dense   in $R,$ terminates.  Also, in Theorem~\ref{thm:(2.3.12)} we generalize results in \cite[p. 643]{PaZh94} and  \cite[Proposition 5.3.5, p. 165]{Cri04} in which we prove that for any nonempty subclass $\mathcal{L}$ of $\mathcal{M}$ which satisfies conditions $\left(\alpha\right),\,\,\left(\mu\right),\,\,\left(\delta\right)$
and $\left(I\right)$  and for any nonempty class $\mathcal{K}$ of modules  closed under isomorphic copies and submodules, if every ascending chain
$J_{1}\subseteq J_{2}\subseteq...$ of left ideals of $R$ such that $(J_{i+1}/J_{i})\in \mathcal{K}$,  $\forall i\in\mathbb{N}$
and $J_{\infty}=\bigcup_{i=1}^{\infty}J_{i}$ \,\, $s$-$\mathcal{L}$-dense in $R$ terminates, then every direct sum of $\mathcal{L}$-injective modules in $\mathcal{K}$ is $\mathcal{L}$-injective.

Finally,  in section five,   we introduce the concept of \, $\sum$-$\mathcal{L}$-injectivity
as a generalization of $\sum$-injectivity and $\sum$-$\tau$-injectivity and prove  Theorem~\ref{thm:(2.4.4)} in which we generalize Faith's result  \cite[Proposition 3, p. 184]{Fai66} and \cite[Theorem 16.16, p. 98]{Cha06}.

\section{Some  Characterizations of  $\mathcal{L}$-Injective Modules}

$\quad$ \,One well-known result concerning injective modules states that an $R$-module $M$ is injective if and only if every homomorphism from a left
ideal of $R$ to $M$ extends to a homomorphism from $R$ to $M$ if and only if for each left ideal $I$ of $R$ and every $f\in \textup{Hom}_{R}(I,M)$, there is an $m\in M$ such that $f(r)=rm, \,\forall r\in I.$ This is known as Baer's condition \cite{Bae40}. Baer's result shows that the left ideals of $R$ form a test set for injectivity.

$\vphantom{}$

The following theorem   gives a version of Baer's criterion for $\mathcal{L}$-injectivity.

\begin{thm}\label{thm:(2.1.11)} \textbf{(Generalized Baer's Criterion)} Consider the following three conditions for an $R$-module $M$:

$\left(1\right)$ $M$ is $\mathcal{L}$-injective;

$\left(2\right)$ for every  $(R,I,f,M)\in\mathcal{L}$, there exists an $R$-homomorphism $g\in \textup{Hom}_{R}(R,M)$ such that $g(a)=f(a),$ for all $a\in I$;

$\left(3\right)$ for each $(R,I,f,M)\in\mathcal{L}$, there exists an element  $m\in M$ such that $f(r)=rm, \;\forall r\in I$.

Then $\left(2\right)$ and $\left(3\right)$ are equivalent and $\left(1\right)$ implies $\left(2\right)$. Moreover, if $\mathcal{L}$
satisfies conditions $\left(\alpha\right)$ and $\left(\mu\right)$, then all the three conditions are equivalent.
\end{thm}

\begin{proof}
$\left(1\right)\Rightarrow\left(2\right)$ and $\left(2\right)\Leftrightarrow\left(3\right)$ are obvious.

$\left(2\right)\Rightarrow\left(1\right)$ Let $\mathcal{L}$ satisfy conditions $\left(\alpha\right)$ and $\left(\mu\right)$ and let $(B,A,f,M)\in\mathcal{L}.$
Let  $S=\{(C,\varphi)\mid A\leq C\leq B\:,\:\varphi\in \textup{Hom}_{R}(C,M)$
such that $(\varphi$$\upharpoonright$$ A)=f\:\}.$ Define on $S$ a partial order $\preceq$
by
\[
(C_{1},\varphi_{1})\preceq(C_{2},\varphi_{2})\Longleftrightarrow C_{1}\leq C_{2}\; \textup{and}\;(\varphi_{2}\upharpoonright C_{1})=\varphi_{1}\]

Clearly, $S\neq\emptyset$ since $(A,f)\in S$. Furthermore, one can show that $S$ is inductive in the following manner. Let $F=\{(A_{i},f_{i})\mid i\in I\}$ be an ascending chain in $S$. Let $A_{\infty}=\cup_{i\in I}A_{i}$. Then for any $a\in A_{\infty}$ there is a $j\in I$ such that $a\in A_{j}$, and so we can define $f_{\infty}:A_{\infty}\rightarrow M$, by $f_{\infty}(a)=f_{j}(a)$. It is straightforward to check that $f_{\infty}$ is well defined and $(A_{\infty},f_{\infty})$ is an upper bound for $F$ in $S$. Then  by Zorn\textquoteright{}s Lemma,  $S$ has a maximal
element,  say  $(B^{\prime},g^{\prime})$.  We  will  prove  that  $B^{\prime}=B$.

Suppose that there exists $x\in B\setminus B^{\prime}$. It is clear that $(B,A,f,M)\preceq(B,B^{\prime},g^{\prime},M)$. Since $(B,A,f,M)$ $\in\mathcal{L}$ and $\mathcal{L}$ satisfies condition $\left(\alpha\right)$, it follows that $(B,B^{\prime},g^{\prime},M)\in\mathcal{L}.$ Since $\mathcal{L}$
satisfies condition $\left(\mu\right)$, we have  that $(R,(B^{\prime}:x),g_{x}^{\prime},M)\in\mathcal{L}.$ By hypothesis, there exists a homomorphism $g:R\rightarrow M$ such that $g(r)=g_{x}^{\prime}(r)=g^{\prime}(rx)$, \, $\forall r\in(B^{\prime}:x)$.
 Define $\psi:B^{\prime}+Rx\rightarrow M$ by $\psi(b+rx)=g^{\prime}(b)+g(r)$,\,
$\forall b\in B^{\prime}$,\, $\forall r\in R.$ It is clear that $\psi$ is a well-defined homomorphism and $(B^{\prime},g^{\prime})\preceq(B^{\prime}+Rx,\psi)$. Since $(B^{\prime}+Rx,\psi)\in S$ and $B^{\prime}\subsetneqq B^{\prime}+Rx$,  we have a contradiction to maximality of $(B^{\prime},g^{\prime})$
in $S$. Hence $B^{\prime}=B$ and this means that there exists a homomorphism $g^{\prime}:B\rightarrow M$ such that $(g^{\prime}$$\upharpoonright$$A)=f$. Thus $M$ is $\mathcal{L}$-injective.
\end{proof}

Now we will introduce the concept of $P$-filter as follows.

\begin{defn}\label{defn:(2.1.7)} Let $\Re=\{(M,N)\mid N\leq M,\, M\in R\textup{-Mod}\}$ and let $\rho$ be a nonempty subclass of $\Re$. We say that $\rho$ is a $P$-filter if $\rho$ satisfies the following conditions:

$\left(i\right)$ if $(M,N)\in\rho$ and $N\leq K\leq M$, then $(M,K)\in\rho$;

$\left(ii\right)$ for all  $M\in R\textup{-Mod}$, $(M,M)\in\rho$;

$\left(iii\right)$ if $(M,N)\in\rho$, then $(R,(N:x))\in\rho, \,\forall x\in M.$
\end{defn}

\begin{example}\label{example:(2.1.8)} All of the following subclasses of $\Re$ are $P$-filters.

$\left(1\right)$ $\rho_{\T}=\{(M,N)\in\Re\mid N\leq M$ such that $M/N\in \T, \,\,M\in R\textup{-Mod}\},$
where $\T$ is a nonempty class of modules closed under submodules and homomorphic images.

$\left(2\right)$ $\rho_{\infty}=\Re=\{(M,N)\mid N\leq M,\,\, M\in R\textup{-Mod}\}.$

$\left(3\right)$ $\rho_{\tau}=\{(M,N)\in\Re\mid N$ is  $\tau$-dense in $M,$ \,\,$M\in R\textup{-Mod}\},$ where $\tau$ is
a hereditary torsion theory.

$\left(4\right)$ $\rho_{r}=\{(M,N)\in\Re\;\mid\; N\leq M$ such that $r(M/N)=M/N,\,\,\, M\in R\textup{-Mod}\},$ where $r$ is a left exact preradical.

$\left(5\right)$ $\rho_{max}= \{(M,N)\in\Re\mid N$ is a maximal submodule in $M$ or $N=M,$$\, \,\,M\in R\textup{-Mod}\}.$

$\left(6\right)$ $\rho_{e}=\{(M,N)\in\Re\mid N\leq_{e}M,\,\,\, M\in R\textup{-Mod}\}.$

\end{example}

$\vphantom{}$

It is clear that the $P$-$\textup{filters}$ from $\left(2\right)$ to $\left(5\right)$ are special cases of $P$-filter in $\left(1\right)$. Also, if
$\rho$ is a $P$-filter then the subclass $\rho_{R}=\{(R,I)\in\rho\mid I$ is a left ideal of $R\}$ of $\Re$ is also $P$-filter.

\begin{notations}\label{notations:(2.1.9(a))} We will fix the following notations.

- For any two $P$-filters $\rho_{1}$ and $\rho_{2}$, we will denote by $\mathcal{L}_{(\rho_{1},\rho_{2})}$ the subclass $\mathcal{L}_{(\rho_{1},\rho_{2})}=\{(M,N,f,Q)\in\M\mid M,N,Q\in R\textup{-Mod},\,(M,N)\in\rho_{1}$ and $f\in \textup{Hom}_{R}(N,Q)$ such that $(M,\textup{ker}(f))\in\rho_{2}\}.$

-For any two nonempty classes of modules $\T$ and $\F,$ we will denote by $\mathcal{L}_{(\T,\F)}$ the subclass $\mathcal{L}_{(\T,\F)}=\{(M,N,f,Q)\in\M\mid M,N,Q\in R\textup{-Mod},\, N\leq M$ such that $M/N\in \T$ and $f\in \textup{Hom}_{R}(N,Q)$  with $M/\textup{ker}(f)\in \F\}$. It is clear that $\mathcal{L}_{(\T,\F)}=\mathcal{L}_{(\rho_{\T},\rho_{\F})},$  when $\T$ and $\F$ are closed under submodules and homomorphic images.

- For any two preradicals $r$ and $s$, we will denote by $\mathcal{L}_{(r,s)}$ the subclass $\mathcal{L}_{(r,s)}=\{(M,N,f,Q)\in\M\,\,\mid  \,\,M,N,Q\in R\textup{-Mod}, \,\,\,N\leq M$ such that  $r(M/N)=M/N$  and  $f\in \,\,\textup{Hom}_{R}(N,Q)$  with $s(M/\textup{ker}(f))=M/\textup{ker}(f)\}$. It is clear that $\mathcal{L}_{(r,s)}=\mathcal{L}_{(\rho_{r},\rho_{s})},$  when $r$ and $s$ are left exact preradicals.

- For any torsion theory $\tau$, we will denote by $\mathcal{L}_{\tau}$ the subclass $\mathcal{L}_{\tau}=\{(M,N,f,Q)\in\M\mid M,N,Q\in R\textup{-Mod},\, N$ is a $\tau$-dense in $M$ and $f\in \textup{Hom}_{R}(N,Q)\}.$  It is clear that $\mathcal{L}_{\tau}=\mathcal{L}_{(\rho_{\tau},\rho_{\infty})},$  when $\tau$ is a hereditary torsion theory.

\end{notations}

\begin{lem}\label{lem:(2.1.9)} Let $\rho_{1}$ and $\rho_{2}$ be two $P$-filters.  Then $\mathcal{L}_{(\rho_{1},\rho_{2})}$ satisfies conditions $\left(\alpha\right)$, $\left(\delta\right)$ and $\left(\mu\right)$.
\end{lem}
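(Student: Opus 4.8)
The plan is to verify each of the three conditions $(\alpha)$, $(\delta)$, $(\mu)$ directly from the three defining axioms of a $P$-filter, exploiting the fact that the only constraints on membership in $\mathcal{L}_{(\rho_{1},\rho_{2})}$ are the two pairs $(M,N)\in\rho_{1}$ and $(M,\ker(f))\in\rho_{2}$. The recurring observation will be that enlarging $N$ or post-composing $f$ can only enlarge the relevant kernel, so axiom (i) of a $P$-filter (upward closure between a submodule and the whole module) transports membership along. Thus none of the three verifications should require any idea beyond tracking kernels and applying the $P$-filter axioms.

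For condition $(\alpha)$, I would take $(M,N,f,Q)\in\mathcal{L}_{(\rho_{1},\rho_{2})}$ together with a larger datum $(M,N',f',Q)\in\M$ with $N\subseteq N'$ and $f'\upharpoonright N=f$. Membership of the larger datum requires $(M,N')\in\rho_{1}$ and $(M,\ker(f'))\in\rho_{2}$. The first follows immediately from $(M,N)\in\rho_{1}$ and axiom (i), since $N\leq N'\leq M$. For the second, the key step is the inclusion $\ker(f)\subseteq\ker(f')$: any $a\in\ker(f)$ lies in $N\subseteq N'$ and satisfies $f'(a)=f(a)=0$. Then $\ker(f)\leq\ker(f')\leq M$ together with $(M,\ker(f))\in\rho_{2}$ and axiom (i) yields $(M,\ker(f'))\in\rho_{2}$.

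Condition $(\delta)$ is handled in the same spirit. Given $(M,N,f,A)\in\mathcal{L}_{(\rho_{1},\rho_{2})}$ and a homomorphism $g:A\to B$, the pair $(M,N)\in\rho_{1}$ is untouched, so only $(M,\ker(gf))\in\rho_{2}$ must be produced. Since $f(a)=0$ forces $gf(a)=g(0)=0$, one has $\ker(f)\subseteq\ker(gf)\leq M$, and axiom (i) again upgrades $(M,\ker(f))\in\rho_{2}$ to $(M,\ker(gf))\in\rho_{2}$.

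Condition $(\mu)$ is the one calling for an actual computation, and I expect the identification of $\ker(f_{x})$ to be the main (though mild) obstacle. Starting from $(M,N,f,Q)\in\mathcal{L}_{(\rho_{1},\rho_{2})}$ and $x\in M$, membership of $(R,(N:x),f_{x},Q)$ needs $(R,(N:x))\in\rho_{1}$ and $(R,\ker(f_{x}))\in\rho_{2}$. The first is precisely axiom (iii) applied to $(M,N)\in\rho_{1}$. For the second, I would compute, for $r\in(N:x)$, that $f_{x}(r)=f(rx)=0$ holds if and only if $rx\in\ker(f)$, which delivers the clean formula $\ker(f_{x})=(\ker(f):x)$. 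Applying axiom (iii) to $(M,\ker(f))\in\rho_{2}$ then gives $(R,(\ker(f):x))\in\rho_{2}$, that is $(R,\ker(f_{x}))\in\rho_{2}$, completing the verification. Overall the lemma is a bookkeeping exercise translating the $P$-filter axioms through kernels and the colon operation, with the identity $\ker(f_{x})=(\ker(f):x)$ being the only point that merits care.
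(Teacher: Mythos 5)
Your proposal is correct and follows essentially the same route as the paper: the paper dismisses $(\alpha)$ and $(\delta)$ as clear (you supply the kernel-inclusion details via axiom (i)), and for $(\mu)$ it uses exactly your argument, namely axiom (iii) for $(R,(N:x))\in\rho_{1}$ together with the identity $\ker(f_{x})=(\ker(f):x)$ and axiom (iii) again for $\rho_{2}$.
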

\begin{proof} Conditions $\left(\alpha\right)$ and $\left(\delta\right)$  are clear.

Condition $\left(\mu\right)$: Let $(M,N,f,Q)\in\mathcal{L}_{(\rho_{1},\rho_{2})}$    and   let $x\in M,$ thus $(M,N)\in\rho_{1}$ and $f\in \textup{Hom}_{R}(N,Q)$ such that $(M,\textup{ker}(f))\in\rho_{2}$. Since $\rho_{1}$ is a $P$-filter,  $(R,(N:x))\in\rho_{1}.$ It is easy to prove that $\textup{ker}(f_{x})=(\textup{ker}(f):x).$ Since $(M,\textup{ker}(f))\in\rho_{2}$ and $\rho_{2}$   is a $P$-filter, $(R,(\textup{ker}(f):x))\in\rho_{2}$ and hence  $(R,\textup{ker}(f_{x}))\in\rho_{2}$ and this implies that $(R,(N:x),f_{x},Q)\in\mathcal{L}_{(\rho_{1},\rho_{2})}$. Therefore $\mathcal{L}_{(\rho_{1},\rho_{2})}$ satisfies  condition $\left(\mu\right)$.
\end{proof}

 The following corollary is a generalization of Baer's result in \cite{Bae40}, \cite[Proposition 2.1, p. 201]{Ste75},
\cite[Baer's Lemma 2.2, p. 628]{Jir75} and \cite[Theorem 2.4, p. 319]{Bea72}.

\begin{cor}\label{cor:(2.1.12)} Let $\rho_{1}$ and $\rho_{2}$ be any two $P$- filters. Then the following conditions are  equivalent for $R$-module
$M$:

 $\left(1\right)$  $M$ is $\mathcal{L}_{(\rho_{1},\rho_{2})}$-injective;

$\left(2\right)$ for every $(R,I,f,M)\in\mathcal{L}_{(\rho_{1},\rho_{2})}$, there  exists an $R$-homomorphism $g\in \textup{Hom}_{R}(R,M)$ such that $g(a)=f(a),$ for all $a\in I$;

$\left(3\right)$ for each $(R,I,f,M)\in\mathcal{L}_{(\rho_{1},\rho_{2})}$, there exists an element  $m\in M$ such that  $f(r)=rm,\, \forall r\in I$.
\end{cor}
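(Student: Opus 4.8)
The plan is to recognize this corollary as an immediate specialization of the Generalized Baer's Criterion (Theorem~\ref{thm:(2.1.11)}) to the particular class $\mathcal{L}_{(\rho_{1},\rho_{2})}$. Theorem~\ref{thm:(2.1.11)} already establishes, for an arbitrary nonempty subclass $\mathcal{L}$ of $\M$, that conditions $(2)$ and $(3)$ are equivalent and that $(1)$ implies $(2)$; moreover it gives the full equivalence of all three conditions provided $\mathcal{L}$ satisfies $\left(\alpha\right)$ and $\left(\mu\right)$. So the entire task reduces to verifying that the specific class $\mathcal{L}_{(\rho_{1},\rho_{2})}$ meets these two hypotheses.

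First I would invoke Lemma~\ref{lem:(2.1.9)}, which asserts that for any two $P$-filters $\rho_{1}$ and $\rho_{2}$ the class $\mathcal{L}_{(\rho_{1},\rho_{2})}$ satisfies conditions $\left(\alpha\right)$, $\left(\delta\right)$ and $\left(\mu\right)$. In particular $\mathcal{L}_{(\rho_{1},\rho_{2})}$ satisfies $\left(\alpha\right)$ and $\left(\mu\right)$, which are exactly the hypotheses required by the ``moreover'' clause of Theorem~\ref{thm:(2.1.11)}.

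Then, setting $\mathcal{L}=\mathcal{L}_{(\rho_{1},\rho_{2})}$ in Theorem~\ref{thm:(2.1.11)} and applying that clause, I would conclude that conditions $(1)$, $(2)$ and $(3)$ are all equivalent for the $R$-module $M$, which is precisely the assertion of the corollary.

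I do not expect any substantive obstacle here: all of the genuine work has already been discharged in the two earlier results. The delicate Zorn's-lemma extension argument that yields $(2)\Rightarrow(1)$ lives in the proof of Theorem~\ref{thm:(2.1.11)}, while the verification of condition $\left(\mu\right)$ for $\mathcal{L}_{(\rho_{1},\rho_{2})}$ from the $P$-filter axioms (in particular the identity $\textup{ker}(f_{x})=(\textup{ker}(f):x)$ together with Definition~\ref{defn:(2.1.7)}$\left(iii\right)$) is carried out in the proof of Lemma~\ref{lem:(2.1.9)}. The corollary is simply the composition of these two facts, so the proof reduces to citing them.
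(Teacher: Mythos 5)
Your proposal is correct and is exactly the paper's proof: the paper proves this corollary by citing Lemma~\ref{lem:(2.1.9)} (which gives conditions $\left(\alpha\right)$ and $\left(\mu\right)$ for $\mathcal{L}_{(\rho_{1},\rho_{2})}$) together with Theorem~\ref{thm:(2.1.11)}. Nothing is missing.
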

\begin{proof} By Lemma~\ref{lem:(2.1.9)} and Theorem~\ref{thm:(2.1.11)}.
\end{proof}

 The following characterization of  $\mathcal{L}$-injectivity  is a generalization of \cite[Proposition 1.4, p. 3]{Smi97} and
 \cite[Proposition 2.1.3, p. 53]{Cri04}.

\begin{prop}\label{prop:(2.1.13)} Consider the following three conditions for $R$-module $M$:

$\left(1\right)$  $Q$ is $\mathcal{L}$-injective;

$\left(2\right)$ For every $(M,N,f,Q)\in\mathcal{L}$ with $N\leq_{e}M$,    the homomorphism $f$ extends to a homomorphism from $M$ to $Q$;

 $\left(3\right)$  For every $(R,I,f,Q)\in\mathcal{L}$ with $I\leq_{e}R$,  the homomorphism $f$ extends to a homomorphism from $R$ to $Q$.

 Then $\left(1\right)$ implies $\left(2\right)$, $\left(2\right)$
implies $\left(3\right)$ and, if $\mathcal{L}$ satisfies  conditions $\left(\alpha\right)$
and $\left(\mu\right)$, then $\left(3\right)$ implies$\left(1\right)$.
\end{prop}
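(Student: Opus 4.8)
The plan is to establish the three implications in order, exploiting the reductions already proved. For $(1)\Rightarrow(2)$ I would note this is essentially trivial: condition $(2)$ restricts the extension problem to the special case where the source submodule $N$ is essential in $M$, so it is simply an instance of the defining property of $\mathcal{L}$-injectivity. No hypotheses on $\mathcal{L}$ are needed here, since we are merely restricting the class of diagrams we must complete.

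For $(2)\Rightarrow(3)$ I would specialize the module $M$ in $(2)$ to the free module $R$ and the submodule $N$ to a left ideal $I$. The hypothesis in $(3)$ is that $I\leq_e R$, which matches the essentiality requirement of $(2)$; so a diagram as in $(3)$ with $(R,I,f,Q)\in\mathcal{L}$ and $I\leq_e R$ is a special case of the diagrams considered in $(2)$, and hence can be completed. This implication is again purely formal.

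The substantive implication is $(3)\Rightarrow(1)$, and this is where I expect the real work. Here I would invoke the Generalized Baer's Criterion, Theorem~\ref{thm:(2.1.11)}: under conditions $\left(\alpha\right)$ and $\left(\mu\right)$, $\mathcal{L}$-injectivity of $Q$ is equivalent to the property that every diagram with source $(R,I,f,Q)\in\mathcal{L}$ (over the whole ring $R$, not just over essential ideals) can be completed. So it suffices to upgrade $(3)$, which only handles essential left ideals, to the full Baer-type condition handling arbitrary left ideals $I$ with $(R,I,f,Q)\in\mathcal{L}$. The standard device is an essential-closure argument: given such an $I$ and $f\colon I\to Q$, I would choose (using Zorn's Lemma) a left ideal $K$ maximal with respect to $K\cap I=0$, so that $I\oplus K$ is essential in $R$. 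I then extend $f$ to $I\oplus K$ by sending $K$ to $0$, obtaining a map $\bar f\colon I\oplus K\to Q$ with $I\oplus K\leq_e R$. The delicate point is that I must ensure $(R,I\oplus K,\bar f,Q)\in\mathcal{L}$ so that $(3)$ applies; this is exactly where condition $\left(\alpha\right)$ is used, since $(R,I,f,Q)\preceq(R,I\oplus K,\bar f,Q)$ and $\left(\alpha\right)$ propagates membership in $\mathcal{L}$ upward along $\preceq$. Applying $(3)$ to $\bar f$ yields a completion $g\colon R\to Q$ with $g\upharpoonright(I\oplus K)=\bar f$, and in particular $g\upharpoonright I=f$, which is precisely the Baer condition. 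By Theorem~\ref{thm:(2.1.11)}, $Q$ is then $\mathcal{L}$-injective.

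The main obstacle is the membership bookkeeping in the essential-closure step: one must verify that the extension-by-zero $\bar f$ on $I\oplus K$ genuinely yields a quadruple lying in $\mathcal{L}$, and that $I\oplus K$ is essential in $R$ so that $(3)$ is applicable. The essentiality of $I\oplus K$ follows from the maximality of $K$ (a routine fact), and the membership follows from condition $\left(\alpha\right)$ together with the evident inequality $(R,I,f,Q)\preceq(R,I\oplus K,\bar f,Q)$; condition $\left(\mu\right)$ enters only indirectly, through its role in Theorem~\ref{thm:(2.1.11)}, to convert the resulting Baer condition back into genuine $\mathcal{L}$-injectivity.
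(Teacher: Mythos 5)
Your proposal is correct and follows essentially the same route as the paper: the first two implications are formal specializations, and for $(3)\Rightarrow(1)$ the paper likewise takes a complement $I^{c}$ of $I$ (so that $I\oplus I^{c}\leq_{e}R$), extends $f$ by zero, uses condition $\left(\alpha\right)$ via the relation $(R,I,f,Q)\preceq(R,I\oplus I^{c},g,Q)$ to keep the quadruple in $\mathcal{L}$, and then concludes by the Generalized Baer's Criterion (Theorem~\ref{thm:(2.1.11)}), where $\left(\mu\right)$ is used. No substantive differences.
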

\begin{proof}$\left(1\right)\Rightarrow\left(2\right)$ and $\left(2\right)\Leftrightarrow\left(3\right)$ are obvious.

$\left(3\right)\Rightarrow\left(1\right)$ Let $\mathcal{L}$ satisfy  $\left(\alpha\right)$ and $\left(\mu\right)$
and  let $(R,I,f,Q)\in\mathcal{L}$. Let $I^{c}$ be a complement left ideal of $I$  in $R$
and let $C=I\oplus I^{c}$. Thus by \cite[Proposition 5.21, p. 75]{AnFu92},
 $C\leq_{e}R$. Define $g:C=I\oplus I^{c}\rightarrow Q$
 by $g(a+b)=f(a)\;,\;$ $\forall a\in I$ and $\forall b\in I^{c}$.
It is clear that $g$ is a well-defined homomorphism and $(R,I,f,Q)\preceq(R,C,g,Q)$. Since $\mathcal{L}$ satisfies condition  $\left(\alpha\right)$,
 $(R,C,g,Q)\in\mathcal{L}$. By hypothesis, there exists a homomorphism
$h:R\rightarrow Q$ such that $(h \upharpoonright C)=g.$ Thus $(h \upharpoonright I)=(g \upharpoonright I)=f$
and this implies that $Q$ is $\mathcal{L}$-injective, by Theorem~\ref{thm:(2.1.11)}.
\end{proof}

In the following theorem we extend a  characterization due to \cite[Theorem 2, p. 8]{Yi98} of $\mathcal{L}$-injective modules over commutative Noetherian rings.

\begin{thm}\label{thm:(2.1.14)} Let $R$ be a commutative Noetherian ring, let  $M$ be an $R$-module and suppose that $\mathcal{L}$ satisfies conditions $\left(\alpha\right)$ and $\left(\mu\right)$. Then  $M$ is $\mathcal{L}$-injective if and only if for every $(R,I,f,M)\in\mathcal{L}$ with $I$ is a prime ideal of $R$,   the homomorphism f extends to a homomorphism
from $R$ to $M$.
\end{thm}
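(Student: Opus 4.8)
The plan is to prove the nontrivial (``if'') direction by a maximal-counterexample argument over the Noetherian ring $R$, reducing everything to ideals via the Generalized Baer's Criterion. The ``only if'' direction is immediate: if $M$ is $\mathcal{L}$-injective then by definition every diagram with $(R,I,f,M)\in\mathcal{L}$ completes, in particular those with $I$ prime. So I would assume the prime-ideal testing condition and aim to deduce $\mathcal{L}$-injectivity. Since $\mathcal{L}$ satisfies $\left(\alpha\right)$ and $\left(\mu\right)$, Theorem~\ref{thm:(2.1.11)} tells me it suffices to show that for \emph{every} $(R,I,f,M)\in\mathcal{L}$ there is an $m\in M$ with $f(r)=rm$ for all $r\in I$; equivalently, that every such $f$ extends to a homomorphism on $R$.

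First I would suppose toward a contradiction that the family $\Sigma$ of ideals $I$ carrying some non-extendable $f$ with $(R,I,f,M)\in\mathcal{L}$ is nonempty. Because $R$ is Noetherian, $\Sigma$ has a maximal element $I_{0}$, witnessed by a non-extendable $f_{0}$ with $(R,I_{0},f_{0},M)\in\mathcal{L}$. The heart of the argument is to show $I_{0}$ must be prime. Note $I_{0}\neq R$ (a map defined on all of $R$ extends trivially), so if $I_{0}$ were not prime there would be $a,b\in R\setminus I_{0}$ with $ab\in I_{0}$, whence $I_{0}\subsetneq(I_{0}:b)$ because $a\in(I_{0}:b)\setminus I_{0}$.

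Next I would exploit $\left(\mu\right)$: applied to $(R,I_{0},f_{0},M)$ with $x=b$ it yields $(R,(I_{0}:b),(f_{0})_{b},M)\in\mathcal{L}$, where $(f_{0})_{b}(r)=f_{0}(rb)$. By maximality of $I_{0}$ the strictly larger ideal $(I_{0}:b)$ lies outside $\Sigma$, so $(f_{0})_{b}$ extends: there is $m\in M$ with $f_{0}(rb)=rm$ for all $r\in(I_{0}:b)$. Using this $m$ I would define $h\colon I_{0}+Rb\to M$ by $h(x+rb)=f_{0}(x)+rm$ and check it is well defined, since the relation $x_{1}+r_{1}b=x_{2}+r_{2}b$ forces $r_{1}-r_{2}\in(I_{0}:b)$, and the identity $f_{0}((r_{1}-r_{2})b)=(r_{1}-r_{2})m$ makes the two candidate values agree. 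Then $h$ restricts to $f_{0}$ on $I_{0}$, so $(R,I_{0},f_{0},M)\preceq(R,I_{0}+Rb,h,M)$, and condition $\left(\alpha\right)$ gives $(R,I_{0}+Rb,h,M)\in\mathcal{L}$. Since $b\notin I_{0}$ we have $I_{0}\subsetneq I_{0}+Rb$, so by maximality $h$ extends to $R$; but any extension of $h$ extends $f_{0}$, contradicting the choice of $f_{0}$. Hence $I_{0}$ is prime, and the prime-ideal hypothesis then forces $f_{0}$ to extend, contradicting $I_{0}\in\Sigma$. Therefore $\Sigma=\emptyset$ and $M$ is $\mathcal{L}$-injective by Theorem~\ref{thm:(2.1.11)}.

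I expect the main obstacle to be orchestrating the interplay of $\left(\alpha\right)$ and $\left(\mu\right)$ so that each enlarged map stays inside $\mathcal{L}$: $\left(\mu\right)$ is what places $(f_{0})_{b}$ into $\mathcal{L}$ so that maximality of $I_{0}$ applies to $(I_{0}:b)$, while $\left(\alpha\right)$ is what certifies that the glued map $h$ on $I_{0}+Rb$ is still an $\mathcal{L}$-morphism, so that maximality applies there too. The only genuinely computational point is the well-definedness of $h$, which hinges on routing the congruence $x_{2}-x_{1}=(r_{1}-r_{2})b$ through the identity $f_{0}(rb)=rm$; the Noetherian hypothesis enters solely to produce the maximal element $I_{0}$.
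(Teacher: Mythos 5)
Your proof is correct, but it takes a genuinely different route from the paper's. The paper does not reduce to ideals first: it runs the Zorn's lemma argument of Theorem~\ref{thm:(2.1.11)} on an arbitrary $(B,A,f,M)\in\mathcal{L}$, obtains a maximal partial extension $(B',g')$, and then invokes Yi's Theorem 1 (\cite[Theorem 1, p. 8]{Yi98}) as a black box to produce, for $x\in B\setminus B'$, an element $r_0\in R$ with $r_0x\notin B'$ and $(B':r_0x)$ prime; conditions $\left(\alpha\right)$ and $\left(\mu\right)$ then place $(R,(B':r_0x),g'_{r_0x},M)$ in $\mathcal{L}$, the prime-testing hypothesis extends it, and the usual pushout to $B'+Rr_0x$ contradicts maximality. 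You instead quotient the whole problem through the Generalized Baer's Criterion (Theorem~\ref{thm:(2.1.11)}) to work only with left ideals of $R$, take a maximal member $I_0$ of the family of ``bad'' ideals (legitimate since $R$ is Noetherian), and prove directly, by the classical Cohen-type argument with $a,b\notin I_0$, $ab\in I_0$, that $I_0$ must be prime --- using $\left(\mu\right)$ to put $(f_0)_b$ on $(I_0:b)\supsetneq I_0$ into $\mathcal{L}$ and $\left(\alpha\right)$ to keep the glued map on $I_0+Rb\supsetneq I_0$ in $\mathcal{L}$, so that maximality applies twice. What your approach buys is self-containment: you never need Yi's structural result on quotients $(B':r_0x)$ for arbitrary modules, only the elementary fact that a maximal non-extendable ideal is prime, with the Noetherian hypothesis used transparently to produce $I_0$. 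What the paper's approach buys is that the prime-ideal reduction is delegated entirely to the cited theorem, at the cost of repeating the Zorn's lemma machinery at the module level. All the individual steps you outline (well-definedness of $h$, the two applications of maximality, the roles of $\left(\alpha\right)$ and $\left(\mu\right)$) check out.
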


\begin{proof}
$(\Longrightarrow)$ This is obvious.

$(\Longleftarrow)$
Let  $(B,A,f,M)\in\mathcal{L}$ and let  $S=\{(C,\varphi)\mid A\leq C\leq B\:,\:\varphi\in \textup{Hom}_{R}(C,M)$
such that $(\varphi \upharpoonright A)=f\:\}.$ Define on $S$ a partial order $\preceq$
by
\[
(C_{1},\varphi_{1})\preceq(C_{2},\varphi_{2})\Longleftrightarrow C_{1}\leq C_{2}\; and\;(\varphi_{2}\upharpoonright C_{1})=\varphi_{1}.\]

As in the proof of Theorem~\ref{thm:(2.1.11)}, we can prove that $S$ has a maximal element, say $(B^{\prime},g^{\prime})$. We will prove that $B^{\prime}=B$. Suppose that there exists $x\in B\setminus B^{\prime}$. By \cite[Theorem 1, p. 8]{Yi98}, there  exists an element $r_{0}\in R$ such that $(B^{\prime}:r_{0}x)$ is a prime ideal in $R$ and $r_{0}x\notin B^{\prime}$. It is clear that $(B,A,f,M)\preceq(B,B^{\prime},g^{\prime},M)$. Since
$(B,A,f,M)\in\mathcal{L}$ and $\mathcal{L}$ satisfies condition $\left(\alpha\right)$, it follows that $(B,B^{\prime},g^{\prime},M)\in\mathcal{L}.$ Since
$\mathcal{L}$ satisfies condition  $\left(\mu\right)$,  $(R,(B^{\prime}:b),g_{b}^{\prime},M)\in\mathcal{L},\, \forall b\in B.$ Put $y=r_{0}x$, thus $y\in B\setminus B^{\prime}$
and hence $(R,(B^{\prime}:y),g_{y}^{\prime},M)\in\mathcal{L}$. By hypothesis, there exists a homomorphism  $g:R\rightarrow M$ such that
$g(r)=g_{y}^{\prime}(r)=g^{\prime}(ry),$ $\forall r\in(B^{\prime}:y)$. Define $\psi:B^{\prime}+Ry\rightarrow M$
by $\psi(b+ry)=g^{\prime}(b)+g(r),\,\forall b\in B^{\prime},\,\forall r\in R.$
As in the proof of Theorem~\ref{thm:(2.1.11)}, we can prove that $\psi$ is a
well-defined homomorphism and  $(B^{\prime},g^{\prime})\preceq(B^{\prime}+Ry,\psi)$. Since $(B^{\prime}+Ry,\psi)\in S$ and $B^{\prime}\subsetneqq B^{\prime}+Ry$, we have a contradiction to maximality of $(B^{\prime},g^{\prime})$
in $S$. Hence $B^{\prime}=B$ and this mean that there exists a
homomorphism $g^{\prime}:B\rightarrow M$ such that $(g^{\prime}$$\upharpoonright$$A)=f$.
Thus M is $\mathcal{L}$-injective.
\end{proof}

\begin{cor}\label{cor:(2.1.15)} Let $\rho_{1}$ and $\rho_{2}$ be any two $P$-$filters$,let $R$ be a commutative Noetherian ring and let  $M$ be an $R$-module. Then $M$ is $\mathcal{L}_{(\rho_{1},\rho_{2})}$-injective if and only if  for every  $(R,I,f,M)\in\mathcal{L}_{(\rho_{1},\rho_{2})}$ with $I$ is a prime ideal of $R$,  the homomorphism $f$ extends to a homomorphism from $R$ to $M$.
\end{cor}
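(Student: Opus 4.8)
The plan is to deduce Corollary~\ref{cor:(2.1.15)} directly from Theorem~\ref{thm:(2.1.14)} by supplying the two hypotheses that Theorem~\ref{thm:(2.1.14)} requires of a general subclass $\mathcal{L}$, namely conditions $\left(\alpha\right)$ and $\left(\mu\right)$. Since the corollary fixes $\mathcal{L}=\mathcal{L}_{(\rho_{1},\rho_{2})}$ for two arbitrary $P$-filters $\rho_{1},\rho_{2}$, the key observation is that Lemma~\ref{lem:(2.1.9)} already guarantees that $\mathcal{L}_{(\rho_{1},\rho_{2})}$ satisfies $\left(\alpha\right)$, $\left(\delta\right)$ and $\left(\mu\right)$; in particular it satisfies the two conditions $\left(\alpha\right)$ and $\left(\mu\right)$ that Theorem~\ref{thm:(2.1.14)} needs.

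The steps I would carry out are short. First I would invoke Lemma~\ref{lem:(2.1.9)} to record that, for the two given $P$-filters $\rho_{1}$ and $\rho_{2}$, the subclass $\mathcal{L}_{(\rho_{1},\rho_{2})}$ satisfies conditions $\left(\alpha\right)$ and $\left(\mu\right)$. Second, since $R$ is assumed commutative Noetherian and $M$ is an $R$-module, all the hypotheses of Theorem~\ref{thm:(2.1.14)} are met with $\mathcal{L}$ instantiated as $\mathcal{L}_{(\rho_{1},\rho_{2})}$. Third, I would apply Theorem~\ref{thm:(2.1.14)} verbatim: it states exactly that $M$ is $\mathcal{L}_{(\rho_{1},\rho_{2})}$-injective if and only if every diagram with $(R,I,f,M)\in\mathcal{L}_{(\rho_{1},\rho_{2})}$ and $I$ a prime ideal of $R$ can be completed to a commutative diagram, which is the assertion of the corollary.

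Thus the proof is essentially a one-line specialization: Lemma~\ref{lem:(2.1.9)} verifies the structural hypotheses and Theorem~\ref{thm:(2.1.14)} then gives the prime-ideal test directly. I do not anticipate a genuine obstacle here, since both cited results are available in the excerpt and the corollary is merely their composition; the only thing to be careful about is to note explicitly that the relevant $P$-filter closure properties (Definition~\ref{defn:(2.1.7)}) feed into Lemma~\ref{lem:(2.1.9)}, so that conditions $\left(\alpha\right)$ and $\left(\mu\right)$ are legitimately in force before Theorem~\ref{thm:(2.1.14)} is applied.
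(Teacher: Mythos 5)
Your proposal is correct and matches the paper's proof exactly: the paper also derives Corollary~\ref{cor:(2.1.15)} by citing Lemma~\ref{lem:(2.1.9)} to establish conditions $\left(\alpha\right)$ and $\left(\mu\right)$ for $\mathcal{L}_{(\rho_{1},\rho_{2})}$ and then applying Theorem~\ref{thm:(2.1.14)}. No gaps; this is precisely the intended one-line specialization.
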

\begin{proof} By Lemma~\ref{lem:(2.1.9)} and Theorem~\ref{thm:(2.1.14)}.
\end{proof}

\begin{cor}\label{cor:(2.1.16)} \textbf{(\cite[Theorem 2, p. 8]{Yi98})}  Let $R$ be a commutative Noetherian ring, let  $M$ be an $R$-module. Then  $M$ is injective if and only if every homomorphism $f:I\rightarrow M$ with  $I$ is a prime ideal
of $R$, can be extended to a homomorphism from $R$ to $M$.
\end{cor}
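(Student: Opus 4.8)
The plan is to obtain this statement as the special case $\rho_{1}=\rho_{2}=\rho_{\infty}$ of Corollary~\ref{cor:(2.1.15)}, where $\rho_{\infty}=\Re$ is the full class of pairs. First I would recall from Example~\ref{example:(2.1.8)}(2) that $\rho_{\infty}=\Re$ is indeed a $P$-filter, so that Corollary~\ref{cor:(2.1.15)} is applicable with this choice of both filters.

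The key step is to identify $\mathcal{L}_{(\rho_{\infty},\rho_{\infty})}$ with all of $\M$. By the definition of $\mathcal{L}_{(\rho_{1},\rho_{2})}$, a quadruple $(M,N,f,Q)\in\M$ lies in $\mathcal{L}_{(\rho_{\infty},\rho_{\infty})}$ precisely when $(M,N)\in\rho_{\infty}$ and $(M,\textup{ker}(f))\in\rho_{\infty}$. Since $\rho_{\infty}=\Re$ contains \emph{every} pair $(M,K)$ with $K\leq M$, both membership conditions hold automatically; hence $\mathcal{L}_{(\rho_{\infty},\rho_{\infty})}=\M$. Consequently $\mathcal{L}_{(\rho_{\infty},\rho_{\infty})}$-injectivity is exactly $\M$-injectivity, which coincides with ordinary injectivity as observed in Remark~\ref{rem:(2.1.6)}(iii).

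Finally I would substitute these identifications into Corollary~\ref{cor:(2.1.15)}. On the left-hand side, $\mathcal{L}_{(\rho_{\infty},\rho_{\infty})}$-injectivity of $M$ becomes injectivity of $M$. On the right-hand side, the hypothesis that $(R,I,f,M)\in\mathcal{L}_{(\rho_{\infty},\rho_{\infty})}$ is vacuous for every homomorphism $f$ from a left ideal $I$ of $R$ (again because membership in $\M$ is automatic), so the test condition collapses to the single requirement that $I$ be a prime ideal of $R$, with the diagram completable for every such $I$. This is precisely the asserted equivalence.

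There is essentially no serious obstacle here: the argument is a direct specialization. The only point that genuinely needs care is verifying that both $P$-filter conditions defining $\mathcal{L}_{(\rho_{\infty},\rho_{\infty})}$ are vacuous, so that the class is literally all of $\M$ and $\M$-injectivity is genuine injectivity; once this is checked, the result drops out of Corollary~\ref{cor:(2.1.15)} (or equivalently Theorem~\ref{thm:(2.1.14)} applied to $\mathcal{L}=\M$, which trivially satisfies conditions $\left(\alpha\right)$ and $\left(\mu\right)$).
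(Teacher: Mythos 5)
Your proposal is correct and is exactly the paper's argument: the paper proves this corollary by taking $\rho_{1}=\rho_{2}=\Re$ and applying Corollary~\ref{cor:(2.1.15)}, and your additional verifications (that $\Re$ is a $P$-filter, that $\mathcal{L}_{(\Re,\Re)}=\M$, and that $\M$-injectivity is ordinary injectivity) correctly fill in the details the paper leaves implicit.
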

\begin{proof} By taking the two $P$-filters $\rho_{1} =\rho_{2}$$=\Re$   and applying Corollary~\ref{cor:(2.1.15)}.
\end{proof}

\section{$\mathcal{L}$-$M$-Injectivity and $s$-$\mathcal{L}$-$M$-Injectivity}

$\quad$ In this section, we introduce the concepts of $\mathcal{L}$-$M$-injective modules and $s$-$\mathcal{L}$-$M$-injective modules as generalizations of $M$-injective modules and give some results about them.

\begin{defn}\label{defn:(2.2.1)} Let $M,Q\in R\textup{-Mod}$. A module $Q$ is said to be $\mathcal{L}$-$M$-injective, if
for every  $(M,N,f,Q)\in\mathcal{L}$,  the homomorphism $f$ extends to a homomorphism from $M$ to $Q$. \, A module \, $Q$ \, is said to be $\mathcal{L}$-quasi-injective, if $Q$ is $\mathcal{L}$-$Q$-injective.
 \end{defn}

$\vphantom{}$

Let $M,Q\in R\textup{-Mod}$, it is well-known that a module $Q$ is $M$-injective if and only if $f(M)\leq Q$, for every homomorphism $f:E(M)\rightarrow E(Q)$ \cite[Lemma 1.13, p. 7]{MoMu90}.

$\vphantom{}$

For an analogous result for $\mathcal{L}$-$M$-injectivity we first fix the following condition.

$\vphantom{}$

$\left(E_{\mathcal{L}}\right)$:  Let $\mathcal{L}$
be a subclass of $\M$. Then a module $M$ satisfies condition $\left(E_{\mathcal{L}}\right)$ if  $M$ has an $\mathcal{L}$-injective envelope which is unique up to $M$-isomorphism and   $(E_{\mathcal{L}}(M),N,f,Q) \in\mathcal{L}$
whenever $(M,N,f,Q)\in\mathcal{L}$.

$\vphantom{}$

The next theorem is the first main result of this section in which we give a generalization of \cite[Lemma 1.13, p. 7]{MoMu90}
and \cite[Theorem 2.1, p. 34]{Cri01}.

\begin{thm}\label{thm:(2.2.5)} Let $M,Q\in R\textup{-Mod}$ and let $\mathcal{L}$ satisfy conditions $\left(\alpha\right)$, $\left(\beta\right)$ and $\left(\gamma\right)$. Consider the following two conditions

$\left(1\right)$ $Q$ is $\mathcal{L}$-$M$-injective.

$\left(2\right)$  $f(M)\leq Q$,  for all $f\in\textup{Hom}_{R}(E_{\mathcal{L}}(M),E_{\mathcal{L}}(Q))$
with $(M,L,f$$\upharpoonright$$L,Q)\in\mathcal{L}$ where $L=\{m\in M\,\mid\:f(m)\in Q\}=M\bigcap f^{-1}(Q)$.

Then $\left(1\right)$ implies $\left(2\right)$ and, if $M$ satisfies condition  $\left(E_{\mathcal{L}}\right)$, then $\left(2\right)$
implies $\left(1\right)$.
\end{thm}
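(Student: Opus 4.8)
The plan is to prove the two implications separately, modeling the argument on the standard proof that $Q$ is $M$-injective iff $f(M)\le Q$ for every $f\colon E(M)\to E(Q)$, but replacing the role of the ordinary injective envelope by the $\mathcal{L}$-injective envelope $E_{\mathcal{L}}(-)$, whose existence and uniqueness are guaranteed by Theorem~\ref{thm:(1.2.8)} under the standing hypotheses $(\alpha)$, $(\beta)$, $(\gamma)$.

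For $(1)\Rightarrow(2)$, I would start with an arbitrary $f\in\textup{Hom}_R(E_{\mathcal{L}}(M),E_{\mathcal{L}}(Q))$ such that $(M,L,f\!\upharpoonright\!L,Q)\in\mathcal{L}$, where $L=M\cap f^{-1}(Q)$. The restriction $f\!\upharpoonright\!L$ is then a homomorphism $L\to Q$ defined on the submodule $L\le M$, and the diagram with $L\hookrightarrow M$ on top and $f\!\upharpoonright\!L\colon L\to Q$ down the side lies in $\mathcal{L}$. Since $Q$ is $\mathcal{L}$-$M$-injective, this diagram completes: there is $g\colon M\to Q$ with $g\!\upharpoonright\!L=f\!\upharpoonright\!L$. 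The goal is now to show $f(M)\le Q$, equivalently that $f$ and the composite $M\xrightarrow{g}Q\hookrightarrow E_{\mathcal{L}}(Q)$ agree on all of $M$. They agree on $L$, so $h:=f\!\upharpoonright\!M-g$ kills $L$ and maps $M$ into $E_{\mathcal{L}}(Q)$; I would argue that $h(M)\cap Q=0$ because any $m$ with $h(m)\in Q$ would force $f(m)\in Q$ (using $g(m)\in Q$), hence $m\in L$, hence $h(m)=0$. Combining this with the essentiality built into the $\mathcal{L}$-injective envelope (that $E_{\mathcal{L}}(Q)$ has no proper $\mathcal{L}$-injective submodule containing $Q$, and that there are no ``extra'' copies mapping disjointly from $Q$) should force $h(M)=0$, giving $f(M)=g(M)\le Q$.

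For $(2)\Rightarrow(1)$, assuming $M$ satisfies $\left(E_{\mathcal{L}}\right)$, I would take an arbitrary diagram $N\hookrightarrow M$, $f_0\colon N\to Q$ with $(M,N,f_0,Q)\in\mathcal{L}$ and seek to complete it. The key device is to lift $f_0$ through the ambient $\mathcal{L}$-injective envelopes: view $f_0$ as a map $N\to Q\le E_{\mathcal{L}}(Q)$; since $E_{\mathcal{L}}(Q)$ is $\mathcal{L}$-injective and, by condition $\left(E_{\mathcal{L}}\right)$, the quadruple $(E_{\mathcal{L}}(M),N,f_0,E_{\mathcal{L}}(Q))\in\mathcal{L}$ (after using $(\beta)$ to enlarge the target from $Q$ to $E_{\mathcal{L}}(Q)$ and the defining clause of $\left(E_{\mathcal{L}}\right)$ to enlarge the domain from $M$ to $E_{\mathcal{L}}(M)$), there is $f\colon E_{\mathcal{L}}(M)\to E_{\mathcal{L}}(Q)$ extending $f_0$. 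Now set $L=M\cap f^{-1}(Q)$; since $N\le L$ and $f\!\upharpoonright\!N=f_0$, condition $(\alpha)$ promotes $(M,N,f_0,Q)\in\mathcal{L}$ to $(M,L,f\!\upharpoonright\!L,Q)\in\mathcal{L}$. Hypothesis $(2)$ then yields $f(M)\le Q$, so $f\!\upharpoonright\!M$ is in fact a homomorphism $M\to Q$ extending $f_0$, completing the original diagram and proving $Q$ is $\mathcal{L}$-$M$-injective.

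I expect the main obstacle to be the lifting step in $(2)\Rightarrow(1)$: carefully justifying that the extension $f_0\colon N\to Q$ really does extend to $f\colon E_{\mathcal{L}}(M)\to E_{\mathcal{L}}(Q)$. This requires using $(\beta)$ and the membership clause of $\left(E_{\mathcal{L}}\right)$ in the right order to ensure the relevant quadruple sits in $\mathcal{L}$, and then invoking $\mathcal{L}$-injectivity of $E_{\mathcal{L}}(Q)$ to obtain the extension; one must check that the enlargement of domain to $E_{\mathcal{L}}(M)$ and of codomain to $E_{\mathcal{L}}(Q)$ does not leave the class $\mathcal{L}$. In $(1)\Rightarrow(2)$ the delicate point is instead the disjointness/essentiality argument showing $h(M)=0$; the cleanest route is to verify $h(M)\cap Q=0$ and then appeal to the fact that $Q$ is $\mathcal{L}$-essential (in the appropriate sense) in $E_{\mathcal{L}}(Q)$, so that no nonzero submodule of the image can avoid $Q$.
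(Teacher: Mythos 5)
Your proposal follows essentially the same route as the paper's proof: for $(1)\Rightarrow(2)$ the paper likewise extends $f\!\upharpoonright\!L$ to $h\colon M\to Q$, shows $Q\cap(f-h)(M)=0$ by exactly your argument, and concludes via the fact (citing Jir\'asko, Theorem 1.19) that $Q$ is essential in $E_{\mathcal{L}}(Q)$; for $(2)\Rightarrow(1)$ it uses $\left(E_{\mathcal{L}}\right)$ and $(\beta)$ to place $(E_{\mathcal{L}}(M),N,if,E_{\mathcal{L}}(Q))$ in $\mathcal{L}$, extends by $\mathcal{L}$-injectivity of $E_{\mathcal{L}}(Q)$, and promotes $(M,N,f,Q)$ to $(M,L,h\!\upharpoonright\!L,Q)$ by $(\alpha)$, just as you describe. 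The only point to firm up is the essentiality step, which is ordinary essentiality of $Q$ in $E_{\mathcal{L}}(Q)$ guaranteed under $(\alpha)$, $(\beta)$, $(\gamma)$, not something you need to re-derive.
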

\begin{proof}
$\left(1\right)\Rightarrow\left(2\right)$ Let $f\in \textup{Hom}_{R}(E_{\mathcal{L}}(M),E_{\mathcal{L}}(Q))$
with $(M,L,f$$\upharpoonright$$L,Q)\in\mathcal{L}$, where $L=\{m\in M\,\mid\:f(m)\in Q\}=M\bigcap f^{-1}(Q)$.
Define $g:L\rightarrow Q$ by $g(a)=f(a)$, \,$\forall a\in L.\; \,\,(\: i.e.,\,\, g=f$$\upharpoonright$$L)$. It is clear that $g$ is a homomorphism
and $(M,L,g,Q)\in\mathcal{L}$. By $\mathcal{L}$-$M$-injectivity of $Q$, there exists a homomorphism
$h:M\rightarrow Q$ such that $(h$$\upharpoonright$$L)=g.$ Since $Q\bigcap(f$-$h)(M)=0$ and $Q$ is an
essential submodule of $E_{\mathcal{L}}(Q)$ (by \cite[Theorem 1.19 ,p. 627]{Jir75}), it follows that $(f$-$h)(M)=0$
and this implies that $f(M)=h(M)\leq Q$.

$\left(2\right)\Rightarrow\left(1\right)$ Let $M$ satisfy condition $\left(E_{\mathcal{L}}\right)$  and let $(M,N,f,Q)\in\mathcal{L}$, thus $(E_{\mathcal{L}}(M),N,f,Q)\in\mathcal{L}$. Since $\mathcal{L}$ satisfies condition $\left(\beta\right)$, we have that  $(E_{\mathcal{L}}(M),N,if,E_{\mathcal{L}}(Q))\in\mathcal{L}$, where $i$ is the inclusion mapping from $Q$ into  $E_{\mathcal{L}}(Q)$.
By $\mathcal{L}$-injectivity of $E_{\mathcal{L}}(Q)$, there exists
a homomorphism $h:E_{\mathcal{L}}(M)\rightarrow E_{\mathcal{L}}(Q)$
such that $h(n)=f(n)$, \, $\forall n\in N.$ \,Let $L=\{m\in M\:\mid\:h(m)\in Q\}$. We
will prove that $(M,L,g,Q)\in\mathcal{L}$, where $g=h \upharpoonright L$. Let $x\in N$, thus $h(x)=f(x)\in Q$ and hence $x\in L$. Thus $N\leq L$
and $(g \upharpoonright N)=f$. Thus $(M,N,f,Q)\preceq(M,L,g,Q)$. Since $\mathcal{L}$ satisfies  condition $\left(\alpha \right)$,  $(M,L,g,Q)\in\mathcal{L}$.  By hypothesis, we have that $h(M)\leq Q$ and hence  $h^{\prime}=h$$\upharpoonright  M:M\rightarrow Q$ is such that $(h^{\prime} \upharpoonleft N)=f$. Thus   $Q$ is an $\mathcal{L}$-$M$-injective module.
\end{proof}

\begin{cor}\label{cor:(2.2.6)} Let $M,Q\in R\textup{-Mod}$ and let $\rho_{1}$ and $\rho_{2}$
be any two $P$-filters. If $M$ satisfies condition  $(E_{\mathcal{L}_{(\rho_{1},\rho_{2})}})$,
  then the following two conditions are equivalent.

$\left(1\right)$  $Q$ is $\mathcal{L}_{(\rho_{1},\rho_{2})}$-$M$-injective;

$\left(2\right)$  $f(M)\leq Q$, for all $f\in \textup{Hom}_{R}(E_{\mathcal{L}_{(\rho_{1},\rho_{2})}}(M),E_{\mathcal{L}_{(\rho_{1},\rho_{2})}}(Q))$
with $(M,L,f$$\upharpoonright$$L,Q)\in\mathcal{L}$ where $L=\{m\in M\,\mid\:f(m)\in Q\}=M\bigcap f^{-1}(Q)$.
\end{cor}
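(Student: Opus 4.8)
The plan is to recognize this statement as a direct specialization of Theorem~\ref{thm:(2.2.5)} to the subclass $\mathcal{L}=\mathcal{L}_{(\rho_{1},\rho_{2})}$. That theorem establishes exactly the equivalence of (1) and (2) in general, provided the underlying subclass $\mathcal{L}$ satisfies conditions $(\alpha)$, $(\beta)$ and $(\gamma)$ and $M$ satisfies $(E_{\mathcal{L}})$. The hypothesis on $M$ is granted here with $\mathcal{L}=\mathcal{L}_{(\rho_{1},\rho_{2})}$, so the entire task reduces to checking that $\mathcal{L}_{(\rho_{1},\rho_{2})}$ meets the three closure conditions required by Theorem~\ref{thm:(2.2.5)}.

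For this verification I would invoke Lemma~\ref{lem:(2.1.9)}, which asserts that for any two $P$-filters $\rho_{1}$ and $\rho_{2}$ the subclass $\mathcal{L}_{(\rho_{1},\rho_{2})}$ satisfies $(\alpha)$, $(\delta)$ and $(\mu)$. The one point requiring attention is that the lemma supplies $(\delta)$ rather than $(\beta)$ and $(\gamma)$ directly, whereas Theorem~\ref{thm:(2.2.5)} is stated in terms of $(\beta)$ and $(\gamma)$. The key observation is that both of these are special cases of $(\delta)$: taking the homomorphism $g\colon A\to B$ in $(\delta)$ to be an inclusion yields $(\beta)$, and taking $g$ to be an isomorphism yields $(\gamma)$. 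Hence $(\delta)$ implies both $(\beta)$ and $(\gamma)$, and so $\mathcal{L}_{(\rho_{1},\rho_{2})}$ satisfies $(\alpha)$, $(\beta)$ and $(\gamma)$.

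With these three conditions established and with $M$ satisfying $(E_{\mathcal{L}_{(\rho_{1},\rho_{2})}})$ by hypothesis, I would then simply apply Theorem~\ref{thm:(2.2.5)} to $\mathcal{L}=\mathcal{L}_{(\rho_{1},\rho_{2})}$: the implication $(1)\Rightarrow(2)$ follows from its first part, and the converse $(2)\Rightarrow(1)$ follows since $M$ satisfies $(E_{\mathcal{L}_{(\rho_{1},\rho_{2})}})$, giving the asserted equivalence. I do not anticipate any genuine obstacle, since all the content is carried by Theorem~\ref{thm:(2.2.5)} and Lemma~\ref{lem:(2.1.9)}; the only step needing explicit mention is the elementary remark that $(\delta)$ subsumes $(\beta)$ and $(\gamma)$, which bridges the conclusions of the lemma to the hypotheses of the theorem.
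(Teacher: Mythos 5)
Your proposal is correct and follows exactly the paper's own route: the paper's proof is simply ``By Lemma~\ref{lem:(2.1.9)} and Theorem~\ref{thm:(2.2.5)},'' and your explicit observation that $(\delta)$ subsumes $(\beta)$ and $(\gamma)$ is precisely the (unstated) bridge the paper relies on.
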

\begin{proof}
By Lemma~\ref{lem:(2.1.9)} and Theorem~\ref{thm:(2.2.5)}.
\end{proof}

 Let $M,Q\in R\textup{-Mod}$ and let $\tau$ be any hereditary torsion theory. A module $Q$ is $s$-$\tau$-$M$-injective if, for any $N\leq M$, any homomorphism from a $\tau$-dense  submodule of $N$ to $Q$ extends to a homomorphism from $N$ to $Q$  \cite[Definition 14.6, p. 65]{Cha06}.

 As a generalization  of $s$-$\tau$-$M$-injectivity and hence of  $M$-injectivity
we introduce the concept of $s$-$\mathcal{L}$-$M$-injectivity as follows.

\begin{defn}\label{defn:(2.2.11)} Let $M,Q\in R\textup{-Mod}$. A module $Q$ is said to be $s$-$\mathcal{L}$-$M$-injective
if $Q$ is $\mathcal{L}$-$N$-injective, for all $N\leq M$. A module $Q$ is said to be $s$-$\mathcal{L}$-quasi-injective if $Q$ is
$s$-$\mathcal{L}$-$Q$-injective.
\end{defn}

 Fuchs in \cite{Fuc69} has obtained a condition similar to Baer's Criterion that characterizes quasi-injective modules, Bland
in \cite{Bla90} has generalized that to $s$-$\tau$-quasi-injective modules, and Charalambides in \cite{Cha06} has generalized that
to $s$-$\tau$-$M$-injective modules.

$\vphantom{}$

Our next aim is to generalize Fuchs's condition once again in order to characterize $s$-$\mathcal{L}$-$M$-injective modules. We begin
with the following condition.

$\vphantom{}$

 $\left(\mathcal{L}\right)$:  Let $\mathcal{L}$ be a subclass of $\M$ and let $M$ be a module . Then $M$ satisfies condition $\left(\mathcal{L}\right)$ if for every $(B,A,f,Q)\in\mathcal{L}$, then $(Rm,(A:x)m,f_{(x,m)},Q)\in\mathcal{L}$, for all $m\in M$ and $x\in B$ with $ann_{R}(m)\subseteq(\textup{ker}(f):x)$, where $f_{(x,m)}:(A:x)m\rightarrow Q$ is a well-defined homomorphism
defined by $f_{(x,m)}(rm)=f(rx)$, for all $r\in(A:x)$ .

 A subclass $\mathcal{L}$ of $\M$ is said to be fully
subclass if every $R$-module satisfies condition $\left(\mathcal{L}\right)$.

$\vphantom{}$

\begin{example}\label{example:(2.2.13)} All of the following subclasses of $\M$ are fully subclasses.

$\left(1\right)$ $\mathcal{L}_{(T,F)}$ where $T$ and $F$ are nonempty
classes of modules closed under submodules and  homomorphic
images.

$\left(2\right)$  $\mathcal{L}=\M$.

 $\left(3\right)$ $ \mathcal{L}_{\tau}$,  where $\tau$  is a hereditary torsion theory.

$\left(4\right)$ $\mathcal{L}_{(\rho,\sigma)}$, where $\rho$ and $\sigma$ are left exact preradicals.
\end{example}

\begin{proof} $\left(1\right)$ Let $(B,A,f,Q)\in\mathcal{L}_{(T,F)}$ and let $m\in M, x\in B$
such that $ann_{R}(m)\subseteq(\textup{ker}(f):x)$. By Lemma~\ref{lem:(2.1.9)}  we have that $\quad$$\mathcal{L}_{(T,F)}$  satisfies condition
$\left(\mu\right)$  and  this  implies  that $(B,(A:x),f_{x},Q)\in\mathcal{L}_{(T,F)}$.
It is clear that $(R/(Im:m))\simeq(Rm/Im)$ and $(R/(Jm:m))\simeq(Rm/Jm)$
where $I=(A:x)$ and $J=\textup{ker}(f_{x})$ . Since $\mathcal{L}_{(T,F)}=\mathcal{L}_{(\rho_{1},\rho_{2})}$, where $\rho_{1}$ and $\rho_{2}$ defined by $\rho_{1}=\{(M,N)\in\Re\mid N\leq M$ such that $M/N\in T,\, M\in R\textup{-Mod}\}$ and $\rho_{2}=\{(M,N)\in\Re\mid N\leq M$
such that $M/N\in F,\, M\in R\textup{-Mod}\}$, it follows that $(R,I)\in\rho_{1}$ and $(R,J)\in\rho_{2}$. Since $I\leq(Im:m)\leq R$ and $J\leq(Jm:m)\leq R$
and $\rho_{1}$, $\rho_{2}$ are $P$-filters (by example~\ref{example:(2.1.8)}),  $(R,(Im:m))\in\rho_{1}$ and $(R,(Jm:m))\in\rho_{2}$
and this implies that $(R/(Im:m))\in T$ and $(R/(Jm:m))\in F$. Since $T$ and $F$ are closed under homomorphic images,  $(Rm/Im)\in T$
and $(Rm/Jm)\in F$. Since $\textup{ker}(f_{x})=(\textup{ker}(f):x)$  and  $\textup{ker}(f_{(x,m)})=(\textup{ker}(f):x)m$, we have that
 $\textup{ker}(f_{(x,m)})=\textup{ker}(f_{x})m=Jm$ and this implies that $(Rm/(A:x)m)\in T$ and $(Rm/\textup{ker}(f_{(x,m)}))\in F$.Thus $(Rm,(A:x)m,f_{(x,m)},Q)\in\mathcal{L}_{(T,F)}$
and hence $\mathcal{L}_{(T,F)}$ is a fully subclass.

$\left(2\right),\left(3\right)$ and $\left(4\right)$ are special cases of $\left(1\right)$.
\end{proof}

$\vphantom{}$

 In following proposition,  we generalize \cite[Proposition 14.12, p. 66]{Cha06},
 \cite[Proposition 1, p. 1954]{Bla90} and Fuchs's result in \cite{Fuc69}, and it is necessary for our version of the Generalized Fuchs  criterion
.

\begin{prop}\label{prop:(2.2.14)} Consider the following statements, where $M,Q\in R\textup{-Mod}$:

 $\left(1\right)$ $Q$ is $s$-$\mathcal{L}$-$M$-injective;

$\left(2\right)$  if $m\in M$ with $(Rm,K,f,Q)\in\mathcal{L}$, then  the homomorphism $f$ extends to a homomorphism from $Rm$ to $Q$;

$\left(3\right)$  if $K\leq N$ are modules, not necessarily submodules of $M$ such that  $(N,K,f,Q)\in\mathcal{L}$ and $\Omega(N)\subseteq\Omega(M)$,  then the homomorphism $f$ extends to a homomorphism from $N$ to $Q$.

 Then $\left(1\right)$ implies $\left(2\right)$ and $\left(3\right)$
implies $\left(1\right)$. Moreover, if $\mathcal{L}$ satisfies condition
$\left(\alpha\right)$  and $M$ satisfies condition $\left(\mathcal{L}\right)$, then all above statements are equivalent.
\end{prop}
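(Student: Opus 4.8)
The plan is to establish the cycle of implications $(1)\Rightarrow(2)$, $(3)\Rightarrow(1)$, and—under the extra hypotheses that $\mathcal{L}$ satisfies $\left(\alpha\right)$ and $M$ satisfies $\left(\mathcal{L}\right)$—to close the loop with $(2)\Rightarrow(3)$, so that all three become equivalent. The implications $(1)\Rightarrow(2)$ and $(3)\Rightarrow(1)$ should be essentially formal. For $(1)\Rightarrow(2)$: each cyclic submodule $Rm$ of $M$ is a submodule of $M$, so by Definition~\ref{defn:(2.2.11)} (and Remark~\ref{rem:(2.2.12)}$(2)$) $Q$ is $\mathcal{L}$-$Rm$-injective, which is exactly the diagram-completion property asserted in $(2)$. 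For $(3)\Rightarrow(1)$: given any $N\leq M$ and a diagram with $(N,K,f,Q)\in\mathcal{L}$, I note that every $n\in N\subseteq M$ satisfies $\textup{ann}_R(n)\supseteq\textup{ann}_R(n)$ trivially, so $\Omega(N)\subseteq\Omega(M)$; applying $(3)$ extends $f$ to $N$, which gives $\mathcal{L}$-$N$-injectivity of $Q$ for all $N\leq M$, i.e. $s$-$\mathcal{L}$-$M$-injectivity.

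The substantive implication is $(2)\Rightarrow(3)$, and this is where I expect the main obstacle. I would take a diagram $K\hookrightarrow N$ with $(N,K,f,Q)\in\mathcal{L}$ and $\Omega(N)\subseteq\Omega(M)$, and run a Zorn's-lemma maximal-extension argument exactly as in the proof of Theorem~\ref{thm:(2.1.11)}: let $S$ be the poset of pairs $(C,\varphi)$ with $K\leq C\leq N$ and $\varphi\upharpoonright K=f$, ordered by extension, and extract a maximal element $(N',g')$. The goal is to show $N'=N$. Suppose not, and pick $x\in N\setminus N'$. By $\left(\alpha\right)$ we get $(N,N',g',Q)\in\mathcal{L}$.

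The crux is to produce a single-generator lifting at $x$ so as to extend $g'$ past $N'$, contradicting maximality; this is precisely where condition $\left(\mathcal{L}\right)$ and the annihilator hypothesis $\Omega(N)\subseteq\Omega(M)$ enter. Since $x\in N$, we have $\textup{ann}_R(x)\in\Omega(N)\subseteq\Omega(M)$, so there exists $m\in M$ with $\textup{ann}_R(m)\subseteq\textup{ann}_R(x)\subseteq((N':x):\text{stuff})$—more precisely I would verify $\textup{ann}_R(m)\subseteq(\textup{ker}(g'_x):\,\cdot\,)$ so that condition $\left(\mathcal{L}\right)$ applies to $(N,N',g',Q)$, yielding $(Rm,(N':x)m,g'_{(x,m)},Q)\in\mathcal{L}$ where $g'_{(x,m)}(rm)=g'(rx)$. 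This is a diagram over the cyclic module $Rm\leq M$, so hypothesis $(2)$ completes it: there is $h:Rm\to Q$ agreeing with $g'_{(x,m)}$ on $(N':x)m$. Translating $h$ back through the correspondence $rm\leftrightarrow rx$ gives a well-defined map on $Rx$ agreeing with $g'$ on $N'\cap Rx=(N':x)x$, which lets me define $\psi:N'+Rx\to Q$ by $\psi(b+rx)=g'(b)+h(rm)$. The well-definedness check mirrors the calculation in Theorem~\ref{thm:(2.1.11)} (reducing to showing $g'$ and $h$ agree on the overlap), and the annihilator containment is exactly what guarantees the bridge map $rm\mapsto rx$ is well-defined. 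This contradicts maximality of $(N',g')$, forcing $N'=N$ and completing the extension. The delicate point—and the one I would write out most carefully—is verifying $\textup{ann}_R(m)\subseteq(\textup{ker}(g'):x)$ so that condition $\left(\mathcal{L}\right)$ is genuinely applicable, together with the well-definedness of $g'_{(x,m)}$ and of $\psi$ on the overlap $B'\cap Rx$.
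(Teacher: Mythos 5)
Your proposal is correct and follows essentially the same route as the paper: the formal implications $(1)\Rightarrow(2)$ and $(3)\Rightarrow(1)$ are handled identically, and your $(2)\Rightarrow(3)$ is the paper's argument — Zorn's lemma to get a maximal extension $(X,h)$, condition $\left(\alpha\right)$ to place $(N,X,h,Q)$ in $\mathcal{L}$, the hypothesis $\Omega(N)\subseteq\Omega(M)$ to produce $m\in M$ with $\textup{ann}_{R}(m)\subseteq \textup{ann}_{R}(n)\subseteq(\textup{ker}(h):n)$, condition $\left(\mathcal{L}\right)$ to obtain the cyclic datum $(Rm,(X:n)m,h_{(n,m)},Q)\in\mathcal{L}$, and then hypothesis $(2)$ plus the map $x+rn\mapsto h(x)+\varphi^{*}(rm)$ to contradict maximality. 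The "delicate point" you flag (that $\textup{ann}_{R}(m)\subseteq(\textup{ker}(h):n)$ makes $h_{(n,m)}$ well defined) is exactly the containment the paper verifies.
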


\begin{proof}
$\left(1\right)\Rightarrow\left(2\right)$ Let $m\in M$  with $(Rm,K,f,Q)\in\mathcal{L}$. Thus $Q$ is
$\mathcal{L}$-$Rm$-injective, since $Q$ is $s$-$\mathcal{L}$-$M$-injective and hence  there exists a homomorphism $g:Rm\rightarrow Q$ such that $(g$$\upharpoonright$$K)=f$.

 $\left(2\right)\Rightarrow\left(3\right)$ Let $\mathcal{L}$ satisfy  condition $\left(\alpha\right)$  and
$M$ satisfy condition $\left(\mathcal{L}\right)$. Let $K\leq N$ be modules, not necessarily submodules of $M$ with \, $(N,K,f,Q)\in\mathcal{L}$ and $\Omega(N)\subseteq\Omega(M)$. \,Let \, $S=\{(C,\varphi)\mid K\leq C\leq N, \, \varphi\in \textup{Hom}_{R}(C,M)$
such that $(\varphi \upharpoonright K)=f\}.$ Define on $S$ a partial order $\preceq$ by
\[
(C_{1},\varphi_{1})\preceq(C_{2},\varphi_{2})\Longleftrightarrow C_{1}\leq C_{2}\; and\;(\varphi_{2}\upharpoonright C_{1})=\varphi_{1}\]
As in the proof of Theorem~\ref{thm:(2.1.11)}, we can prove that $S$ has a maximal element, say $(X,h)$. It suffices to show that $X=N$. Suppose that
there exists $n\in N\setminus X$. It is clear that $(N,K,f,Q)\preceq(N,X,h,Q)$.
Since $(N,K,f,Q)\in\mathcal{L}$ and $\mathcal{L}$ satisfies condition $\left(\alpha\right)$,  $(N,X,h,Q)\in\mathcal{L}.$
Since $ann_{R}(n)\in\Omega(N)$ and $\Omega(N)\subseteq\Omega(M)$ (by assumption),  $ann_{R}(n)\in\Omega(M)$
and this implies that there exists $m\in M$ such that $ann_{R}(m)\subseteq ann_{R}(n)$. Since $ann_{R}(n)\subseteq(\textup{ker}(h):n)$, $ann_{R}(m)\subseteq(\textup{ker}(h):n)$. Since $m\in M$ and $n\in N\setminus X$ such that $ann_{R}(m)\subseteq(\textup{ker}(h):n)$
and since $M$ satisfies condition $\left(\mathcal{L}\right)$,  $(Rm,(X:n)m,h_{(n,m)},Q)\in\mathcal{L}$.
By hypothesis, there exists a homomorphism $\varphi^{*}:Rm\rightarrow Q$
such that $\varphi^{*}(am)=h_{(n,m)}(am)$, for all $am\in(X:n)m$. Define $h^{*}:X+Rn\rightarrow Q$ by $h^{*}(x+rn)=h(x)+\varphi^{*}(rm)$, \,$\forall x\in X$ and $\forall r\in R$. Clearly $h^{*}$ is a well-defined homomorphism. For all $a\in K$, we have that $h^{*}(a)=h^{*}(a+0.n)=h(a)+\varphi^{*}(0.m)=h(a)=f(a)$ and hence $(h^{*}$$\upharpoonright$$K)=f$. Since $K\leq X+Rn\leq N$,  $(X+Rn,h^{*})\in S$. Since $(h^{*} \upharpoonright X)=h$ and $X\leq X+Rn\leq N$,  $(X,h)\preceq(X+Rn,h^{*})$.
Since $n\in X+Rn$ and $n\notin X$, $X\subsetneqq X+Rn$ and this contradicts the maximality of $(X,h)$ in $S$. Thus $X=N$ and this implies that there exists a homomorphism $h:N\rightarrow Q$ such that $(h$$\upharpoonright$$K)=f$.

 $\left(3\right)\Rightarrow\left(1\right)$ Let $N\leq M$  with $(N,K,f,Q)\in\mathcal{L}$. Let  $I\in\Omega(N),$ thus  there  exists an element $n\in N$  such  that $ann_{R}(n)\subseteq I$ and hence there exists an element $n\in M$ such that $ann_{R}(n)\subseteq I$ and this implies that $I\in\Omega(M)$ and so $\Omega(N)\subseteq\Omega(M)$. By hypothesis, there exists a homomorphism $g:N\rightarrow Q$ such
that $(g$$\upharpoonright$$K)=f$. Thus $Q$ is $\mathcal{L}$-$N$-injective module, for all $N\leq M$ and this implies that $Q$ is $s$-$\mathcal{L}$-$M$-injective.
\end{proof}

 Follow we give the last main result of this section in which we generalize \cite[Proposition 14.13, p. 68]{Cha06},
 \cite[Proposition 2, p. 1955]{Bla90} and \cite[Lemma 2, p. 542]{Fuc69}. It is our version of Generalized Fuchs criterion.

\begin{prop}\label{prop:(2.2.15)} \textbf{{(Generalized Fuchs criterion) }} Consider the following conditions, where $M,Q\in R\textup{-Mod}$.

$\left(1\right)$ $Q$ is $s$-$\mathcal{L}$-$M$-injective;

$\left(2\right)$  for each $(R,I,f,Q)\in\mathcal{L}$ with
$\textup{ker}(f)\in\Omega(M)$,  the homomorphism $f$ extends to a homomorphism
from $R$ to $Q$;

$\left(3\right)$  for each $(R,I,f,Q)\in\mathcal{L}$ with
$\textup{ker}(f)\in\Omega(M)$, \, there exists an element  \, $x\in Q$  such that  $f(a)=ax,  \;\forall a\in I$.

 Then $\left(2\right)\Leftrightarrow\left(3\right)$ and if $M$ satisfies condition $\left(\mathcal{L}\right)$ then $\left(1\right)$
implies $\left(2\right)$. Moreover, if $\mathcal{L}$ satisfies conditions
$\left(\alpha\right)$ and $\left(\mu\right)$, then $\left(2\right)$ implies $\left(1\right)$.
\end{prop}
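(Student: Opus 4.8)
The plan is to prove the four implications suggested by the structure of the statement, reusing the machinery already built in Proposition~\ref{prop:(2.2.14)} and the Generalized Baer's Criterion (Theorem~\ref{thm:(2.1.11)}). The equivalence $\left(2\right)\Leftrightarrow\left(3\right)$ is the easy part and should come first: given $(R,I,f,Q)\in\mathcal{L}$, completing the diagram means producing $g\in\textup{Hom}_{R}(R,Q)$ with $g\upharpoonright I=f$, and such $g$ is determined by the single element $x=g(1)\in Q$ via $g(a)=ax$; conversely any $x\in Q$ defines such a $g$. So $\left(2\right)$ and $\left(3\right)$ are merely two phrasings of the same extension property, exactly as in the $\left(2\right)\Leftrightarrow\left(3\right)$ step of Theorem~\ref{thm:(2.1.11)}.

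For $\left(1\right)\Rightarrow\left(2\right)$ I would assume $M$ satisfies condition $\left(\mathcal{L}\right)$ and take a diagram with $(R,I,f,Q)\in\mathcal{L}$ and $\textup{ker}(f)\in\Omega(M)$. Since $Q$ is $s$-$\mathcal{L}$-$M$-injective, Proposition~\ref{prop:(2.2.14)} (statement $\left(2\right)$ there) tells us $Q$ has the extension property for diagrams $K\hookrightarrow Rm$ with $(Rm,K,f,Q)\in\mathcal{L}$. The point is to convert the data $(R,I,f,Q)$ with $\textup{ker}(f)\in\Omega(M)$ into such a cyclic diagram: because $\textup{ker}(f)\in\Omega(M)$, there is $m\in M$ with $ann_{R}(m)\subseteq\textup{ker}(f)$, and then $R\cong Rm$ as the map $r\mapsto rm$ has kernel $ann_{R}(m)\subseteq\textup{ker}(f)\subseteq I$, so $f$ factors through $Rm$. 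Applying the cyclic extension property over $Rm$ and pulling back along $R\twoheadrightarrow Rm$ yields the required extension to $R$. Here one must check that the transported quadruple still lies in $\mathcal{L}$, which is precisely what condition $\left(\mathcal{L}\right)$ guarantees.

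For $\left(2\right)\Rightarrow\left(1\right)$, assuming $\mathcal{L}$ satisfies $\left(\alpha\right)$ and $\left(\mu\right)$, the strategy is to verify statement $\left(3\right)$ of Proposition~\ref{prop:(2.2.14)}, namely that $Q$ extends homomorphisms out of any $K\leq N$ with $(N,K,f,Q)\in\mathcal{L}$ and $\Omega(N)\subseteq\Omega(M)$. I would run the standard Zorn's Lemma argument from the proof of Theorem~\ref{thm:(2.1.11)}: take a maximal extension $(X,h)$, suppose some $n\in N\setminus X$ exists, invoke $\left(\alpha\right)$ to get $(N,X,h,Q)\in\mathcal{L}$ and $\left(\mu\right)$ to descend to $(R,(X:n),h_{n},Q)\in\mathcal{L}$. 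The key extra observation is that $\textup{ker}(h_{n})=(\textup{ker}(h):n)\supseteq ann_{R}(n)\in\Omega(N)\subseteq\Omega(M)$, so $\textup{ker}(h_{n})\in\Omega(M)$ and hypothesis $\left(2\right)$ applies to extend $h_{n}$ over $R$; this gives a proper extension $(X+Rn,h^{*})$, contradicting maximality. Hence $X=N$ and $Q$ is $s$-$\mathcal{L}$-$M$-injective by Proposition~\ref{prop:(2.2.14)}.

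The main obstacle is the membership bookkeeping in $\left(1\right)\Rightarrow\left(2\right)$: one must carefully transport the quadruple $(R,I,f,Q)$ to the cyclic situation $(Rm,(I)m,\dots,Q)$ and back, and it is condition $\left(\mathcal{L}\right)$ — which controls exactly the passage to $Rm$ under $ann_{R}(m)\subseteq(\textup{ker}(f):x)$ — that makes this legitimate. Getting the annihilator inclusion and the kernel identity $\textup{ker}(h_{n})=(\textup{ker}(h):n)$ aligned with the $\Omega(M)$ condition is the delicate point; the rest is the well-definedness check for $\psi$/$h^{*}$, which is routine and can be copied verbatim from Theorem~\ref{thm:(2.1.11)}.
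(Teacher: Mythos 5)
Your proposal is correct and follows essentially the same route as the paper's proof: $\left(2\right)\Leftrightarrow\left(3\right)$ via $x=g(1)$; $\left(1\right)\Rightarrow\left(2\right)$ by using condition $\left(\mathcal{L}\right)$ with $x=1$ and the $m\in M$ witnessing $\textup{ker}(f)\in\Omega(M)$ to transport $(R,I,f,Q)$ to $(Rm,Im,f_{(1,m)},Q)$, extending over $Rm$ by Proposition~\ref{prop:(2.2.14)} and pulling back along $r\mapsto rm$; and $\left(2\right)\Rightarrow\left(1\right)$ by the Zorn argument of Theorem~\ref{thm:(2.1.11)} together with the observation that $ann_{R}(n)\subseteq \textup{ker}(h_{n})$ forces $\textup{ker}(h_{n})\in\Omega(M)$. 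One small slip: your claim ``$R\cong Rm$'' is false unless $ann_{R}(m)=0$; all you need (and all you actually use) is that $r\mapsto rm$ is a surjection $R\twoheadrightarrow Rm$ and that $f$ factors through $Im$ because $ann_{R}(m)\subseteq\textup{ker}(f)$, which is exactly how the paper argues.
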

\begin{proof}$\left(2\right)\Leftrightarrow\left(3\right)$ This is obvious.

$\left(1\right)\Rightarrow\left(2\right)$ Let $M$ satisfy condition $\left(\mathcal{L}\right)$
 and let $(R,I,f,Q)\in\mathcal{L}$ with $\textup{ker}(f)\in\Omega(M)$. Thus there exists an element $m\in M$ such that $ann_{R}(m)\subseteq \textup{ker}(f).$ Since $\textup{ker}(f)=(\textup{ker}(f):1)$ where 1 is the identity element of $R$,  $ann_{R}(m)\subseteq(\textup{ker}(f):1)$.
Since  $M$ satisfies condition $\left(\mathcal{L}\right)$,  $(Rm,(I:1)m,f_{(1,m)},Q)\in\mathcal{L}$ and hence   $(Rm,Im,f_{(1,m)},Q)\in\mathcal{L}$.  Since $Q$ is $s$-$\mathcal{L}$-$M$-injective, it follows Proposition~\ref{prop:(2.2.14)} implies that there exists a homomorphism $h:Rm\rightarrow Q$ such that $h\circ i_{2}=f_{(1,m)}$, where $i_{2}$ is the inclusion mapping from $Im$ into $Rm$. Define $v_{1}:I\rightarrow Im$ by $v_{1}(a)=am$, \, $\forall a\in I$
and define $v_{2}:R\rightarrow Rm$ by $v_{2}(r)=rm$, $\forall r\in R$. It is clear that $v_{1}$ and $v_{2}$ are homomorphisms and for
all $a\in I$, we have that $(v_{2}\circ i_{1})(a)=(i_{2}\circ v_{1})(a)$, where $i_{1}$ is the inclusion mapping from $I$ into $R$.
 Define $g:R\rightarrow Q$ by $g(r)=(h\circ v_{2})(r)$,\, $\forall r\in R$. It is clear that $g$ is a homomorphism and for all $a\in I$ we
have that $(g\circ i_{1})(a)=f_{(1,m)}(v_{1}(a))=f_{(1,m)}(am)=f(a.1)=f(a)$. Thus there exists a homomorphism $g:R\rightarrow Q$ such that $(g$$\upharpoonright$$I)=f$.

 $\left(2\right)\Rightarrow\left(1\right)$ Let $\mathcal{L}$ satisfy conditions $\left(\alpha\right)$ and $\left(\mu\right)$.
Let $K\leq N\leq M$ such that $(N,K,f,Q)\in\mathcal{L}$ and let  $S=\{(C,\varphi)\mid K\leq C\leq N,\:\varphi\in \textup{Hom}_{R}(C,M)$ such that $(\varphi$$\upharpoonright$$K)=f\}.$ Define on $S$ a partial order $\preceq$ by
\[
(C_{1},\varphi_{1})\preceq(C_{2},\varphi_{2})\Longleftrightarrow C_{1}\leq C_{2}\; and\;(\varphi_{2}\upharpoonright C_{1})=\varphi_{1}\]
As in the proof of Theorem~\ref{thm:(2.1.11)}, we can prove that $S$ has a maximal element, say $(X,h)$. It suffices to show that $X=N$. Suppose that there exists $n\in N\setminus X$. It is clear that $(N,K,f,Q)\preceq(N,X,h,Q)$. Since $(N,K,f,Q)\in\mathcal{L}$ and $\mathcal{L}$ satisfies condition$\left(\alpha\right)$,   $(N,X,h,Q)\in\mathcal{L}.$ Since $\mathcal{L}$ satisfies condition $\left(\mu\right)$ and $n\in N\setminus X$, $(R,(X:n),h_{n},Q)\in\mathcal{L}$. Since $(0:n)\subseteq \textup{ker}(h_{n})$ and $n\in M$,  $\textup{ker}(h_{n})\in\Omega(M)$. By hypothesis, there exists a homomorphism
$\varphi^{*}:R\rightarrow Q$ such that $(\varphi^{*}$$\upharpoonright$$(X:n))=h_{n}$. Define $h^{*}:X+Rn\rightarrow Q$ by $h^{*}(x+rn)=h(x)+\varphi^{*}(r),\,\forall x\in X, \,\forall r\in R$. We can prove that $h^{*}$ is a well-defined homomorphism, $(X,h)\preceq(X+Rn,h^{*})$ and $(X+Rn,h^{*})\in S$. Since $n\in X+Rn$ and $n\notin X$,  $X\subsetneqq X+Rn$ and this
contradicts the  maximality of $(X,h)$ in $S$. Thus $X=N$ and this implies that there exists a homomorphism $h:N\rightarrow Q$ such that $(h$$\upharpoonright$$K)=f$. Thus $Q$ is $\mathcal{L}$-$N$-injective module, \,for all $N\leq M$ and hence $Q$ is $s$-$\mathcal{L}$-$M$-injective
$R$-module.

\end{proof}

\section{Direct Sums of $\mathcal{L}$-Injective Modules}

The direct sums of $\mathcal{L}$-injective modules is  not $\mathcal{L}$-injective in general, for example: let  $\{T_{i}\}_{i\in I}$  be a family of rings with unit and let $R=\prod_{i\in I}T_{i}$ be the ring product of the family $ \{T_{i}\}_{i\in I}$, where addition and multiplication are define componentwise. Let $A=\sqcup_{i\in I}T_{i}$ \, the  direct sum of $T_{i},\,\forall i\in I$.  If each $_{T_{i}}T_{i}$ is injective, $\forall \, i\in I$ and $I$ is infinite, then
$_{R}A$ is a direct sum of injective modules, but $_{R}A$ is not itself injective, by  \cite[p. 140]{Kas82}. Hence  we have that $_{R}A$ is a direct sum of $\mathcal{L}$-injective modules, but $_{R}A$ is not itself $\mathcal{L}$-injective where $\mathcal{L}=\mathcal{M}$.

$\vphantom{}$

In following we study conditions under which the class of $\mathcal{L}$-injective modules is closed under direct sums.

$\vphantom{}$

Let $\{E_{\alpha}\}_{\alpha\in A}$ be a family of modules and let $E=\bigoplus_{\alpha\in A}E_{\alpha}.$ For any $x=(x_{\alpha})_{\alpha\in A}\in E,$ we define the the support of $x$ to be the set $\{\alpha\in A$$\mid x_{\alpha}\neq0\}$ and denote it by $\textup{supp}(x).$ For any $X\subseteq E,$ we define  $\textup{supp}(X)$ to be the set $\underset{x\in X}{\bigcup}\textup{supp}(x)=\{\alpha\in A\mid(\exists \,x\in X)\,x_{\alpha}\neq0\}.$

$\vphantom{}$

The following condition will be useful later.

$\vphantom{}$

$\left(F\right)$: Let $\{E_{\alpha}\}_{\alpha\in A}$ be a family of modules, where $A$ is an infinite index set and let $\mathcal{L}$
be a subclass of $\M.$ We say that $\mathcal{L}$ satisfies condition $\left(F\right)$ for a family $\{E_{\alpha}\}_{\alpha\in A},$ if for any
$(R,I,f,\bigoplus_{\alpha\in A}E_{\alpha})\in\mathcal{L},$ then $\textup{supp}(\textup{im}(f))$ is finite.

\begin{lem}\label{lem:(2.3.1)} Let $A$ be any index set and let $C$ be any countable subset of $A,$ and let $\{E_{\alpha}\}_{\alpha\in A}$ be any family of modules. Define $\pi_{C}:\bigoplus_{\alpha\in A}E_{\alpha}\rightarrow\bigoplus_{\alpha\in C}E_{\alpha}$ by  $\pi_{C}(x)=x_{C}$,   \,for all $x\in\bigoplus_{\alpha\in A}E_{\alpha}$ where $\pi_{\alpha}(x_{C})=\pi_{\alpha}(\pi_{C}(x))=\begin{cases}
\begin{array}{cc} \pi_{\alpha}(x) &  if\;\alpha\in C\\ 0 &  if\;\alpha\notin C\end{array},\end{cases} \,\forall\alpha\in A,$
where  $\pi_{\alpha}$ is the $\alpha th$ projection  homomorphism. Then $\pi_{C}$ is a well-defined homomorphism and if $x\in\bigoplus_{\alpha\in C}E_{\alpha},$ then $\pi_{C}(x)=x.$
\end{lem}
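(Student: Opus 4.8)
The plan is to treat $\pi_C$ as a componentwise construction and verify the three required properties in turn: that its output genuinely lands in $\bigoplus_{\alpha\in C}E_{\alpha}$, that it respects addition and scalar multiplication, and that it fixes every element of $\bigoplus_{\alpha\in C}E_{\alpha}$.

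First I would establish that $\pi_C$ is well-defined as a map into $\bigoplus_{\alpha\in C}E_{\alpha}$. Fix $x\in\bigoplus_{\alpha\in A}E_{\alpha}$; by definition of the direct sum, $\textup{supp}(x)$ is finite. The prescription for $\pi_C(x)=x_C$ sets the $\alpha$th coordinate equal to $\pi_{\alpha}(x)$ when $\alpha\in C$ and to $0$ otherwise, so $\textup{supp}(x_C)=\textup{supp}(x)\cap C$, which is finite. Hence $x_C$ has finite support and its nonzero coordinates are indexed by $C$, so $x_C\in\bigoplus_{\alpha\in C}E_{\alpha}$. Since the value of each coordinate $\pi_{\alpha}(x_C)$ is uniquely determined by $x$ through the displayed case distinction, $\pi_C$ is a well-defined function.

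Next I would check $R$-linearity, which follows because all the module operations on a direct sum are performed coordinatewise. For $x,y\in\bigoplus_{\alpha\in A}E_{\alpha}$ and $r\in R$, I compare coordinates: for $\alpha\in C$ one has $\pi_{\alpha}(\pi_C(x+y))=\pi_{\alpha}(x+y)=\pi_{\alpha}(x)+\pi_{\alpha}(y)=\pi_{\alpha}(\pi_C(x))+\pi_{\alpha}(\pi_C(y))$, while for $\alpha\notin C$ both sides are $0$; the same bookkeeping with $\pi_{\alpha}(\pi_C(rx))=r\,\pi_{\alpha}(\pi_C(x))$ handles scalar multiplication. Since two elements of a direct sum coincide exactly when all their coordinates agree, this yields $\pi_C(x+y)=\pi_C(x)+\pi_C(y)$ and $\pi_C(rx)=r\,\pi_C(x)$, so $\pi_C$ is an $R$-homomorphism.

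Finally, for the restriction claim, I would take $x\in\bigoplus_{\alpha\in C}E_{\alpha}$, regarded inside $\bigoplus_{\alpha\in A}E_{\alpha}$ via the natural inclusion that extends by zero; then $\pi_{\alpha}(x)=0$ for every $\alpha\notin C$. Comparing coordinates shows $\pi_{\alpha}(\pi_C(x))=\pi_{\alpha}(x)$ for $\alpha\in C$ and $\pi_{\alpha}(\pi_C(x))=0=\pi_{\alpha}(x)$ for $\alpha\notin C$, so $\pi_C(x)=x$. This lemma is essentially bookkeeping, so I do not expect any genuine obstacle; the only point deserving care is the identification of $\bigoplus_{\alpha\in C}E_{\alpha}$ with the submodule of $\bigoplus_{\alpha\in A}E_{\alpha}$ consisting of those elements supported in $C$, which makes the last statement meaningful.
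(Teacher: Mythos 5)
Your proof is correct and fills in exactly the routine coordinatewise verification that the paper dismisses with ``An easy check'': finite support of $x_C$, componentwise $R$-linearity, and the identification of $\bigoplus_{\alpha\in C}E_{\alpha}$ with the submodule of elements supported in $C$. There is nothing to add or correct.
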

\begin{proof}An easy check.
\end{proof}

\begin{lem}\label{lem:(2.1.3)} Let $\{M_{i}\}_{i\in I}$ be any family of modules. If  $M_{i}$ is $\mathcal{L}$-injective,   $\forall \,i\in I$ and $\mathcal{L}$ satisfies condition $\left(\lambda\right)$,  then $\prod_{i\in I}M_{i}$ is $\mathcal{L}$-injective.
\end{lem}
\begin{proof} This is obvious.
\end{proof}

 The following corollary is immediately from Lemma~\ref{lem:(2.1.3)}.

\begin{cor}\label{cor:(2.1.5)} Let $\mathcal{L}$  satisfy condition $\left(\lambda\right)$ and let  $\{M_{i}\}_{i\in I}$ be any family of $\,$$\mathcal{L}$-injective modules.  If $I$ is a finite set, then $\bigoplus_{i\in I}M_{i}$
is $\mathcal{L}$-injective.
 \end{cor}

\begin{lem}\label{lem:(2.3.2)} Let $\mathcal{L}$ satisfy the conditions $\left(\alpha\right),\left(\mu\right)$ and $\left(\delta\right)$  and let $\{E_{\alpha}\}_{\alpha\in A}$ be any family of $\mathcal{L}$-injective modules, where $A$ is an infinite index set. If $\mathcal{L}$ satisfies condition $\left(F\right)$ for a family $\{E_{\alpha}\}_{\alpha\in A}$, then $\bigoplus_{\alpha\in A}E_{\alpha}$ is an $\mathcal{L}$-injective module.
\end{lem}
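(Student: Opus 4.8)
The plan is to reduce to the Generalized Baer's Criterion (Theorem~\ref{thm:(2.1.11)}) and then construct the required test element of $E:=\bigoplus_{\alpha\in A}E_{\alpha}$ one component at a time. Since $\mathcal{L}$ satisfies $\left(\alpha\right)$ and $\left(\mu\right)$, Theorem~\ref{thm:(2.1.11)} is available, so it suffices to show that for every $(R,I,f,E)\in\mathcal{L}$ there exists an element $m\in E$ with $f(r)=rm$ for all $r\in I$. So I would fix such a quadruple $(R,I,f,E)\in\mathcal{L}$ and aim to produce $m$.

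The first substantive step uses condition $\left(F\right)$: the set $S:=\textup{supp}(\textup{im}(f))$ is finite, and by the definition of support this says exactly that $\textup{im}(f)\subseteq\bigoplus_{\alpha\in S}E_{\alpha}$, equivalently that $\pi_{\alpha}(f(r))=0$ for all $r\in I$ and all $\alpha\in A\setminus S$. For each $\alpha\in S$ I would consider the canonical projection $\pi_{\alpha}:E\to E_{\alpha}$ and apply condition $\left(\delta\right)$ to $(R,I,f,E)\in\mathcal{L}$ together with $\pi_{\alpha}$, obtaining $(R,I,\pi_{\alpha}f,E_{\alpha})\in\mathcal{L}$. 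Because each $E_{\alpha}$ is $\mathcal{L}$-injective and $\mathcal{L}$ satisfies $\left(\alpha\right)$ and $\left(\mu\right)$, the Generalized Baer's Criterion then yields an element $m_{\alpha}\in E_{\alpha}$ with $\pi_{\alpha}(f(r))=rm_{\alpha}$ for all $r\in I$.

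Next I would assemble these pieces: define $m\in E$ by declaring its $\alpha$-component to be $m_{\alpha}$ for $\alpha\in S$ and $0$ for $\alpha\in A\setminus S$. Since $S$ is finite, $m$ has finite support and so genuinely lies in the direct sum $E$ (not merely the product). To verify $f(r)=rm$ for each $r\in I$ I would compare components: for $\alpha\in S$ we get $\pi_{\alpha}(rm)=rm_{\alpha}=\pi_{\alpha}(f(r))$, while for $\alpha\notin S$ both $\pi_{\alpha}(rm)=0$ and $\pi_{\alpha}(f(r))=0$. Hence $f(r)=rm$ for all $r\in I$, and Theorem~\ref{thm:(2.1.11)} lets me conclude that $E$ is $\mathcal{L}$-injective.

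The argument is essentially routine once these reductions are set up; the single point that needs care is the passage from the global map $f$ to its components, and this is exactly what conditions $\left(\delta\right)$ and $\left(F\right)$ are for. Condition $\left(\delta\right)$ legitimizes treating each $\pi_{\alpha}f$ as a member of $\mathcal{L}$, so that Baer's criterion can be applied componentwise, while condition $\left(F\right)$ serves only to guarantee that finitely many $m_{\alpha}$ are nonzero, so that the reassembled element $m$ really belongs to $\bigoplus_{\alpha\in A}E_{\alpha}$. As an alternative to the componentwise construction one could first note that $\left(\delta\right)$ implies $\left(\lambda\right)$, deduce from Corollary~\ref{cor:(2.1.5)} that the finite direct sum $\bigoplus_{\alpha\in S}E_{\alpha}$ is $\mathcal{L}$-injective, and corestrict $f$ there; but the componentwise version is shorter and avoids invoking the finite-sum result.
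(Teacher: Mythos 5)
Your proof is correct. It follows the same overall strategy as the paper --- reduce to the Generalized Baer's Criterion (Theorem~\ref{thm:(2.1.11)}), use condition $\left(F\right)$ to confine $\textup{im}(f)$ to finitely many components, and use $\left(\delta\right)$ to push the composites with projections back into $\mathcal{L}$ --- but it differs in one genuine respect: the paper projects $f$ onto the whole finite block $\bigoplus_{\alpha\in F}E_{\alpha}$ via $\pi_{F}$, invokes Corollary~\ref{cor:(2.1.5)} (finite direct sums of $\mathcal{L}$-injectives are $\mathcal{L}$-injective, using that $\left(\delta\right)$ implies $\left(\lambda\right)$) to extend there, and then includes back into the full direct sum, whereas you project onto each individual $E_{\alpha}$, extract a Baer element $m_{\alpha}$ from each, and reassemble them by hand. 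Your componentwise route avoids the finite-direct-sum corollary altogether and makes the verification a transparent coordinate check; the paper's route is slightly more structural but needs the extra intermediate result. The one small redundancy in your write-up is that conditions $\left(\alpha\right)$ and $\left(\mu\right)$ are not needed to extract $m_{\alpha}$ from the $\mathcal{L}$-injectivity of $E_{\alpha}$ (the implication $\left(1\right)\Rightarrow\left(3\right)$ in Theorem~\ref{thm:(2.1.11)} is unconditional); they are only needed at the very end, to conclude $\mathcal{L}$-injectivity of the direct sum from condition $\left(3\right)$, which you do correctly.
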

\begin{proof}   Suppose that $\mathcal{L}$ satisfies condition $\left(F\right)$  for
the family $\{E_{\alpha}\}_{\alpha\in A}$ and let $(R,I,f,\bigoplus_{\alpha\in A}E_{\alpha})\in\mathcal{L}.$ Thus $\textup{supp}(\textup{im}(f))$ is finite and this implies that $f(I)\subseteq\bigoplus_{\alpha\in F}E_{\alpha},$ where $F$ is a finite subset of $A.$ Since $E_{\alpha}$ is $\mathcal{L}$-injective, \,$\forall\alpha\in F,$ it follows Corollary~\ref{cor:(2.1.5)} implies that $\bigoplus_{\alpha\in F}E_{\alpha}$ is $\mathcal{L}$-injective.  Define $\pi_{F}:\bigoplus_{\alpha\in A}E_{\alpha}\rightarrow\bigoplus_{\alpha\in F}E_{\alpha}$ by \, $\pi_{F}(x)=x_{F},$ \, for all $x\in\bigoplus_{\alpha\in A}E_{\alpha}$, where $\pi_{\alpha}(x_{F})=\pi_{\alpha}(\pi_{F}(x))=\begin{cases}
\begin{array}{cc}
\pi_{\alpha}(x) &  if\;\alpha\in F\\
0 &  if\;\alpha\notin F\end{array},\end{cases}\,\forall\alpha\in A,$ \, where $\pi_{\alpha}$  is
the $\alpha$th projection homomorphism.  By Lemma~\ref{lem:(2.3.1)}, we have that $\pi_{F}$ is a well-defined homomorphism. Since $(R,I,f,\bigoplus_{\alpha\in A}E_{\alpha})\in\mathcal{L}$ and $\mathcal{L}$ satisfies condition $\left(\delta\right)$,
$(R,I,\pi_{F}\circ f,\bigoplus_{\alpha\in F}E_{\alpha})\in\mathcal{L}.$
 By $\mathcal{L}$-injectivity of $\bigoplus_{\alpha\in F}E_{\alpha},$
there exists a homomorphism $g:R\rightarrow\bigoplus_{\alpha\in F}E_{\alpha}$ such that $g(a)=(\pi_{F}\circ f)(a),$ $\forall a\in I.$ Put $g^{\prime}=i_{1}\circ g:R\rightarrow$$\bigoplus_{\alpha\in A}E_{\alpha},$ where $i_{1}:\bigoplus_{\alpha\in F}E_{\alpha}\rightarrow\bigoplus_{\alpha\in A}E_{\alpha}$ is the inclusion homomorphism. Then for each $a\in I,$ we have that
$g^{\prime}(a)=\pi_{F}(f(a)).$ Since $f(I)\subseteq\bigoplus_{\alpha\in F}E_{\alpha},$ $f(a)\in\bigoplus_{\alpha\in F}E_{\alpha},$  \,$\forall a\in I.$
Thus by Lemma~\ref{lem:(2.3.1)} we have that $\pi_{F}(f(a))=f(a),\,\forall a\in I$
and hence $g^{\prime}(a)=f(a),\,\forall a\in I.$ Since
$\mathcal{L}$ satisfies conditions $\left(\alpha\right)$ and $\left(\mu\right)$, it follows from Theorem~\ref{thm:(2.1.11)} that $\bigoplus_{\alpha\in A}E_{\alpha}$ is $\mathcal{L}$-injective.
\end{proof}

 The following proposition generalizes Proposition 8.13 in \cite[p. 83]{Gol86}.

\begin{prop}\label{prop:(2.3.3)} Let $\mathcal{L}$ satisfy conditions $\left(\alpha\right),\, \left(\mu\right)$
and $\left(\delta\right)$  and let $\{E_{\alpha}\}_{\alpha\in A}$
be any family of $\mathcal{L}$-injective modules, where $A$ is an infinite index set. If
$\bigoplus_{\alpha\in C}E_{\alpha}$ is an $\mathcal{L}$-injective module for any countable subset $C$ of $A,$  then $\bigoplus_{\alpha\in A}E_{\alpha}$ is an $\mathcal{L}$-injective module.
\end{prop}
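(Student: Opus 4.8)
The plan is to reduce the claim to condition $\left(F\right)$ and then invoke Lemma~\ref{lem:(2.3.2)}. Since $\mathcal{L}$ satisfies $\left(\alpha\right)$, $\left(\mu\right)$ and $\left(\delta\right)$, Lemma~\ref{lem:(2.3.2)} tells us that $\bigoplus_{\alpha\in A}E_{\alpha}$ is $\mathcal{L}$-injective as soon as we verify that $\mathcal{L}$ satisfies condition $\left(F\right)$ for the family $\{E_{\alpha}\}_{\alpha\in A}$, i.e. that $\textup{supp}(\textup{im}(f))$ is finite for every $(R,I,f,\bigoplus_{\alpha\in A}E_{\alpha})\in\mathcal{L}$. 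Thus the entire proof amounts to establishing this finiteness, and the countable-sum hypothesis is exactly what I would feed into it.

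To prove finiteness I would argue by contradiction. Suppose $(R,I,f,\bigoplus_{\alpha\in A}E_{\alpha})\in\mathcal{L}$ but $\textup{supp}(\textup{im}(f))$ is infinite, and pick a countably infinite subset $C=\{\alpha_{1},\alpha_{2},\dots\}\subseteq\textup{supp}(\textup{im}(f))$. For each $n$ choose $a_{n}\in I$ with $\pi_{\alpha_{n}}(f(a_{n}))\neq0$. Using the projection $\pi_{C}:\bigoplus_{\alpha\in A}E_{\alpha}\rightarrow\bigoplus_{\alpha\in C}E_{\alpha}$ supplied by Lemma~\ref{lem:(2.3.1)}, set $g=\pi_{C}\circ f$. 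Condition $\left(\delta\right)$ then gives $(R,I,g,\bigoplus_{\alpha\in C}E_{\alpha})\in\mathcal{L}$, and since $\bigoplus_{\alpha\in C}E_{\alpha}$ is $\mathcal{L}$-injective by hypothesis (as $C$ is countable), the diagram with the inclusion $I\hookrightarrow R$ completes: there is a homomorphism $h:R\rightarrow\bigoplus_{\alpha\in C}E_{\alpha}$ with $h(a)=g(a)=\pi_{C}(f(a))$ for all $a\in I$.

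The contradiction then comes from the finite support of a single element. Since $h(1)\in\bigoplus_{\alpha\in C}E_{\alpha}$, its support is contained in $\{\alpha_{1},\dots,\alpha_{k}\}$ for some $k$. Because $h$ is a left $R$-homomorphism, $h(a)=h(a\cdot1)=a\,h(1)$ for every $a\in I$, and multiplication by $a$ acts coordinatewise, so $\textup{supp}(h(a))\subseteq\textup{supp}(h(1))\subseteq\{\alpha_{1},\dots,\alpha_{k}\}$. In particular $\pi_{\alpha_{k+1}}(h(a))=0$ for all $a\in I$. On the other hand, for indices in $C$ we have $\pi_{\alpha_{k+1}}(h(a))=\pi_{\alpha_{k+1}}(\pi_{C}(f(a)))=\pi_{\alpha_{k+1}}(f(a))$, and taking $a=a_{k+1}$ forces $\pi_{\alpha_{k+1}}(f(a_{k+1}))\neq0$, a contradiction. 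Hence $\textup{supp}(\textup{im}(f))$ is finite, and Lemma~\ref{lem:(2.3.2)} finishes the proof.

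I expect the main obstacle to be the bookkeeping in this last step: one must be sure that the extension $h$ is forced to have finite support and that the identity $h(a)=a\,h(1)$ transfers this finiteness \emph{uniformly} across all $a\in I$, which is what makes a single fixed bound $k$ available to defeat the infinitely many coordinates chosen in $C$. Everything else---the reduction to condition $\left(F\right)$ via Lemma~\ref{lem:(2.3.2)}, the stability of membership in $\mathcal{L}$ under $\left(\delta\right)$, and the well-definedness of $\pi_{C}$ via Lemma~\ref{lem:(2.3.1)}---is routine once this core point is isolated.
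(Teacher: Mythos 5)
Your proposal is correct and follows essentially the same route as the paper: reduce to verifying condition $\left(F\right)$ so that Lemma~\ref{lem:(2.3.2)} applies, project via $\pi_{C}$ onto a countably infinite subset $C$ of $\textup{supp}(\textup{im}(f))$, use condition $\left(\delta\right)$ and the countable-sum hypothesis to extend $\pi_{C}\circ f$ to $R$, and derive the contradiction from the finite support of the single element $h(1)$ (the paper's element $y$ with $f(a)=ay$ is exactly your $h(1)$, obtained via Theorem~\ref{thm:(2.1.11)}(3) rather than by evaluating the extension at $1$). The only cosmetic difference is that the paper shows $C\subseteq\textup{supp}(y)$ directly while you exhibit one index of $C$ outside $\textup{supp}(h(1))$; these are the same contradiction.
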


\begin{proof}
Let $\pi_{\beta}:\bigoplus_{\alpha\in A}E_{\alpha}\rightarrow E_{\beta}$
be the natural projection homomorphism. Assume that $\bigoplus_{\alpha\in A}E_{\alpha}$ is not $\mathcal{L}$-injective, thus by Lemma~\ref{lem:(2.3.2)} there exists $(R,I,f,\bigoplus_{\alpha\in A}E_{\alpha})\in\mathcal{L}$ such that $\textup{supp}(\textup{im}(f))$ is  infinite.
 Since $\textup{supp}(\textup{im}(f))$ is an infinite set,  $\textup{supp}(\textup{im}(f))$ contains a countable infinite subset,  say  $C.$  For  any  $\alpha\in C,$ then $\alpha\in \textup{supp}(\textup{im}(f))$ and this implies that there exists $x\in \textup{im}(f)$ such that
 $x_{\alpha}\neq 0.$  Thus for any $\alpha\in C,$ then $\pi_{\alpha}(\textup{im}(f))\neq 0.$
 Define $\pi_{C}:\bigoplus_{\alpha\in A}E_{\alpha}\rightarrow\bigoplus_{\alpha\in C}E_{\alpha}$
as in  Lemma~\ref{lem:(2.3.1)}.  Note that $C=\textup{supp}(\textup{im}(\pi_{C}\circ f)).$
 Since $(R,I,f,\bigoplus_{\alpha\in A}E_{\alpha})\in\mathcal{L}$ and $\mathcal{L}$ satisfies condition $\left(\gamma\right)$,
$(R,I,\pi_{C}\circ f,\bigoplus_{\alpha\in C}E_{\alpha})\in\mathcal{L}.$ Since $C$ is a countable subset of $A,$ it follows from  the hypothesis
that $\bigoplus_{\alpha\in C}E_{\alpha}$ is $\mathcal{L}$-injective. By Theorem~\ref{thm:(2.1.11)},  there exists an element $y\in\bigoplus_{\alpha\in C}E_{\alpha}$ such that $(\pi_{C}\circ f)(a)=ay,\,\forall a\in I.$  Let $\alpha\in \textup{supp}(\textup{im}(\pi_{C}\circ f)),$
 thus there is $r\in I$ such that $\pi_{\alpha}((\pi_{C}\circ f)(r))\neq 0$. Hence $\pi_{\alpha}(ry)\neq 0$ and this implies that $\pi_{\alpha}(y)\neq 0.$ Thus $\alpha\in \textup{supp}(y)$ and hence $\textup{supp}(\textup{im}(\pi_{C}\circ f))\subseteq \textup{supp}(y).$
 Since $C=\textup{supp}(\textup{im}(\pi_{C}\circ f)),$  $C\subseteq \textup{supp}(y)$ and
this a contradiction, since $\textup{supp}(y)$ is finite (because $y\in\bigoplus_{\alpha\in C}E_{\alpha}$)
and $C$ is infinite. Thus $\bigoplus_{\alpha\in A}E_{\alpha}$ is an $\mathcal{L}$-injective module.
\end{proof}

 By Proposition~\ref{prop:(2.3.3)} and Lemma~\ref{lem:(2.1.9)} we can prove the following
corollary.

\begin{cor}\label{cor:(2.3.4)} Let $\rho_{1}$ and $\rho_{2}$ be any two $P$-filters and let
$\{E_{\alpha}\}_{\alpha\in A}$ be any family of modules, where $A$ is an infinite index set. If $\bigoplus_{\alpha\in C}E_{\alpha}$
is an $\mathcal{L}_{(\rho_{1},\rho_{2})}$-injective module for any countable subset $C$ of $A,$  then $\bigoplus_{\alpha\in A}E_{\alpha}$ is an $\mathcal{L}_{(\rho_{1},\rho_{2})}$-injective module.
\end{cor}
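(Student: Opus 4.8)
The plan is to treat Corollary~\ref{cor:(2.3.4)} as nothing more than the specialization of Proposition~\ref{prop:(2.3.3)} to the concrete subclass $\mathcal{L}=\mathcal{L}_{(\rho_{1},\rho_{2})}$. So the work reduces to checking that this subclass meets the structural hypotheses of that proposition and that the given family $\{E_{\alpha}\}_{\alpha\in A}$ consists of $\mathcal{L}_{(\rho_{1},\rho_{2})}$-injective modules; once both are in place, the conclusion is immediate.

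First I would invoke Lemma~\ref{lem:(2.1.9)}, which states exactly that for any two $P$-filters $\rho_{1},\rho_{2}$ the subclass $\mathcal{L}_{(\rho_{1},\rho_{2})}$ satisfies conditions $\left(\alpha\right)$, $\left(\delta\right)$ and $\left(\mu\right)$. These are precisely the three conditions demanded in the hypothesis of Proposition~\ref{prop:(2.3.3)}, so this step disposes of the structural requirements with no further computation.

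Next I must supply the remaining hypothesis of Proposition~\ref{prop:(2.3.3)}, namely that each $E_{\alpha}$ is itself $\mathcal{L}_{(\rho_{1},\rho_{2})}$-injective. Here the key observation is that every singleton $\{\alpha\}\subseteq A$ is a countable subset, so the standing assumption that $\bigoplus_{\alpha\in C}E_{\alpha}$ is $\mathcal{L}_{(\rho_{1},\rho_{2})}$-injective for every countable $C$, applied to $C=\{\alpha\}$, already yields that $E_{\alpha}$ is $\mathcal{L}_{(\rho_{1},\rho_{2})}$-injective for each $\alpha\in A$. Thus $\{E_{\alpha}\}_{\alpha\in A}$ is a family of $\mathcal{L}_{(\rho_{1},\rho_{2})}$-injective modules, as required.

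With both hypotheses verified for $\mathcal{L}=\mathcal{L}_{(\rho_{1},\rho_{2})}$, I would apply Proposition~\ref{prop:(2.3.3)} directly to conclude that $\bigoplus_{\alpha\in A}E_{\alpha}$ is $\mathcal{L}_{(\rho_{1},\rho_{2})}$-injective. There is no real obstacle in this argument; the only point meriting a moment's care is the reduction in the previous paragraph, which shows that the countable-sum hypothesis by itself already forces each summand to be $\mathcal{L}_{(\rho_{1},\rho_{2})}$-injective. In particular, one need not pass through closure under direct summands (Corollary~\ref{cor:(2.1.2)}), which would additionally require conditions $\left(\beta\right)$ and $\left(\gamma\right)$ not furnished by Lemma~\ref{lem:(2.1.9)}.
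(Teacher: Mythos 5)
Your proposal is correct and is essentially the paper's own proof, which simply cites Lemma~\ref{lem:(2.1.9)} (to get conditions $(\alpha)$, $(\delta)$, $(\mu)$ for $\mathcal{L}_{(\rho_{1},\rho_{2})}$) followed by Proposition~\ref{prop:(2.3.3)}. Your extra remark that taking $C=\{\alpha\}$ supplies the $\mathcal{L}_{(\rho_{1},\rho_{2})}$-injectivity of each summand is a sensible bit of added care, since the corollary's hypothesis (unlike the proposition's) does not assume it outright.
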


 We can now state the following result, found in \cite[Proposition 8.13, p. 83]{Gol86} as a corollary.

\begin{cor}\label{cor:(2.3.5)} Let $\{E_{\alpha}\}_{\alpha\in A}$ be any family of $\tau$-injective modules, where $A$ is an infinite index set. If $\bigoplus_{\alpha\in C}E_{\alpha}$ is a $\tau$-injective module for any countable subset $C$ of $A,$ then $\bigoplus_{\alpha\in A}E_{\alpha}$ is a $\tau$-injective module.
\end{cor}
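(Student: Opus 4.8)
The plan is to obtain this statement as a direct specialization of Corollary~\ref{cor:(2.3.4)} (equivalently, of Proposition~\ref{prop:(2.3.3)}) to the subclass $\mathcal{L}_{\tau}$ attached to the hereditary torsion theory $\tau$. The argument rests entirely on two translation facts already established in the excerpt: that $\tau$-injectivity coincides with $\mathcal{L}_{\tau}$-injectivity, and that $\mathcal{L}_{\tau}$ arises as $\mathcal{L}_{(\rho_{\tau},\rho_{\infty})}$ for a suitable pair of $P$-filters.

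First I would use Lemma~\ref{lem:(2.2.8)}(1), which identifies $\tau$-injective modules with $\mathcal{L}_{\tau}$-injective modules. Applying this simultaneously to each summand $E_{\alpha}$, to every countable partial sum $\bigoplus_{\alpha\in C}E_{\alpha}$, and to the total sum $\bigoplus_{\alpha\in A}E_{\alpha}$, the hypotheses and the desired conclusion are rephrased verbatim in terms of $\mathcal{L}_{\tau}$-injectivity. Thus it suffices to prove the closure-under-direct-sums statement for the subclass $\mathcal{L}_{\tau}$.

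Next I would invoke the identity $\mathcal{L}_{\tau}=\mathcal{L}_{(\rho_{\tau},\rho_{\infty})}$, which holds precisely because $\tau$ is hereditary, together with the fact that $\rho_{\tau}$ and $\rho_{\infty}$ are both $P$-filters (Example~\ref{example:(2.1.8)}, parts (3) and (2)). This places the problem squarely within the hypotheses of Corollary~\ref{cor:(2.3.4)}, which is the $P$-filter packaging of Proposition~\ref{prop:(2.3.3)}: setting $\rho_{1}=\rho_{\tau}$ and $\rho_{2}=\rho_{\infty}$ yields at once that $\bigoplus_{\alpha\in A}E_{\alpha}$ is $\mathcal{L}_{(\rho_{\tau},\rho_{\infty})}$-injective, hence $\mathcal{L}_{\tau}$-injective. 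Reading this back through Lemma~\ref{lem:(2.2.8)}(1) gives that $\bigoplus_{\alpha\in A}E_{\alpha}$ is $\tau$-injective, as required.

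I do not anticipate a genuine obstacle: all of the substantive work lives in Proposition~\ref{prop:(2.3.3)} and in Lemma~\ref{lem:(2.1.9)} (which supplies conditions $(\alpha)$, $(\mu)$ and $(\delta)$ for any $\mathcal{L}_{(\rho_{1},\rho_{2})}$), and this corollary only requires checking that the two chosen classes are $P$-filters and that the hereditary hypothesis on $\tau$ justifies the equality $\mathcal{L}_{\tau}=\mathcal{L}_{(\rho_{\tau},\rho_{\infty})}$. The one point I would state carefully is that the countable-index hypothesis transfers correctly under the $\tau$-injective/$\mathcal{L}_{\tau}$-injective dictionary, but this is immediate since Lemma~\ref{lem:(2.2.8)}(1) is an equivalence valid for every module.
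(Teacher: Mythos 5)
Your proposal is correct and follows essentially the same route as the paper: the paper's own proof is precisely ``take the two $P$-filters $\rho_{1}=\rho_{\tau}$ and $\rho_{2}=\Re$ (that is, $\rho_{\infty}$) and apply Corollary~\ref{cor:(2.3.4)}.'' You merely make explicit the translation steps (Lemma~\ref{lem:(2.2.8)}(1) and the identity $\mathcal{L}_{\tau}=\mathcal{L}_{(\rho_{\tau},\rho_{\infty})}$ for hereditary $\tau$) that the paper leaves implicit.
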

\begin{proof}
By taking the two $P$-filters $\rho_{1}=\rho_{\tau}$ and $\rho_{2}=\Re$ and applying Corollary~\ref{cor:(2.3.4)}.
\end{proof}

 Since the class of $\mathcal{L}$-injective modules is closed under isomorphism, when $\mathcal{L}$ satisfies $\left(\gamma\right)$, it follows from Proposition~\ref{prop:(2.3.3)}   we have  the following corollary.

\begin{cor}\label{cor:(2.3.6)} Consider the following three conditions, where $\K$ is a nonempty class of $R$-modules.

$\left(1\right)$ Every direct sum  of  $\mathcal{L}$-injective $R$-modules in $\K$ is $\mathcal{L}$-injective.

$\left(2\right)$ Every countable direct sum of  $\mathcal{L}$-injective $R$-modules in $\K$ is $\mathcal{L}$-injective.

$\left(3\right)$ For any family  $\{E_{i}\}_{i\in N}$ of $\mathcal{L}$-injective $R$-modules in $\K,$   then $\bigoplus_{i\in N}E_{i}$ is $\mathcal{L}$-injective.

Then $\left(1\right)$ implies $\left(2\right)$ and $\left(2\right)$ implies $\left(3\right),$ and if $\mathcal{L}$ satisfies  conditions $\left(\alpha\right),\,\,\left(\mu\right)$ and $\left(\delta\right)$, then $\left(2\right)$ implies $\left(1\right).$ Moreover, if $\mathcal{L}$ satisfies condition $\left(\gamma\right)$, then $\left(3\right)$ implies $\left(2\right).$
\end{cor}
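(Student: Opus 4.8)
The plan is to close the cycle of implications by matching each one to a result already established, so that the corollary becomes an assembly of Proposition~\ref{prop:(2.3.3)} and Remark~\ref{rem:(1.2.7)} rather than a fresh argument. The two implications $(1)\Rightarrow(2)$ and $(2)\Rightarrow(3)$ carry no real content: a countable direct sum is a particular case of an arbitrary direct sum, and the index set $N$ is itself countable, so each is obtained simply by specializing the hypothesis to a sub-family of $\mathcal{L}$-injective modules drawn from $\K$. I would dispose of both in one line, noting that a subfamily of modules from $\K$ is again a family of $\mathcal{L}$-injective modules in $\K$.

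The substantive step is $(2)\Rightarrow(1)$, and here the key is Proposition~\ref{prop:(2.3.3)}. Given an arbitrary family $\{E_{\alpha}\}_{\alpha\in A}$ of $\mathcal{L}$-injective modules in $\K$, I would split on the cardinality of $A$. If $A$ is countable the conclusion is immediate from $(2)$. If $A$ is infinite, I would observe that for every countable subset $C\subseteq A$ the subfamily $\{E_{\alpha}\}_{\alpha\in C}$ still consists of $\mathcal{L}$-injective modules lying in $\K$, so applying $(2)$ to it shows that $\bigoplus_{\alpha\in C}E_{\alpha}$ is $\mathcal{L}$-injective. Since $\mathcal{L}$ satisfies $\left(\alpha\right)$, $\left(\mu\right)$ and $\left(\delta\right)$---precisely the hypotheses of Proposition~\ref{prop:(2.3.3)}---that proposition then yields that $\bigoplus_{\alpha\in A}E_{\alpha}$ is $\mathcal{L}$-injective, which is exactly $(1)$.

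For $(3)\Rightarrow(2)$ I would use isomorphism-invariance. Given a countably infinite family $\{E_{i}\}_{i\in I}$ of $\mathcal{L}$-injective modules in $\K$, I fix a bijection $\sigma\colon N\to I$; then $\{E_{\sigma(j)}\}_{j\in N}$ is an $N$-indexed family of $\mathcal{L}$-injective modules in $\K$, so $(3)$ makes $\bigoplus_{j\in N}E_{\sigma(j)}$ $\mathcal{L}$-injective, and since this module is isomorphic to $\bigoplus_{i\in I}E_{i}$, Remark~\ref{rem:(1.2.7)} (which is where $\left(\gamma\right)$ enters) transports $\mathcal{L}$-injectivity across the isomorphism. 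The case of a finite index set poses no difficulty and I would treat it as elementary.

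The main obstacle is not any single hard computation---everything is pre-built---but rather the bookkeeping of keeping the modules inside $\K$ while passing to sub-families and re-indexings, and of matching the exact condition set ($\left(\alpha\right),\left(\mu\right),\left(\delta\right)$ for $(2)\Rightarrow(1)$ versus $\left(\gamma\right)$ for $(3)\Rightarrow(2)$) to the hypotheses of the cited results. The one point that genuinely requires care is verifying, in the $(2)\Rightarrow(1)$ step, that the ``any countable subset'' hypothesis demanded by Proposition~\ref{prop:(2.3.3)} is supplied in full generality by $(2)$; this holds precisely because every countable subfamily of modules from $\K$ is again a countable family of $\mathcal{L}$-injective modules in $\K$, so no extra closure property of $\K$ is needed.
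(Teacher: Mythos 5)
Your proposal is correct and matches the paper's own (very terse) proof exactly: the paper simply cites Proposition~\ref{prop:(2.3.3)} for $(2)\Rightarrow(1)$ and Remark~\ref{rem:(1.2.7)} for $(3)\Rightarrow(2)$, which is precisely the assembly you carry out, with the remaining implications being trivial specializations. The only point you wave at (''the finite index set poses no difficulty'') is also left implicit by the paper, so there is nothing to add.
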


\begin{defn}\label{defn:(2.3.7)} A submodule $N$ of a module $M$ is said to be $strongly\mathcal{L}$-dense in $M$ (shortly, $s$-$\mathcal{L}$-dense), if $(M,N,I_{N},N)\in\mathcal{L},$ where $I_{N}$ is the identity homomorphism from $N$ into $N.$
\end{defn}

The following lemmas are clear.

\begin{lem}\label{lem:(2.3.8)}  If $N\leq K\leq M$ are modules such that $N$ is $s$-$\mathcal{L}$-dense in $M$ and $\mathcal{L}$ satisfies conditions
$\left(\alpha\right)$ and $\left(\beta\right)$, then $K$ is $s$-$\mathcal{L}$-dense in $M.$
\end{lem}

\begin{lem}\label{lem:(2.3.9)} Let $\rho$ be any $P$-filter. Then $(M,N)\in\rho$ if and only if $N$ is $s$-$\mathcal{L}_{(\rho,\infty)}$-dense  in $M.$
\end{lem}

Following \cite[p. 21]{DaZh06}, for any module $M,$ denote by  $H_\K(M)$ the set of left submodules
$N$ of $M$ such that $(M/N)\in \K,$ where $\K$ is any nonempty class of modules (i.e.,  $H_{\K}(M)=\{N\leq M\,\mid\,$
$(M/N)\in \K\}$).  In particular, $H_{\K}(R)=\{I\leq R\,\mid\,(R/I)\in \K\}.$

$\vphantom{}$

 The following theorem is the first main result of this section.

\begin{thm}\label{thm:(2.3.10)} Let $\mathcal{L}$ satisfy  conditions $\left(\alpha\right)$ and $\left(\delta\right)$
 and let $\K$ be any nonempty class of modules closed under isomorphic copies and $\mathcal{L}$-injective hulls. If the direct
sum of any family $\{E_{i}\}_{i\in N}$ of $\mathcal{L}$-injective $R$-modules in $\K$ is $\mathcal{L}$-injective, then every ascending
chain $I_{1}\subseteq I_{2}\subseteq...$ of left ideals of $R$ in
$H_{\K}(R)$ with   $I_{\infty}=\bigcup_{j=1}^{\infty}I_{j}$ $s$-$\mathcal{L}$-dense  in $R,$ terminates.
\end{thm}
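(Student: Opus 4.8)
The plan is to argue directly that any such chain must become stationary, by manufacturing a single homomorphism out of $I_{\infty}$ whose extension to $R$ forces the chain to stabilize. First I would form, for each $j$, the $\mathcal{L}$-injective hull $E_{j}=E_{\mathcal{L}}(R/I_{j})$: since $I_{j}\in H_{\mathcal{K}}(R)$ we have $R/I_{j}\in\mathcal{K}$, and because $\mathcal{K}$ is closed under $\mathcal{L}$-injective hulls, each $E_{j}$ lies in $\mathcal{K}$ and is $\mathcal{L}$-injective (with $R/I_{j}$ embedded in $E_{j}$). The standing hypothesis on direct sums then makes $E=\bigoplus_{j}E_{j}$ an $\mathcal{L}$-injective module.

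Next I would build $\phi:I_{\infty}\to E$ by $\phi(a)=(a+I_{j})_{j}$, where $a+I_{j}$ is read inside $R/I_{j}\hookrightarrow E_{j}$. The key point is that $\phi$ takes values in the direct sum, not merely the product: if $a\in I_{k}$ then $a\in I_{j}$ for every $j\geq k$, so $a+I_{j}=0$ for $j\geq k$ and $\textup{supp}(\phi(a))\subseteq\{1,\dots,k-1\}$ is finite. Hence $\phi$ is a well-defined $R$-homomorphism. Since $I_{\infty}$ is $s$-$\mathcal{L}$-dense in $R$, Definition~\ref{defn:(2.3.7)} gives $(R,I_{\infty},I_{I_{\infty}},I_{\infty})\in\mathcal{L}$; applying condition $\left(\delta\right)$ with the homomorphism $\phi:I_{\infty}\to E$ (so that $\phi\circ I_{I_{\infty}}=\phi$) yields $(R,I_{\infty},\phi,E)\in\mathcal{L}$.

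Now $\mathcal{L}$-injectivity of $E$ provides $\psi:R\to E$ with $(\psi\upharpoonright I_{\infty})=\phi$. As $\psi(1)\in E=\bigoplus_{j}E_{j}$ has finite support, I would choose $n$ with $\textup{supp}(\psi(1))\subseteq\{1,\dots,n\}$. For every $a\in I_{\infty}$, $R$-linearity gives $\phi(a)=\psi(a)=a\,\psi(1)$, so $\textup{supp}(\phi(a))\subseteq\{1,\dots,n\}$; reading the $j$th coordinate for $j>n$ forces $a+I_{j}=0$ in $R/I_{j}$ (the embedding $R/I_{j}\hookrightarrow E_{j}$ is injective), i.e. $a\in I_{j}$. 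Since this holds for all $a\in I_{\infty}$ we obtain $I_{\infty}\subseteq I_{j}$, and together with $I_{j}\subseteq I_{\infty}$ this gives $I_{j}=I_{\infty}$ for all $j>n$, so the chain terminates.

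The delicate points I expect are, first, verifying that $\phi$ genuinely lands in the direct sum rather than the product — this is exactly where the ascending-chain structure and the choice of the $E_{j}$ are used — and, second, matching the data so that condition $\left(\delta\right)$ upgrades the bare inclusion datum $(R,I_{\infty},I_{I_{\infty}},I_{\infty})$ coming from $s$-$\mathcal{L}$-density into the membership $(R,I_{\infty},\phi,E)\in\mathcal{L}$ needed to invoke $\mathcal{L}$-injectivity. Condition $\left(\alpha\right)$ is available should one wish to adjust or enlarge the domain before extending, but the heart of the argument is the finiteness of $\textup{supp}(\psi(1))$ propagating, via $\psi(a)=a\,\psi(1)$, to force the chain to stabilize.
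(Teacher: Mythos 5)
Your proposal is correct and follows essentially the same route as the paper's own proof: the same map $\phi(a)=(a+I_{j})_{j}$ into $\bigoplus_{j}E_{\mathcal{L}}(R/I_{j})$, the same use of $s$-$\mathcal{L}$-density plus condition $\left(\delta\right)$ to get $(R,I_{\infty},\phi,\bigoplus_{j}E_{j})\in\mathcal{L}$, and the same finite-support argument on the extension evaluated at $1$ to force $I_{\infty}=I_{j}$ for large $j$. No substantive differences.
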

\begin{proof}
Let $I_{1}\subseteq I_{2}\subseteq...$ be any ascending chain of left ideals of $R$ in  $H_{\K}(R)$  with $I_{\infty}=\bigcup_{j=1}^{\infty}I_{j}$
being a  $s$-$\mathcal{L}$-dense left ideal in $R.$ Thus $(R/I_{j})\in \K,\,\forall j\in\mathbb{N}.$
 Since $\mathcal{L}$ satisfies conditions $\left(\alpha\right),$\,  $\left(\beta\right)$
and $\left(\gamma\right)$, it follows from \cite[Theorem 1.12, p. 625]{Jir75}  that every $R$-module $M$ has an $\mathcal{L}$-injective hull
which is unique up to $M$-isomorphism. Let $E_{\mathcal{L}}(R/I_{j})$ be the $\mathcal{L}$-injective hull of $R/I_{j},\,\forall j\in\mathbb{N}.$
 Since $\K$ closed under $\mathcal{L}$-injective hulls, $E_{\mathcal{L}}(R/I_{j})\in \K,\,\forall j\in\mathbb{N}.$
 Define $f:I_{\infty}=\bigcup_{j=1}^{\infty}I_{j}\rightarrow\bigoplus_{j=1}^{\infty}E_{\mathcal{L}}(R/I_{j})$
by $f(r)=(r+I_{j})_{j\in\mathbb{N}},$  \,for $r\in I_{\infty}.$ Note that
 $f$ is a well-defined mapping:  for any  $r\in I_{\infty}$, let
$n$ be the smallest positive integer such that $r\in I_{n}.$ Since
$I_{n}\subseteq I_{n+k},\,\forall k\in\mathbb{N},$ we have $r\in I_{n+k},\,\forall k\in\mathbb{N}$
and so $r+I_{n+k}=0,\,\forall k\in\mathbb{N}.$ Thus $(r+I_{j})_{j\in\mathbb{N}}=(r+I_{1},r+I_{2},...,r+I_{n-1},0,0,...)\in\bigoplus_{j=1}^{\infty}E_{\mathcal{L}}(R/I_{j}).$
Thus $f(I)\subseteq\bigoplus_{j=1}^{\infty}E_{\mathcal{L}}(R/I_{j})$
and hence $f$ is a well-defined mapping. It is clear that $f$ is
a homomorphism. Since $I_{\infty}$ is a $s$-$\mathcal{L}$-dense
left ideal in $R,$ it follows that $(R,I_{\infty},I_{I_{\infty}},I_{\infty})\in\mathcal{L}.$
Since $\mathcal{L}$ satisfies condition $\left(\delta\right)$,
$(R,I_{\infty},f,\bigoplus_{j=1}^{\infty}E_{\mathcal{L}}(R/I_{j}))\in\mathcal{L}.$
Since $E_{\mathcal{L}}(R/I_{j})$ is an $\mathcal{L}$-injective $R$-module
in $\K,\,\forall j\in\mathbb{N},$ it follows from the hypothesis  that
$\bigoplus_{j=1}^{\infty}E_{\mathcal{L}}(R/I_{j})$ is an $\mathcal{L}$-injective
$R$-module. Thus by Theorem~\ref{thm:(2.1.11)}, there exists an element $x\in\bigoplus_{j=1}^{\infty}E_{\mathcal{L}}(R/I_{j})$
such that $f(r)=rx,\,\forall r\in I_{\infty}.$ Since $x\in\bigoplus_{j=1}^{\infty}E_{\mathcal{L}}(R/I_{j})$,
 $x=(x_{1},x_{2},...,x_{n},0,0,...),$  \,for some $n\in\mathbb{N}$
 and hence  $(r+I_{j})_{j\in N}=(rx_{1},rx_{2},...,rx_{n},0,0,...)$ and
this implies that $r+I_{n+k}=0,\,\forall k\geq1$ and \,$\forall r\in I_{\infty},$
 Thus $r\in I_{n+k},\,\forall k\geq1$ and \, $\forall r\in I_{\infty}$
and so $I_{\infty}=\bigcup_{j=1}^{\infty}I_{j}\subseteq I_{n+k},\,\forall k\geq1.$
Since $I_{n+k}\subseteq I_{\infty},\,I_{\infty}=I_{n+k},\forall k\geq 1,$
 $I_{t}=I_{t+j},\,\forall j\in\mathbb{N}.$ Therefore
the ascending chain $I_{1}\subseteq I_{2}\subseteq...$  terminates.
\end{proof}

 Now we will state the condition $\left(I\right)$  on $\mathcal{L}$
as follows:

$\left(I\right):\,(R,J,f,Q)\in\mathcal{L}$ implies that $J$ is $s$-$\mathcal{L}$-dense in  $R.$  That is, $(R,J,f,Q)\in\mathcal{L}$ implies $(R,J,I_J,J)\in\mathcal{L}$.

\begin{prop}\label{prop:(2.3.11)} Consider the following two conditions, where $\K$ is a nonempty class of $R$-modules.

$\left(1\right)$ Every ascending chain $I_{1}\subseteq I_{2}\subseteq...$ of left ideals of $R$ in $H_{\K}(R)$ with  $I_{\infty}=\bigcup_{j=1}^{\infty}I_{j}$  $s$-$\mathcal{L}$-$dense$  in $R,$ terminates.

$\left(2\right)$ The following conditions hold:

$\quad \left(a\right)$ $H_{\K}(R)$ has $ACC$ on $s$-$\mathcal{L}$-dense left ideals in $R;$

$\quad\left(b\right)$ for every ascending chain $I_{1}\subseteq I_{2}\subseteq...$
of left ideals of $R$ in $H_{\K}(R)$ with $I_{\infty}=\bigcup_{j=1}^{\infty}I_{j}$
$s$-$\mathcal{L}$-dense  in $R,$ there exists a positive integer $n$ such that $I_{n}$ is $s$-$\mathcal{L}$-dense in $R.$

If $\mathcal{L}$ satisfies conditions $\left(\alpha\right)$ and $\left(\beta\right)$, then $\left(1\right)$ and $\left(2\right)$ are equivalent.
\end{prop}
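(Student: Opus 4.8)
The plan is to prove the equivalence $(1)\Leftrightarrow(2)$ by exploiting the upward-closure property of $s$-$\mathcal{L}$-dense submodules recorded in Lemma~\ref{lem:(2.3.8)}$(2)$: under hypotheses $\left(\alpha\right)$ and $\left(\beta\right)$, if $N$ is $s$-$\mathcal{L}$-dense in $M$ and $N\leq K\leq M$, then $K$ is $s$-$\mathcal{L}$-dense in $M$. This single fact is what makes the two formulations interchangeable, since it lets me promote $s$-$\mathcal{L}$-density along an ascending chain as soon as any one term is known to be $s$-$\mathcal{L}$-dense. Accordingly, $\left(\alpha\right)$ and $\left(\beta\right)$ will enter the argument only through this lemma.

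For $(1)\Rightarrow(2)$, I would verify $(a)$ and $(b)$ separately. To get $(a)$, take any ascending chain $I_{1}\subseteq I_{2}\subseteq\cdots$ of $s$-$\mathcal{L}$-dense left ideals lying in $H_{\K}(R)$. Since $I_{1}$ is $s$-$\mathcal{L}$-dense in $R$ and $I_{1}\leq I_{\infty}\leq R$, Lemma~\ref{lem:(2.3.8)}$(2)$ gives that $I_{\infty}=\bigcup_{j=1}^{\infty}I_{j}$ is $s$-$\mathcal{L}$-dense in $R$; hence $(1)$ forces the chain to terminate, which is exactly ACC on $s$-$\mathcal{L}$-dense left ideals in $H_{\K}(R)$. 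For $(b)$, start from any chain in $H_{\K}(R)$ whose union $I_{\infty}$ is $s$-$\mathcal{L}$-dense; by $(1)$ it terminates, so $I_{n}=I_{\infty}$ for some $n$, and thus this $I_{n}$ is $s$-$\mathcal{L}$-dense, as required.

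For $(2)\Rightarrow(1)$, I would take an arbitrary ascending chain $I_{1}\subseteq I_{2}\subseteq\cdots$ of left ideals in $H_{\K}(R)$ with $I_{\infty}$ $s$-$\mathcal{L}$-dense in $R$. Condition $(b)$ yields an index $n$ with $I_{n}$ $s$-$\mathcal{L}$-dense in $R$. Applying Lemma~\ref{lem:(2.3.8)}$(2)$ to each inclusion $I_{n}\leq I_{n+k}\leq R$ shows that every later term $I_{n+k}$ is $s$-$\mathcal{L}$-dense, so the tail $I_{n}\subseteq I_{n+1}\subseteq\cdots$ is an ascending chain of $s$-$\mathcal{L}$-dense left ideals inside $H_{\K}(R)$. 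By the ACC supplied in $(a)$ this tail stabilizes, and therefore the original chain terminates.

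The argument is essentially bookkeeping once Lemma~\ref{lem:(2.3.8)}$(2)$ is in hand, so I do not anticipate a serious obstacle; the only point requiring care is to ensure the hypotheses of $(1)$ are genuinely met each time it is invoked, namely confirming both that every term of the chain remains in $H_{\K}(R)$ and that the union is $s$-$\mathcal{L}$-dense before concluding termination. The delicate structural ingredient is precisely the passage from $s$-$\mathcal{L}$-density of a single term to that of the whole tail, which is impossible without the upward-closure furnished by $\left(\alpha\right)$ and $\left(\beta\right)$, and this is the crux shared by both implications.
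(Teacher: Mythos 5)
Your proposal is correct and follows essentially the same route as the paper: both directions hinge on the upward-closure of $s$-$\mathcal{L}$-density from Lemma~\ref{lem:(2.3.8)}$(2)$, used first to show $I_{\infty}$ is $s$-$\mathcal{L}$-dense when $I_{1}$ is (for $(1)\Rightarrow(2a)$) and then to propagate density from $I_{n}$ to the whole tail before invoking the ACC of $(2a)$ (for $(2)\Rightarrow(1)$). The only difference is cosmetic: you spell out $(1)\Rightarrow(2b)$, which the paper dismisses as clear.
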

\begin{proof} Clearly $\left(1\right)\Rightarrow\left(2b\right)$.

$\hspace{1em}\left(1\right)\Rightarrow\left(2a\right)$ Assume that $\mathcal{L}$ satisfies conditions $\left(\alpha\right)$
and $\left(\beta\right)$  and let $I_{1}\subseteq I_{2}\subseteq...$ be any ascending chain of $s$-$\mathcal{L}$-dense left ideals
of $R$ in $H_{\K}(R).$  Since $I_{1}\subseteq I_{\infty}=\bigcup_{j=1}^{\infty}I_{j},$
it follows from  Lemma~\ref{lem:(2.3.8)}   that  $I_{\infty}$ is $s$-$\mathcal{L}$-dense in $R.$ By hypothesis,  the chain $I_{1}\subseteq I_{2}\subseteq...$ terminates. Thus $H_{\K}(R)$  has $ACC$ on $s$-$\mathcal{L}$-dense left ideals in $R.$

$\left(2\right)\Rightarrow\left(1\right)$ Assume that $\mathcal{L}$ satisfies conditions $\left(\alpha\right)$ and $\left(\beta\right)$
and let $I_{1}\subseteq I_{2}\subseteq...$ be any ascending chain of left ideals of $R$ in  $H_{\K}(R)$,  with $I_{\infty}=\bigcup_{j=1}^{\infty}I_{j}$ $s$-$\mathcal{L}$-dense in $R.$  By $\left(2b\right)$, there exists a positive integer $n$ such that $I_{n}$ is $s$-$\mathcal{L}$-dense in $R.$ Consider the following ascending chain $I_{n}\subseteq I_{n+1}\subseteq...$ of left ideals of $R.$ Since $\mathcal{L}$ satisfies  conditions $\left(\alpha\right)$ and $\left(\beta\right)$,  it follows from Lemma~\ref{lem:(2.3.8)}   that $I_{n+j}$ is $s$-$\mathcal{L}$-dense left ideal in $R,$  $\forall j\in\mathbb{N}.$ By $\left(2a\right)$, there exists a positive integer $t\geq n$ such that $I_{t}=I_{t+j},\,\forall j\in\mathbb{N}.$ Thus the ascending chain $I_{1}\subseteq I_{2}\subseteq...$ of left ideals  terminates.
\end{proof}

 Now we will give the second main result of this section.

\begin{thm}\label{thm:(2.3.12)} Let $\mathcal{L}$ satisfy conditions $\left(\alpha\right), \,\, \left(\mu\right), \,\, \left(\delta\right)$
and $\left(I\right)$  and let $\K$ be any nonempty class of modules closed under isomorphic copies and submodules. If every ascending chain $J_{1}\subseteq J_{2}\subseteq...$ of left ideals of $R$ such that $(J_{i+1}/J_{i})\in \K,\, \forall i\in\mathbb{N}$
and $J_{\infty}=\bigcup_{i=1}^{\infty}J_{i}$ \, $s$-$\mathcal{L}$-dense in $R$ terminates, then every direct sum of $\mathcal{L}$-injective
modules in $\K$ is $\mathcal{L}$-injective.
\end{thm}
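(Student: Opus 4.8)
The plan is to prove the contrapositive: assuming that some direct sum $\bigoplus_{\alpha\in A}E_\alpha$ of $\mathcal{L}$-injective modules in $\K$ fails to be $\mathcal{L}$-injective, I will construct a strictly ascending chain $J_1\subseteq J_2\subseteq\cdots$ of left ideals with each factor $J_{i+1}/J_i\in\K$ and with $J_\infty=\bigcup_i J_i$ being $s$-$\mathcal{L}$-dense in $R$, that does not terminate. By Lemma~\ref{lem:(2.3.2)}, the failure of $\mathcal{L}$-injectivity (given that $\mathcal{L}$ satisfies $(\alpha),(\mu),(\delta)$) forces the existence of a quadruple $(R,I,f,\bigoplus_{\alpha\in A}E_\alpha)\in\mathcal{L}$ for which $\operatorname{supp}(\operatorname{im}(f))$ is infinite. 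This single object is the engine that drives the whole construction.

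**Building the chain.** Using that $\operatorname{supp}(\operatorname{im}(f))$ is infinite, I would pick a countably infinite subset $\{\alpha_1,\alpha_2,\dots\}$ of it and set, for each $n$, the left ideal
\[
J_n=\{\,r\in I\mid \pi_{\alpha_k}(f(r))=0\ \text{for all }k>n\,\}=f^{-1}\!\Big(\bigoplus_{\alpha\in F_n}E_\alpha\Big)\cap I,
\]
where $F_n$ is a suitable finite support set containing $\alpha_1,\dots,\alpha_n$. These $J_n$ form an ascending chain inside $I$, and because at each stage $\alpha_{n+1}$ genuinely appears in the image, the chain can be arranged to be strictly increasing (this is where the infinitude of the support is spent). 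The union $J_\infty$ will be an $s$-$\mathcal{L}$-dense ideal: here condition $(I)$ is crucial, since it converts the membership $(R,I,f,\bigoplus E_\alpha)\in\mathcal{L}$ into $(R,I,I_I,I)\in\mathcal{L}$, i.e. $I$ itself is $s$-$\mathcal{L}$-dense, and then Lemma~\ref{lem:(2.3.8)}(2) (using $(\alpha),(\beta)$) propagates $s$-$\mathcal{L}$-density upward to $J_\infty$.

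**Controlling the factors.** The remaining hypothesis to satisfy is $J_{i+1}/J_i\in\K$. The map $f$ induces an embedding of $J_{i+1}/J_i$ into a finite direct sum of the $E_\alpha$'s (more precisely into $\bigoplus_{\alpha\in F_{i+1}\setminus F_i}E_\alpha$, via the components $\pi_{\alpha}\circ f$), so $J_{i+1}/J_i$ is isomorphic to a submodule of modules drawn from the family. Since $\K$ is closed under isomorphic copies and submodules, I would need to ensure the relevant $E_\alpha$ (or the ambient finite sum) lies in $\K$; this is where I must be careful to set up the family so that the factors land in $\K$. The cleanest route is to read the first main theorem, Theorem~\ref{thm:(2.3.10)}, in reverse: it says $\K$-controlled non-termination yields a non-$\mathcal{L}$-injective direct sum, and the present theorem is essentially its converse, so the diagonal/support argument of Theorem~\ref{thm:(2.3.10)} is the model to imitate.

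**Main obstacle.** The delicate point will be simultaneously guaranteeing (i) strict ascent of the chain, (ii) $s$-$\mathcal{L}$-density of the union, and (iii) the factor condition $J_{i+1}/J_i\in\K$. Conditions (i) and (iii) pull in opposite directions: making the factors small and inside $\K$ may force equalities in the chain, while forcing strict inclusions may enlarge the factors beyond $\K$. Reconciling them requires choosing the finite support sets $F_n$ adaptively so that each newly captured coordinate $\alpha_{n+1}$ contributes a genuinely new, but still $\K$-controlled, quotient. Once the non-terminating chain with $\K$-factors and $s$-$\mathcal{L}$-dense union is produced, it directly contradicts the termination hypothesis, completing the contrapositive and hence the theorem.
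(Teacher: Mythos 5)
Your core construction --- pulling back finite partial sums under $f$ to get the chain $J_n$, embedding each factor $J_{n+1}/J_n$ into a single $E_{\alpha_{n+1}}\in\K$, and using condition $\left(I\right)$ for the density of the union --- is exactly the engine of the paper's proof, and your contrapositive framing is harmless in itself. But your execution for an arbitrary index set $A$ has a genuine gap. You restrict attention to a countably infinite subset $C=\{\alpha_1,\alpha_2,\dots\}$ of $\textup{supp}(\textup{im}(f))$ and set $J_n=f^{-1}\bigl(\bigoplus_{\alpha\in F_n}E_\alpha\bigr)\cap I$. Then $J_\infty=\bigcup_n J_n=f^{-1}\bigl(\bigoplus_{\alpha\in C}E_\alpha\bigr)\cap I$ is in general a \emph{proper} subideal of $I$: any $x\in I$ whose image has a nonzero component outside $C$ never enters any $J_n$. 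This breaks your argument in two places. First, Lemma~\ref{lem:(2.3.8)}(2) propagates $s$-$\mathcal{L}$-density from a submodule \emph{upward} to larger submodules; since $J_\infty\subseteq I$, knowing from condition $\left(I\right)$ that $I$ is $s$-$\mathcal{L}$-dense tells you nothing about $J_\infty$, so the density of the union is not established. Second, non-termination fails too: the indices $\alpha_{n+1}$ witnessing infinite support need not contribute new elements to $J_{n+1}$ (the witnessing elements may have image components outside $F_{n+1}$), so the chain could well be eventually constant, e.g.\ constantly $\ker f\cap I$, and you get no contradiction. The ``main obstacle'' you flag (tension between strict ascent and $\K$-factors) is not the real difficulty; the real difficulty is that your chain does not exhaust $I$.

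The fix, which is how the paper proceeds, is to first reduce to \emph{countable} families: conditions $\left(\alpha\right)$, $\left(\mu\right)$, $\left(\delta\right)$ give, via Proposition~\ref{prop:(2.3.3)} and Corollary~\ref{cor:(2.3.6)}, that it suffices to show $\bigoplus_{i\in\mathbb{N}}E_i$ is $\mathcal{L}$-injective for families indexed by $\mathbb{N}$. For such a family and any $(R,J,f,\bigoplus_{i\in\mathbb{N}}E_i)\in\mathcal{L}$, the chain $J_n=f^{-1}\bigl(\bigoplus_{i=1}^{n}E_i\bigr)$ satisfies $\bigcup_n J_n=f^{-1}\bigl(\bigoplus_{i=1}^{\infty}E_i\bigr)=J$, because every element of a countable direct sum lies in some finite partial sum. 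Now condition $\left(I\right)$ applies directly to $J_\infty=J$, each $J_{n+1}/J_n$ embeds in $\bigoplus_{i=1}^{n+1}E_i/\bigoplus_{i=1}^{n}E_i\simeq E_{n+1}\in\K$, and termination of the chain at $J_n=J$ forces $f(J)\subseteq\bigoplus_{i=1}^{n}E_i$, i.e.\ condition $\left(F\right)$, whence Lemma~\ref{lem:(2.3.2)} concludes. Without this preliminary reduction your argument does not go through.
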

\begin{proof} Let $\{E_{i}\}_{i\in\mathbb{N}}$ be any family of $\mathcal{L}$-injective modules in $\K$ and let $(R,J,f,\bigoplus_{i\in\mathbb{N}}E_{i})\in\mathcal{L}.$ For any $n\in\mathbb{N},$ put $J_{n}=\{x\in J\;\mid  f(x)\in\bigoplus_{i=1}^{n}E_{i}\}=f^{-1}(\bigoplus_{i=1}^{n}E_{i}).$ It is clear that $J_{1}\subseteq J_{2}\subseteq...$.
Also, we have that $J_{\infty}=\bigcup_{n\in\mathbb{N}}J_{n}=\bigcup_{n\in\mathbb{N}}(f^{-1}(\bigoplus_{i=1}^{n}E_{i}))=f^{-1}(\bigcup_{n\in\mathbb{N}}(\bigoplus_{i=1}^{n}
E_{i})=f^{-1}(\bigoplus_{i=1}^{\infty}E_{i}).$ Since $(R,J,f,\bigoplus_{i\in\mathbb{N}}E_{i})\in\mathcal{L}$
and $\mathcal{L}$ satisfies  condition $\left(I\right),\, J=\bigcup_{i\in\mathbb{N}}J_{i}$ is $s$-$\mathcal{L}$-dense
in $R.$  For all $n\in\mathbb{N},$  define $\alpha_{n}:J_{n+1}/J_{n}\rightarrow$$\bigoplus_{i=1}^{n+1}E_{i}/\bigoplus_{i=1}^{n}E_{i}$
by $\alpha_{n}(x+J_{n})=f(x)+(\bigoplus_{i=1}^{n}E_{i}),\,\forall x\in I_{n+1}.$
$\alpha_{n}$  is a well-defined monomorphism,
 since $J_{n}=f^{-1}(\bigoplus_{i=1}^{n}E_{i})$. Since $(\bigoplus_{i=1}^{n+1}E_{i}/\bigoplus_{i=1}^{n}E_{i})\simeq E_{n+1}\in \K$
and $\K$ closed under isomorphic copies,  $(\bigoplus_{i=1}^{n+1}E_{i}/\bigoplus_{i=1}^{n}E_{i})$
$\in \K.$
Since $\textup{im}(\alpha_{n})\leq(\bigoplus_{i=1}^{n+1}E_{i}/\bigoplus_{i=1}^{n}E_{i})\in \K,$
   and $\K$ closed under submodules,  $\textup{im}(\alpha_{n})\in \K.$ Since $(J_{n+1}/J_{n})\simeq \textup{im}(\alpha_{n})$
and $\K$ closed under isomorphic copies,  $(J_{n+1}/J_{n})\in \K.$ Thus we have the following ascending chain
$J_{1}\subseteq J_{2}\subseteq...$ of left ideals of $R$ such that
$(J_{i+1}/J_{i})\in \K,\,\forall i\in\mathbb{N}$ and $J_{\infty}=\bigcup_{i=1}^{\infty}J_{i}$
is $s$-$\mathcal{L}$-dense in $R$. By hypothesis, there exists a positive integer $n$ such that $J_{n}=J_{n+i},\,\forall i\in\mathbb{N}.$  Thus $J=J_{\infty}=\bigcup_{i=1}^{\infty}J_{i}=J_{n}.$ This implies that $f(J)\subseteq\bigoplus_{i=1}^{n}E_{i}.$
 Thus $\textup{supp}(\textup{\textup{im}}(f))$ is finite and hence $\mathcal{L}$ satisfies condition $\left(F\right)$
for a family $\{E_{i}\}_{i\in\mathbb{N}}.$  Thus by Lemma~\ref{lem:(2.3.2)} we
have that $\bigoplus_{i\in\mathbb{N}}E_{i}$ is an $\mathcal{L}$-injective
module.  Thus for any family $\{E_{i}\}_{i\in N}$ of $\mathcal{L}$-injective
$R$-modules in $\K,$ we have $\bigoplus_{i\in N}E_{i}$ is $\mathcal{L}$-injective.
 Since $\mathcal{L}$ satisfies conditions $\left(\alpha\right),\,\,\left(\mu\right)$
and $\left(\delta\right)$, it follows from  Corollary~\ref{cor:(2.3.6)},   that every direct sum of $\mathcal{L}$-injective
modules in $\K$ is $\mathcal{L}$-injective.
\end{proof}

A nonempty class $\K$ of modules is said to be a natural class if it is closed under submodules, arbitrary direct sums and injective
hulls \cite{Dau91}.  Examples of natural classes include $R\textup{-Mod},$ any hereditary torsionfree classes and stable hereditary torsion classes.

$\vphantom{}$

 We can now state the following result,  found in \cite[p. 643]{PaZh94} as a corollary.

\begin{cor}\label{cor:(2.3.13)} Let $\K$ be a natural class of modules closed under isomorphic copies. Then the following statements are equivalent:

$\left(1\right)$ every direct sum of injective modules in $\K$ is injective;

$\left(2\right)$ $H_{\K}(R)$ has $ACC.$
\end{cor}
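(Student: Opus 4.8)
The plan is to specialize the machinery of this section to $\mathcal{L}=\mathcal{M}$ and then read the two implications off Theorems~\ref{thm:(2.3.10)} and~\ref{thm:(2.3.12)}. First I would record the dictionary for $\mathcal{L}=\mathcal{M}$. By Lemma~\ref{lem:(2.2.7)} a module is $\mathcal{M}$-injective iff it is injective and $E_{\mathcal{M}}(M)=E(M)$; moreover every quadruple lies in $\mathcal{M}$, so $\mathcal{M}$ trivially satisfies $\left(\alpha\right),\left(\beta\right),\left(\gamma\right),\left(\delta\right),\left(\lambda\right),\left(\mu\right)$ and $\left(I\right)$, and every submodule is $s$-$\mathcal{M}$-dense (since $(M,N,I_{N},N)\in\mathcal{M}$ always). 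In particular all the $s$-$\mathcal{L}$-density hypotheses in the theorems become vacuous. Finally, since $\mathcal{K}$ is a natural class closed under isomorphic copies, it is closed under submodules, arbitrary direct sums and injective hulls $=\mathcal{M}$-injective hulls, which is exactly the closure required of $\mathcal{K}$ in both theorems.

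For $\left(1\right)\Rightarrow\left(2\right)$ I would apply Theorem~\ref{thm:(2.3.10)} directly. Its hypothesis, that the direct sum of any countable family of $\mathcal{M}$-injective (= injective) modules in $\mathcal{K}$ is injective, is a special case of $\left(1\right)$. Its conclusion is that every ascending chain in $H_{\mathcal{K}}(R)$ with $s$-$\mathcal{M}$-dense union terminates; as $s$-$\mathcal{M}$-density is automatic, this is precisely $ACC$ on $H_{\mathcal{K}}(R)$, i.e. $\left(2\right)$.

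For $\left(2\right)\Rightarrow\left(1\right)$ I would invoke Theorem~\ref{thm:(2.3.12)} (its hypotheses $\left(\alpha\right),\left(\mu\right),\left(\delta\right),\left(I\right)$ on $\mathcal{M}$ and the closure of $\mathcal{K}$ under submodules and isomorphic copies all hold), upgrading from countable to arbitrary direct sums via Proposition~\ref{prop:(2.3.3)} and Corollary~\ref{cor:(2.3.6)}. The only point to settle is the chain hypothesis of Theorem~\ref{thm:(2.3.12)}: that every ascending chain $J_{1}\subseteq J_{2}\subseteq\cdots$ of left ideals with $J_{i+1}/J_{i}\in\mathcal{K}$ stabilizes. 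One inclusion is formal, since a chain inside $H_{\mathcal{K}}(R)$ has $J_{i+1}/J_{i}\leq R/J_{i}\in\mathcal{K}$ and hence successive quotients in $\mathcal{K}$; thus $ACC$ on $H_{\mathcal{K}}(R)$ follows from the chain hypothesis. The real content of $\left(2\right)\Rightarrow\left(1\right)$ is the converse: deducing termination of \emph{every} $\mathcal{K}$-filtered ideal chain from $ACC$ on $H_{\mathcal{K}}(R)$ alone.

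This is exactly where I expect the difficulty to lie, and the tool I would build is the complementary natural class $\mathcal{K}^{d}=\{M\mid M\text{ has no nonzero submodule in }\mathcal{K}\}$, together with two preliminary closure facts for a natural class: (i) $\mathcal{K}$ is closed under extensions — given $0\to A\to B\to C\to0$ with $A,C\in\mathcal{K}$, a complement $A'$ of $A$ in $B$ embeds in $B/A=C$, so $A\oplus A'\in\mathcal{K}$ is essential in $B$ and $E(B)=E(A\oplus A')\in\mathcal{K}$, forcing $B\in\mathcal{K}$; and (ii) $\mathcal{K}$ is locally closed, i.e. a module all of whose cyclic submodules lie in $\mathcal{K}$ lies in $\mathcal{K}$, because $\mathcal{K}\cap\mathcal{K}^{d}=0$ and $M\in\mathcal{K}$ iff $M$ has no nonzero submodule in $\mathcal{K}^{d}$. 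Using (i) a $\mathcal{K}$-filtered chain gives $J_{i}/J_{1}\in\mathcal{K}$, and using (ii) the union $J_{\infty}/J_{1}\in\mathcal{K}$; the remaining and most delicate step is to convert a hypothetically non-terminating $\mathcal{K}$-filtered chain into an honest strictly ascending chain in $H_{\mathcal{K}}(R)$ (for instance through annihilators $ann_{R}(x)$ with $R/ann_{R}(x)\cong Rx\in\mathcal{K}$, organised via the type-decomposition $M'\oplus M''\leq_{e}R/J_{1}$ with $M'\in\mathcal{K}$, $M''\in\mathcal{K}^{d}$), thereby contradicting $\left(2\right)$. Once the chain hypothesis is verified this way, Theorem~\ref{thm:(2.3.12)} delivers $\left(1\right)$. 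The main obstacle is thus precisely the equivalence, for natural classes, of ``$ACC$ on $H_{\mathcal{K}}(R)$'' with ``every $\mathcal{K}$-filtered ideal chain terminates''; the rest is bookkeeping through the $\mathcal{L}=\mathcal{M}$ dictionary.
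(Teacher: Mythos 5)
Your overall route is the same as the paper's: specialize to $\mathcal{L}=\mathcal{M}$, note that all the conditions $(\alpha),(\beta),(\gamma),(\delta),(\mu),(I)$ hold trivially and that $s$-$\mathcal{M}$-density is automatic, and then read $\left(1\right)\Rightarrow\left(2\right)$ off Theorem~\ref{thm:(2.3.10)} and $\left(2\right)\Rightarrow\left(1\right)$ off Theorem~\ref{thm:(2.3.12)} (with Lemma~\ref{lem:(2.1.9)} and Lemma~\ref{lem:(2.3.9)} supplying the dictionary). Your treatment of $\left(1\right)\Rightarrow\left(2\right)$ is complete and matches the paper exactly.

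For $\left(2\right)\Rightarrow\left(1\right)$ you have correctly isolated the crux: Theorem~\ref{thm:(2.3.12)} needs termination of every chain $J_{1}\subseteq J_{2}\subseteq\cdots$ with $J_{i+1}/J_{i}\in\mathcal{K}$, which is strictly stronger on its face than $ACC$ on $H_{\mathcal{K}}(R)$, since such a chain need not live in $H_{\mathcal{K}}(R)$. The paper does not prove this implication; it cites \cite[Lemma 7, p. 637]{PaZh94} for precisely this fact about natural classes. Your proposal instead tries to reprove it via the complementary class $\mathcal{K}^{d}$, closure under extensions, and local closure, but you stop at what you yourself call ``the most delicate step'': converting a hypothetically non-terminating $\mathcal{K}$-filtered chain into a strictly ascending chain in $H_{\mathcal{K}}(R)$. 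That step is only gestured at (``for instance through annihilators \ldots organised via the type-decomposition''), so as written your argument has a genuine gap at exactly the point where the paper invokes Page--Zhou. The lemma you need is true and is exactly the cited one, so the plan is sound; but to make the proof self-contained you would have to actually carry out that reduction, and as it stands you have not. The cleanest fix is simply to cite \cite[Lemma 7]{PaZh94} as the paper does, or to write out the annihilator/type-decomposition argument in full.
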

\begin{proof}
$\left(1\right)\Rightarrow\left(2\right)$ By taking $\mathcal{L}=\M$
and applying Lemma~\ref{lem:(2.1.9)}, Lemma~\ref{lem:(2.3.9)} and Theorem~\ref{thm:(2.3.10)}.

$\left(2\right)\Rightarrow\left(1\right)$ By taking $\mathcal{L}=\M$
and applying \cite[Lemma 7, p. 637]{PaZh94} and Theorem~\ref{thm:(2.3.12)}.
\end{proof}

\begin{cor}\label{cor:(2.3.14)} Let $\rho$ be any $P$-filter and let $\K$ be any nonempty class of
modules closed under isomorphic copies and submodules. If every ascending chain $J_{1}\subseteq J_{2}\subseteq...$ of left ideals of $R$ such
that $(J_{i+1}/J_{i})\in \K, \,\forall i\in\mathbb{N}$  and  $J_{\infty}=\bigcup_{i=1}^{\infty}J_{i}$  is $s$-$\mathcal{L}_{(\rho,\infty)}$-dense
in $R$ terminates, then every direct sum of $\mathcal{L}_{(\rho,\infty)}$-injective modules in $\K$ is $\mathcal{L}_{(\rho,\infty)}$-injective.
\end{cor}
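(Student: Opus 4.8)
The plan is to obtain this corollary as a direct instance of Theorem~\ref{thm:(2.3.12)} applied to the subclass $\mathcal{L}=\mathcal{L}_{(\rho,\rho_{\infty})}=\mathcal{L}_{(\rho,\infty)}$, where $\rho_{\infty}=\Re$. So the entire task reduces to checking that this particular $\mathcal{L}$ satisfies the four hypotheses $\left(\alpha\right)$, $\left(\mu\right)$, $\left(\delta\right)$ and $\left(I\right)$ demanded by that theorem; once they are in hand, the conclusion is immediate and there is nothing further to compute.

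First I would record that $\rho_{\infty}=\Re$ is itself a $P$-filter, by Example~\ref{example:(2.1.8)}$\left(2\right)$, so that $\mathcal{L}_{(\rho,\infty)}$ genuinely has the form $\mathcal{L}_{(\rho_{1},\rho_{2})}$ with $\rho_{1}=\rho$ and $\rho_{2}=\rho_{\infty}$ both $P$-filters. Lemma~\ref{lem:(2.1.9)} then delivers conditions $\left(\alpha\right)$, $\left(\delta\right)$ and $\left(\mu\right)$ at once. The only hypothesis not handed over by Lemma~\ref{lem:(2.1.9)} is $\left(I\right)$, so this is the single point that calls for a short argument.

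For $\left(I\right)$ I would unwind the definition of $\mathcal{L}_{(\rho,\infty)}$. Because $\rho_{\infty}=\Re$ contains every pair $(M,N)\in\Re$, the defining kernel requirement $(M,\textup{ker}(f))\in\rho_{\infty}$ is vacuous, and hence $\mathcal{L}_{(\rho,\infty)}$ consists exactly of those $(M,N,f,Q)\in\M$ with $(M,N)\in\rho$. Consequently, if $(R,J,f,Q)\in\mathcal{L}_{(\rho,\infty)}$ then $(R,J)\in\rho$, which is precisely the condition forcing $(R,J,I_{J},J)\in\mathcal{L}_{(\rho,\infty)}$; equivalently, by Lemma~\ref{lem:(2.3.9)}, $J$ is $s$-$\mathcal{L}_{(\rho,\infty)}$-dense in $R$. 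This is exactly condition $\left(I\right)$.

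With all four conditions verified and $\K$ closed under isomorphic copies and submodules by assumption, Theorem~\ref{thm:(2.3.12)} applies verbatim: the hypothesis that every ascending chain $J_{1}\subseteq J_{2}\subseteq\cdots$ of left ideals of $R$ with $(J_{i+1}/J_{i})\in\K$ for all $i$ and with $J_{\infty}=\bigcup_{i=1}^{\infty}J_{i}$ being $s$-$\mathcal{L}_{(\rho,\infty)}$-dense in $R$ terminates yields that every direct sum of $\mathcal{L}_{(\rho,\infty)}$-injective modules in $\K$ is $\mathcal{L}_{(\rho,\infty)}$-injective. There is no deep obstacle here; the only place needing care is the bookkeeping in the verification of $\left(I\right)$, where one must check that the $\rho_{\infty}$ component really does trivialize the kernel requirement, so that the identity homomorphism $I_{J}$ serves as the witness to $s$-$\mathcal{L}_{(\rho,\infty)}$-density.
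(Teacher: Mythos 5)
Your proposal is correct and follows essentially the same route as the paper, which proves the corollary by combining Lemma~\ref{lem:(2.1.9)} (for conditions $\left(\alpha\right)$, $\left(\delta\right)$, $\left(\mu\right)$), Lemma~\ref{lem:(2.3.9)} (which is exactly your verification of condition $\left(I\right)$, since the $\rho_{\infty}$ kernel requirement is vacuous) and Theorem~\ref{thm:(2.3.12)}. You simply spell out the details that the paper leaves as citations.
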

\begin{proof}
By Lemma~\ref{lem:(2.1.9)},  Lemma~\ref{lem:(2.3.9)} and Theorem~\ref{thm:(2.3.12)}.
\end{proof}

Let $\tau$ be a hereditary torsion theory. A nonempty class $\K$ of modules is said to be $\tau$-natural class if $\K$ closed under submodules, isomorphic copies, arbitrary direct sums and $\tau$-injective hulls \cite[p. 163]{Cri04}.

\begin{cor}\label{cor:(2.3.15)}\textbf{(\cite[Proposition 5.3.5, p. 165]{Cri04})} Let $\K$ be a $\tau$-natural and suppose that every
ascending chain $J_{1}\subseteq J_{2}\subseteq...$ of left ideals of $R$ such that $(J_{i+1}/J_{i})\in \K,\,\forall i\in\mathbb{N}$ and  $J_{\infty}=\bigcup_{i=1}^{\infty}J_{i}$ is $\tau$-dense in $R$ terminates. Then every direct sum of $\tau$-injective
modules in $\K$ is $\tau$-injective.
\end{cor}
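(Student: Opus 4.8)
The plan is to derive this corollary as the specialization of Theorem~\ref{thm:(2.3.12)} to the subclass $\mathcal{L}=\mathcal{L}_{\tau}$ attached to the hereditary torsion theory $\tau$. Recall that $\mathcal{L}_{\tau}=\mathcal{L}_{(\rho_{\tau},\rho_{\infty})}$, where $\rho_{\tau}$ is the $P$-filter of $\tau$-dense submodules and $\rho_{\infty}=\Re$, both of which are $P$-filters by Example~\ref{example:(2.1.8)}. The first task is therefore to verify that $\mathcal{L}_{\tau}$ satisfies the four hypotheses $\left(\alpha\right)$, $\left(\mu\right)$, $\left(\delta\right)$ and $\left(I\right)$ required by Theorem~\ref{thm:(2.3.12)}, and that the class $\K$ meets the closure conditions demanded there.

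For $\left(\alpha\right)$, $\left(\mu\right)$ and $\left(\delta\right)$ I would simply invoke Lemma~\ref{lem:(2.1.9)}, which grants precisely these three conditions for every $\mathcal{L}_{(\rho_{1},\rho_{2})}$. For condition $\left(I\right)$ I would argue directly: if $(R,J,f,Q)\in\mathcal{L}_{\tau}$ then $J$ is $\tau$-dense in $R$, and since the second filter $\rho_{\infty}=\Re$ imposes no restriction on kernels, the quadruple $(R,J,I_{J},J)$ again lies in $\mathcal{L}_{\tau}$; this is exactly condition $\left(I\right)$. The hypothesis on $\K$ is immediate, since a $\tau$-natural class is by definition closed under submodules and isomorphic copies, which is all that Theorem~\ref{thm:(2.3.12)} asks of $\K$.

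It then remains only to translate the $\mathcal{L}_{\tau}$-language of Theorem~\ref{thm:(2.3.12)} back into $\tau$-language. By Lemma~\ref{lem:(2.3.9)} applied to $\rho=\rho_{\tau}$, a left ideal is $s$-$\mathcal{L}_{\tau}$-dense in $R$ if and only if it is $\tau$-dense in $R$, so the chain hypothesis of Theorem~\ref{thm:(2.3.12)} coincides word for word with the chain hypothesis assumed here; and by Lemma~\ref{lem:(2.2.8)}$\left(1\right)$ a module is $\mathcal{L}_{\tau}$-injective exactly when it is $\tau$-injective, so the conclusion of Theorem~\ref{thm:(2.3.12)} becomes the assertion that every direct sum of $\tau$-injective modules in $\K$ is $\tau$-injective. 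With these identifications the corollary follows at once. I do not expect any genuine obstacle: the whole argument is a matching of hypotheses together with a dictionary between the $\mathcal{L}$-theoretic and $\tau$-theoretic vocabularies, the only points needing an explicit line being the verification of $\left(I\right)$ for $\mathcal{L}_{\tau}$ (trivial because $\rho_{\infty}$ constrains nothing) and the density translation furnished by Lemma~\ref{lem:(2.3.9)}.
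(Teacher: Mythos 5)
Your proposal is correct and follows essentially the same route as the paper: the paper obtains this corollary by setting $\rho=\rho_{\tau}$ in Corollary~\ref{cor:(2.3.14)}, whose proof is exactly the combination of Lemma~\ref{lem:(2.1.9)}, Lemma~\ref{lem:(2.3.9)} and Theorem~\ref{thm:(2.3.12)} that you carry out inline. Your explicit verification of condition $\left(I\right)$ and the dictionary between $s$-$\mathcal{L}_{\tau}$-density and $\tau$-density match the intended argument.
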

\begin{proof}
 Take $\rho=\rho_\tau$ and apply Corollary~\ref{cor:(2.3.14)}.
 \end{proof}

 The following corollary, in which we give conditions under which the class of $\mathcal{L}$-injective
modules is closed under direct sums,  is one of the main aims of this section.

\begin{cor}\label{cor:(2.3.17)} Consider the following three conditions:

$\left(1\right)$ the class of $\mathcal{L}$-injective $R$-modules is closed under direct sums;

$\left(2\right)$ every ascending chain $I_{1}\subseteq I_{2}\subseteq...$ of left ideals of $R$ with  $I_{\infty}=\bigcup_{j=1}^{\infty}I_{j}$  $s$-$\mathcal{L}$-dense  in $R,$ terminates;

$\left(3\right)$ the following conditions hold:

$\quad$$\left(a\right)$ every ascending chain $I_{1}\subseteq I_{2}\subseteq...$ of $s$-$\mathcal{L}$-dense left ideals of $R,$ terminates;

$\quad$$\left(b\right)$ for every ascending chain $I_{1}\subseteq I_{2}\subseteq...$
of left ideals of $R$ with  $I_{\infty}=\bigcup_{j=1}^{\infty}I_{j}$  $s$-$\mathcal{L}$-dense  in $R,$ there exists a
positive integer $n$ such that $I_{n}$ is $s$-$\mathcal{L}$-dense in $R.$

If $\mathcal{L}$ satisfies conditions $\left(\alpha\right)$ and $\left(\delta\right)$,
 then $\left(1\right)$ implies $\left(2\right).$ Also, $\left(2\right)$
implies $\left(3b\right)$  and if $\mathcal{L}$ satisfies conditions $\left(\alpha\right)$ and $\left(\beta\right)$, then $\left(2\right)$ implies $\left(3a\right)$.
 Moreover, if $\mathcal{L}$ satisfies conditions $\left(\alpha\right),\,\,  \left(\mu\right),\,\,   \left(\delta\right)$
and $\left(I\right)$, then all above three conditions are equivalent.
\end{cor}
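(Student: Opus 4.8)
The plan is to recognize that Corollary~\ref{cor:(2.3.17)} is nothing but the specialization $\mathcal{K}=R\textup{-Mod}$ of the three preceding results, so that the entire proof reduces to invoking Theorem~\ref{thm:(2.3.10)}, Theorem~\ref{thm:(2.3.12)} and Proposition~\ref{prop:(2.3.11)} with this choice of $\mathcal{K}$ and checking that their hypotheses degenerate as desired. First I would record that $R\textup{-Mod}$ is trivially closed under isomorphic copies, submodules, direct sums and $\mathcal{L}$-injective hulls, and that $H_{R\textup{-Mod}}(R)=\{I\leq R\mid R/I\in R\textup{-Mod}\}$ is the full lattice of left ideals of $R$. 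Consequently the phrase ``left ideals of $R$ in $H_{\mathcal{K}}(R)$'' appearing in Theorem~\ref{thm:(2.3.10)} and Proposition~\ref{prop:(2.3.11)} becomes simply ``left ideals of $R$'', the ACC statement (2a) of Proposition~\ref{prop:(2.3.11)} becomes precisely condition (3a) here, and (2b) becomes (3b).

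For the implication $(1)\Rightarrow(2)$ I would apply Theorem~\ref{thm:(2.3.10)} with $\mathcal{K}=R\textup{-Mod}$: condition (1) asserts that every direct sum of $\mathcal{L}$-injective modules is $\mathcal{L}$-injective, which a fortiori supplies the hypothesis of that theorem for families lying in $R\textup{-Mod}$, and the theorem's conclusion is exactly (2). The implications $(2)\Rightarrow(3b)$ and $(2)\Rightarrow(3a)$ (the latter under $(\alpha)$ and $(\beta)$) are then read off from the direction $(1)\Rightarrow(2)$ of Proposition~\ref{prop:(2.3.11)} taken with $\mathcal{K}=R\textup{-Mod}$, since condition (1) of that proposition becomes our (2) while its (2a),(2b) become our (3a),(3b); note that the implication into (2b) is unconditional there, whereas the one into (2a) needs $(\alpha),(\beta)$.

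For the ``Moreover'' clause I would assume $(\alpha),(\mu),(\delta)$ and $(I)$. The key observation is that $(\delta)$ implies $(\beta)$ (and $(\gamma)$), because an inclusion is a homomorphism, so $(\beta)$ is available for free; hence the full equivalence $(2)\Leftrightarrow(3)$ follows from Proposition~\ref{prop:(2.3.11)} specialized to $\mathcal{K}=R\textup{-Mod}$. It then remains only to close the loop with $(2)\Rightarrow(1)$, and this is where Theorem~\ref{thm:(2.3.12)} enters: taking $\mathcal{K}=R\textup{-Mod}$, the side condition $(J_{i+1}/J_{i})\in\mathcal{K}$ in its hypothesis is automatic, so that hypothesis collapses to exactly our condition (2), and the theorem's conclusion is that every direct sum of $\mathcal{L}$-injective modules in $R\textup{-Mod}$ — that is, every direct sum of $\mathcal{L}$-injective modules — is $\mathcal{L}$-injective, which is (1). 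Combining $(1)\Rightarrow(2)$ from Theorem~\ref{thm:(2.3.10)}, $(2)\Rightarrow(1)$ from Theorem~\ref{thm:(2.3.12)}, and $(2)\Leftrightarrow(3)$ from Proposition~\ref{prop:(2.3.11)} yields the equivalence of all three conditions.

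The only real subtlety, and the step I would watch most carefully, is the bookkeeping of which axioms each cited result demands against those granted in each clause of the statement: verifying that $(\delta)\Rightarrow(\beta)$ supplies the $(\beta)$ needed by Proposition~\ref{prop:(2.3.11)}, and confirming that setting $\mathcal{K}=R\textup{-Mod}$ genuinely trivializes the quotient condition $(J_{i+1}/J_{i})\in\mathcal{K}$ of Theorem~\ref{thm:(2.3.12)} so that no hidden hypothesis survives. There is no new module-theoretic content to be produced here; all the work lies in matching the specialized hypotheses and conclusions of the earlier results to conditions (1)--(3).
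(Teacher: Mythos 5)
Your proposal is correct and follows essentially the same route as the paper: specialize $\mathcal{K}=R\textup{-Mod}$, get $(1)\Rightarrow(2)$ from Theorem~\ref{thm:(2.3.10)}, $(2)\Rightarrow(3a),(3b)$ from Proposition~\ref{prop:(2.3.11)}, and close the loop $(3)\Rightarrow(2)\Rightarrow(1)$ via Proposition~\ref{prop:(2.3.11)} and Theorem~\ref{thm:(2.3.12)}. Your explicit remark that $(\delta)$ implies $(\beta)$, which supplies the hypothesis Proposition~\ref{prop:(2.3.11)} needs in the ``Moreover'' clause, is a point the paper leaves implicit, but the argument is the same.
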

\begin{proof} $\left(1\right)\Rightarrow\left(2\right)$ Let $\mathcal{L}$ satisfy conditions $\left(\alpha\right)$ and $\left(\delta\right)$. Take $\K=R\textup{-Mod}$ and apply Theorem~\ref{thm:(2.3.10)}.

$\left(2\right)\Rightarrow\left(3b\right)$ Take $\K=R\textup{-Mod}$ and apply Proposition~\ref{prop:(2.3.11)}.

 $\left(2\right)\Rightarrow\left(3a\right)$ Let $\mathcal{L}$ satisfy conditions $\left(\alpha\right)$ and $\left(\beta\right)$. Take
$\K=R\textup{-Mod}$ and apply Proposition~\ref{prop:(2.3.11)}.

 $\left(3\right)\Rightarrow\left(1\right)$ Let $\mathcal{L}$ satisfy conditions $\left(\alpha\right),\, \,\left(\mu\right),\, \,\left(\delta\right)$ and $\left(I\right)$. Take $\K=R\textup{-Mod}$. By Proposition~\ref{prop:(2.3.11)}, we have that every ascending chain
 $I_{1}\subseteq I_{2}\subseteq...$ of left ideals of $R$ with $I_{\infty}=\bigcup_{j=1}^{\infty}I_{j}$
 $s$-$\mathcal{L}$-dense  in $R,$ terminates. Thus every ascending chain $I_{1}\subseteq I_{2}\subseteq...$
of left ideals of $R$ such that $(I_{j+1}/I_{j})\in \K$,
  $\,\forall j\in\mathbb{N}$  and $I_{\infty}=\bigcup_{j=1}^{\infty}I_{j}$
 $s$-$\mathcal{L}$-dense  in $R,$ terminates.
 Since $\K$ is closed under isomorphic copies and submodules,  we have from Theorem~\ref{thm:(2.3.12)}
 that every direct sum of $\mathcal{L}$-injective modules in $\K$ is $\mathcal{L}$-injective. Thus the class of $\mathcal{L}$-injective
$R$-modules is closed under direct sums.
\end{proof}

\begin{cor}\label{cor:(2.3.18)} Let $\rho$ be any $P$-$filter.$ Then the following  statements are equivalent.

$\left(1\right)$ The class of $\mathcal{L}_{(\rho,\rho_{\infty})}$-injective
$R$-modules is closed under direct sums.

$\left(2\right)$ Every ascending chain $I_{1}\subseteq I_{2}\subseteq...$ of left ideals of $R$ with $I_{\infty}=\bigcup_{j=1}^{\infty}I_{j}$
 $s$-$\mathcal{L}_{(\rho,\rho_{\infty})}$-dense  in $R,$ terminates.

$\left(3\right)$ The following conditions hold.

$\quad$$\left(a\right)$ Every ascending chain $I_{1}\subseteq I_{2}\subseteq...$ of $s$-$\mathcal{L}_{(\rho,\rho_{\infty})}$-dense left ideals of $R,$
terminates.

$\quad$$\left(b\right)$ For every ascending chain $I_{1}\subseteq I_{2}\subseteq...$ of left ideals of $R$ with  $I_{\infty}=\bigcup_{j=1}^{\infty}I_{j}$  \, $s$-$\mathcal{L}_{(\rho,\rho_{\infty})}$-dense in $R,$ there exists a positive integer $n$ such that $I_{n}$ is $s$-$\mathcal{L}_{(\rho,\rho_{\infty})}$-dense in $R.$

$\left(4\right)$ For any family $\{E_{i}\}_{i\in N}$ of $\mathcal{L}_{(\rho,\rho_{\infty})}$-injective
$R$-modules,  $\bigoplus_{i\in N}E_{i}$ is $\mathcal{L}_{(\rho,\rho_{\infty})}$-injective.
\end{cor}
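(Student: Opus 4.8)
The plan is to deduce the result entirely from the machinery already proved for an arbitrary subclass $\mathcal{L}$, by checking that the specific subclass $\mathcal{L}_{(\rho,\rho_{\infty})}$ fulfils every hypothesis needed to invoke Corollary~\ref{cor:(2.3.17)} and Corollary~\ref{cor:(2.3.6)}. First I would record the closure properties of $\mathcal{L}_{(\rho,\rho_{\infty})}$. Applying Lemma~\ref{lem:(2.1.9)} with $\rho_{1}=\rho$ and $\rho_{2}=\rho_{\infty}$ shows that $\mathcal{L}_{(\rho,\rho_{\infty})}$ satisfies conditions $\left(\alpha\right)$, $\left(\delta\right)$ and $\left(\mu\right)$. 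Conditions $\left(\beta\right)$ and $\left(\gamma\right)$ then come for free, since each is just the special case of $\left(\delta\right)$ in which the homomorphism $g\colon A\to B$ is taken to be an inclusion or an isomorphism respectively.

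The one genuinely new point is the verification of condition $\left(I\right)$ for $\mathcal{L}_{(\rho,\rho_{\infty})}$, and here the choice $\rho_{2}=\rho_{\infty}=\Re$ is decisive. For any $(R,J,f,Q)\in\mathcal{L}_{(\rho,\rho_{\infty})}$ the defining clause $(R,\textup{ker}(f))\in\rho_{\infty}$ holds automatically, so membership is governed solely by the condition $(R,J)\in\rho$. By Lemma~\ref{lem:(2.3.9)} this is precisely the statement that $J$ is $s$-$\mathcal{L}_{(\rho,\rho_{\infty})}$-dense in $R$, which is exactly the conclusion that $\left(I\right)$ requires. Thus $\mathcal{L}_{(\rho,\rho_{\infty})}$ satisfies $\left(\alpha\right)$, $\left(\beta\right)$, $\left(\gamma\right)$, $\left(\delta\right)$, $\left(\mu\right)$ and $\left(I\right)$ simultaneously.

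With these closure properties established, the equivalence of $\left(1\right)$, $\left(2\right)$ and $\left(3\right)$ is immediate from Corollary~\ref{cor:(2.3.17)}, whose strongest clause requires exactly $\left(\alpha\right)$, $\left(\mu\right)$, $\left(\delta\right)$ and $\left(I\right)$. To incorporate $\left(4\right)$, I would invoke Corollary~\ref{cor:(2.3.6)} with $\K=R\textup{-Mod}$: its condition $\left(1\right)$ is verbatim statement $\left(1\right)$ here, namely that the class of $\mathcal{L}_{(\rho,\rho_{\infty})}$-injective modules is closed under direct sums, while its condition $\left(3\right)$ is verbatim statement $\left(4\right)$ here. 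Because $\mathcal{L}_{(\rho,\rho_{\infty})}$ satisfies $\left(\alpha\right)$, $\left(\mu\right)$, $\left(\delta\right)$ and $\left(\gamma\right)$, Corollary~\ref{cor:(2.3.6)} delivers the full equivalence of those two conditions, giving $\left(1\right)\Leftrightarrow\left(4\right)$ and closing the chain. The only step demanding any real argument is the verification of $\left(I\right)$; the remainder is simply assembling earlier corollaries.
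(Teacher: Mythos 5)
Your proposal is correct and follows essentially the same route as the paper: the paper likewise cites Lemma~\ref{lem:(2.1.9)} and Lemma~\ref{lem:(2.3.9)} to establish conditions $\left(\alpha\right)$, $\left(\mu\right)$, $\left(\delta\right)$ and $\left(I\right)$ for $\mathcal{L}_{(\rho,\rho_{\infty})}$, then assembles the equivalences from Corollary~\ref{cor:(2.3.17)} and Corollary~\ref{cor:(2.3.6)}. You merely spell out the verification of $\left(I\right)$ (and the derivation of $\left(\beta\right)$, $\left(\gamma\right)$ from $\left(\delta\right)$) in more detail than the paper does.
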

\begin{proof} By Lemma~\ref{lem:(2.1.9)} and Lemma~\ref{lem:(2.3.9)} we have that $\mathcal{L}_{(\rho,\rho_{\infty})}$ satisfies conditions $\left(\alpha\right),\,\left(\mu\right),\,\left(\delta\right)$ and $\left(I\right)$. Thus by Corollary~\ref{cor:(2.3.17)} and Corollary~\ref{cor:(2.3.6)} we have the equivalence of above four statements.
\end{proof}

\begin{cor}\label{cor:(2.3.19)}\textbf{(\cite[Theorem 2.3.8, p. 73]{Cri04}) }The following statements are equivalent:

$\left(1\right)$ $R$ has ACC on $\tau$-dense left ideals and $\tau$ is Noetherian;

$\left(2\right)$ the class of $\tau$-injective $R$-modules is closed under direct sums;

$\left(3\right)$ the class of $\tau$-injective $R$-modules is closed under countable direct sums.
\end{cor}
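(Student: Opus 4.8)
The plan is to observe that all three statements are special instances of the four equivalent conditions in Corollary~\ref{cor:(2.3.18)}, applied to the particular $P$-filter $\rho=\rho_{\tau}$, so that the whole corollary reduces to a translation of terminology. First I would recall that $\rho_{\tau}$ is indeed a $P$-filter by Example~\ref{example:(2.1.8)}(3), that $\mathcal{L}_{\tau}=\mathcal{L}_{(\rho_{\tau},\rho_{\infty})}$ (as noted after Example~\ref{example:(2.1.8)}), and that by Lemma~\ref{lem:(2.2.8)}(1) a module is $\tau$-injective if and only if it is $\mathcal{L}_{\tau}$-injective. Consequently the class of $\tau$-injective $R$-modules coincides with the class of $\mathcal{L}_{(\rho_{\tau},\rho_{\infty})}$-injective $R$-modules, so conditions $(2)$ and $(3)$ of the present corollary become, respectively, conditions $(1)$ and $(4)$ of Corollary~\ref{cor:(2.3.18)} for $\rho=\rho_{\tau}$.

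Next I would pin down the density dictionary. Applying Lemma~\ref{lem:(2.3.9)} with $\rho=\rho_{\tau}$, a left ideal $I$ of $R$ is $s$-$\mathcal{L}_{(\rho_{\tau},\rho_{\infty})}$-dense in $R$ precisely when $(R,I)\in\rho_{\tau}$, and by the definition of $\rho_{\tau}$ in Example~\ref{example:(2.1.8)}(3) this holds precisely when $I$ is $\tau$-dense in $R$. Thus the predicate ``$s$-$\mathcal{L}_{(\rho_{\tau},\rho_{\infty})}$-dense'' and the predicate ``$\tau$-dense'' agree on left ideals of $R$; this is the only identification needed to rewrite the chain conditions of Corollary~\ref{cor:(2.3.18)} in purely $\tau$-theoretic language.

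With this dictionary, condition $(3)$ of Corollary~\ref{cor:(2.3.18)} reads: (a) every ascending chain of $\tau$-dense left ideals of $R$ terminates, i.e.\ $R$ has ACC on $\tau$-dense left ideals, and (b) for every ascending chain $I_{1}\subseteq I_{2}\subseteq\cdots$ whose union $I_{\infty}$ is $\tau$-dense there is an $n$ with $I_{n}$ $\tau$-dense in $R$. But (b) is exactly the definition of $\tau$ being Noetherian recalled in the introduction, so Corollary~\ref{cor:(2.3.18)}$(3)$ is precisely statement $(1)$ of the present corollary. Since Corollary~\ref{cor:(2.3.18)} asserts the equivalence of its conditions $(1)$, $(3)$ and $(4)$, the translated conditions $(2)$, $(1)$ and $(3)$ here are equivalent, which is the claim.

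The main obstacle is bookkeeping rather than mathematics: one must check that $\rho_{\tau}$ satisfies every hypothesis invoked in Corollary~\ref{cor:(2.3.18)}, namely that $\mathcal{L}_{(\rho_{\tau},\rho_{\infty})}$ enjoys $(\alpha)$, $(\mu)$, $(\delta)$ and $(I)$ --- which follows from Lemma~\ref{lem:(2.1.9)} and Lemma~\ref{lem:(2.3.9)} exactly as in that corollary's proof --- and, most delicately, that the abstract condition $(3)(b)$ there matches the introduction's definition of a Noetherian torsion theory verbatim once $s$-density is replaced by $\tau$-density. Provided these identifications are made cleanly, no further argument is required.
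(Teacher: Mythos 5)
Your proposal is correct and follows exactly the route the paper takes: the paper's entire proof is ``Take $\rho=\rho_{\tau}$ and apply Corollary~\ref{cor:(2.3.18)}.'' Your additional bookkeeping --- identifying $\tau$-injectivity with $\mathcal{L}_{(\rho_{\tau},\rho_{\infty})}$-injectivity via Lemma~\ref{lem:(2.2.8)}, translating $s$-$\mathcal{L}_{(\rho_{\tau},\rho_{\infty})}$-density into $\tau$-density via Lemma~\ref{lem:(2.3.9)}, and matching condition $(3b)$ of Corollary~\ref{cor:(2.3.18)} with the definition of a Noetherian torsion theory --- is precisely the translation the paper leaves implicit.
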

\begin{proof} Take $\rho=\rho_{\tau}$ and apply Corollary~\ref{cor:(2.3.18)}.
\end{proof}

\section{$\Sigma$-$\mathcal{L}$-Injective Modules }

 Carl Faith in \cite{Fai66} introduced the concepts of $\sum$-injectivity and countably $\sum$-injectivity as follows. An injective module
$E$ is said to be  $\sum$-injective if $E^{(A)}$ is injective for any index set $A;\,\,E$ is said to be countably $\sum$-injective in case
$E^{(C)}$ is injective for any countable index set $C.$ Faith in \cite{Fai66} proved that an injective $R$-module $E$ is $\sum$-injective
if and only if $R$ satisfies $ACC$ on the $E$-annihilator left ideals if and only if $E$ is countably $\sum$-injective. S. Charalambides
in \cite{Cha06} introduced the concept of $\sum$-$\tau$-injectivity and generalized Faith's result.

$\vphantom{}$

In this section, we introduce the concept of $\sum$-$\mathcal{L}$-injectivity as a general case of $\sum$-injectivity and $\sum$-$\tau$-injectivity
and prove the result (Theorem~\ref{thm:(2.4.4)}) in which we generalize Faith's result \cite[Proposition 3, p. 184]{Fai66}
and \cite[Theorem 16.16, p. 98]{Cha06}.

$\vphantom{}$

We start this section with the following definition of a $\sum$-$\mathcal{L}$-injective module.

\begin{defn}\label{defn:(2.4.1)} Let $E$ be an $\mathcal{L}$-injective module. We say that $E$ is $\sum$-$\mathcal{L}$-injective if $E^{(A)}$ is $\mathcal{L}$-injective for any index set $A.$ On other hand, if $E^{(C)}$ is $\mathcal{L}$-injective for any countable index set $C,$ we say that $E$ is countably $\sum$-$\mathcal{L}$-injective.
\end{defn}

The following corollary is a special case of Corollary~\ref{cor:(2.3.6)}, by taking $\K=\{E\}.$

\begin{cor}\label{cor:(2.4.2)} Consider the following conditions.

$\left(1\right)$ $E$ is $\sum$-$\mathcal{L}$-injective.

$\left(2\right)$ $E$ is countably $\sum$-$\mathcal{L}$-injective.

$\left(3\right)$ $E^{(\mathbb{N})}$ is $\mathcal{L}$-injective.

Then: $\left(1\right)$ implies $\left(2\right)$ and $\left(2\right)$ implies $\left(3\right).$  If $\mathcal{L}$ satisfies  conditions $\left(\alpha\right), \,\,\left(\mu\right)$ and $\left(\delta\right)$, then $\left(2\right)$ implies $\left(1\right).$  \,Moreover, if $\mathcal{L}$ satisfies condition $\left(\gamma\right)$, then $\left(3\right)$ implies $\left(2\right).$
\end{cor}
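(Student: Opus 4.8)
The plan is to derive this corollary as the promised specialization of Corollary~\ref{cor:(2.3.6)}, taking the class $\mathcal{K}$ to be the singleton $\mathcal{K}=\{E\}$. The first thing I would do is record the standing hypothesis built into Definition~\ref{defn:(2.4.1)}: $E$ is assumed to be $\mathcal{L}$-injective. This is precisely what makes $E$ an $\mathcal{L}$-injective module lying in $\mathcal{K}$, so that the class of $\mathcal{L}$-injective $R$-modules in $\mathcal{K}=\{E\}$ is exactly $\{E\}$ and the specialization is legitimate.

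Next I would set up the dictionary between the conditions of Corollary~\ref{cor:(2.3.6)} and those stated here. Since every $\mathcal{L}$-injective module in $\mathcal{K}=\{E\}$ equals $E$, any direct sum of $\mathcal{L}$-injective modules in $\mathcal{K}$ has the form $E^{(A)}$ for some index set $A$; a countable such direct sum is $E^{(C)}$ for a countable $C$; and the constant family $\{E_i\}_{i\in\mathbb{N}}$ with each $E_i=E$ gives $\bigoplus_{i\in\mathbb{N}}E_i=E^{(\mathbb{N})}$. Hence condition $(1)$ of Corollary~\ref{cor:(2.3.6)} reads ``$E^{(A)}$ is $\mathcal{L}$-injective for every index set $A$'', which is exactly $\sum$-$\mathcal{L}$-injectivity of $E$; condition $(2)$ reads ``$E^{(C)}$ is $\mathcal{L}$-injective for every countable $C$'', which is countable $\sum$-$\mathcal{L}$-injectivity; and condition $(3)$ reads ``$E^{(\mathbb{N})}$ is $\mathcal{L}$-injective''. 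Thus the three conditions of Corollary~\ref{cor:(2.3.6)} correspond verbatim to conditions $(1)$, $(2)$, $(3)$ of the present statement.

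Finally I would transport the implications of Corollary~\ref{cor:(2.3.6)} through this dictionary: $(1)\Rightarrow(2)$ and $(2)\Rightarrow(3)$ hold with no extra hypotheses; $(2)\Rightarrow(1)$ holds once $\mathcal{L}$ satisfies $(\alpha)$, $(\mu)$ and $(\delta)$; and $(3)\Rightarrow(2)$ holds once $\mathcal{L}$ satisfies $(\gamma)$. These are exactly the hypotheses imposed in the statement, so nothing further is needed. I do not expect a genuine obstacle here, since the argument is a pure specialization; the only points requiring a moment's care are the bookkeeping in condition $(3)$ (reading the index set $N$ of Corollary~\ref{cor:(2.3.6)} as $\mathbb{N}$ and noting that the constant family $E_i=E$ loses no generality inside $\mathcal{K}=\{E\}$) and the explicit use of the ambient assumption that $E$ is $\mathcal{L}$-injective, which is what lets $E$ qualify as a member of the relevant class.
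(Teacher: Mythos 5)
Your proposal is correct and is exactly the paper's argument: the paper states this corollary as the special case of Corollary~\ref{cor:(2.3.6)} obtained by taking $\mathcal{K}=\{E\}$, with the same translation of the three conditions and the same hypotheses on $\mathcal{L}$ for each implication. Your additional bookkeeping (noting that $E$ is $\mathcal{L}$-injective by Definition~\ref{defn:(2.4.1)}, so it genuinely belongs to the relevant class) is a correct and worthwhile elaboration of what the paper leaves implicit.
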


The following corollary is immediately from Lemma~\ref{lem:(2.1.9)} and Corollary~\ref{cor:(2.4.2)}.

\begin{cor}\label{cor:(2.4.3)} Let $\rho_{1}$ and $\rho_{2}$ be any two $P$-filters.  Then the
following conditions are equivalent for a module $E$.

$\left(1\right)$ $E$ is $\sum$-$\mathcal{L}_{(\rho_{1},\rho_{2})}$-injective.

$\left(2\right)$ $E$ is countably $\sum$-$\mathcal{L}_{(\rho_{1},\rho_{2})}$-injective.

$\left(3\right)$ $E^{(\mathbb{N})}$ is  $\mathcal{L}_{(\rho_{1},\rho_{2})}$-injective.
\end{cor}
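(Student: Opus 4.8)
The plan is to reduce the statement entirely to Corollary~\ref{cor:(2.4.2)} by checking that the class $\mathcal{L}_{(\rho_{1},\rho_{2})}$ fulfills the conditions that corollary requires. First I would invoke Lemma~\ref{lem:(2.1.9)}, which asserts that for any two $P$-filters $\rho_{1}$ and $\rho_{2}$ the subclass $\mathcal{L}_{(\rho_{1},\rho_{2})}$ of $\M$ satisfies conditions $\left(\alpha\right)$, $\left(\delta\right)$ and $\left(\mu\right)$. These are exactly the hypotheses Corollary~\ref{cor:(2.4.2)} uses for the implication $(2)\Rightarrow(1)$.

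The one point that deserves an explicit remark is condition $\left(\gamma\right)$, which Corollary~\ref{cor:(2.4.2)} invokes for the implication $(3)\Rightarrow(2)$ but which Lemma~\ref{lem:(2.1.9)} does not list among its conclusions. Here I would observe that $\left(\delta\right)$ is strictly stronger than $\left(\gamma\right)$: comparing the two conditions, $\left(\delta\right)$ allows an arbitrary homomorphism $g:A\rightarrow B$, whereas $\left(\gamma\right)$ only allows an isomorphism, so taking $g$ to be an isomorphism in $\left(\delta\right)$ yields precisely the conclusion of $\left(\gamma\right)$. Hence $\mathcal{L}_{(\rho_{1},\rho_{2})}$ also satisfies $\left(\gamma\right)$, and in total it satisfies all four of $\left(\alpha\right)$, $\left(\gamma\right)$, $\left(\delta\right)$ and $\left(\mu\right)$.

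With these conditions established, I would simply read off the required implications from Corollary~\ref{cor:(2.4.2)}. The implications $(1)\Rightarrow(2)$ and $(2)\Rightarrow(3)$ hold unconditionally; the implication $(2)\Rightarrow(1)$ follows because $\mathcal{L}_{(\rho_{1},\rho_{2})}$ satisfies $\left(\alpha\right)$, $\left(\mu\right)$ and $\left(\delta\right)$; and the implication $(3)\Rightarrow(2)$ follows because it satisfies $\left(\gamma\right)$. Chaining these as $(1)\Rightarrow(2)\Rightarrow(3)\Rightarrow(2)\Rightarrow(1)$ closes the cycle and yields the equivalence of the three statements.

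I expect no genuine obstacle here, since the corollary is an immediate specialization of Corollary~\ref{cor:(2.4.2)} to the class $\mathcal{L}_{(\rho_{1},\rho_{2})}$. The only subtlety one must not overlook is the tacit appeal to condition $\left(\gamma\right)$ in the direction $(3)\Rightarrow(2)$; verifying that $\left(\delta\right)$ subsumes $\left(\gamma\right)$ is therefore the single substantive step in the argument.
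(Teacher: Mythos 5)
Your proof is correct and follows exactly the paper's route: the paper likewise derives this corollary immediately from Lemma~\ref{lem:(2.1.9)} together with Corollary~\ref{cor:(2.4.2)}. Your explicit observation that $\left(\delta\right)$ subsumes $\left(\gamma\right)$ (needed for $(3)\Rightarrow(2)$) is a small point the paper leaves tacit, and it is correctly justified.
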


\,\,Let \, $E$ be a module. A left ideal $I$ of $R$ is said to be an \, $E$-annihilator if there is \, $N\subseteq E$ \,such that $I=(0:N)=\{r\in R\mid rN=0\}$ (i.e., $I$ is the annihilator of a subset of $E$).

$\vphantom{}$

The following theorem is the main result of this section in which we generalize \cite[Theorem 16.16, p. 98]{Cha06}
and \cite[Proposition 3, p. 184]{Fai66}.

\begin{thm}\label{thm:(2.4.4)} Consider the following three conditions for an $\mathcal{L}$-injective module $E$:

$\left(1\right)$ $E$ is countably $\sum$-$\mathcal{L}$-injective;

$\left(2\right)$ every ascending chain $I_{1}\subseteq I_{2}\subseteq...$
of $E$-annihilators in $R$ with $I_{\infty}=\bigcup_{j=1}^{\infty}I_{j}$
  $s$-$\mathcal{L}$-dense  in $R,$ terminates;

$\left(3\right)$ The following conditions hold.

$\quad$$\left(a\right)$ Every ascending chain $I_{1}\subseteq I_{2}\subseteq...$
of $E$-annihilators in $R$ with $I_{j}$ being $s$-$\mathcal{L}$-dense in $R,\,\forall j\in\mathbb{N},$  terminates.

$\quad$$\left(b\right)$ For every ascending chain $I_{1}\subseteq I_{2}\subseteq...$
of $E$-annihilators in $R$ with  $I_{\infty}=\bigcup_{j=1}^{\infty}I_{j}$
 $s$-$\mathcal{L}$-dense in $R,$  there exists a
positive integer $n$ such that $I_{n}$ is $s$-$\mathcal{L}$-dense
in $R.$

Then: if $\mathcal{L}$ satisfies condition $\left(\delta\right)$,
then $\left(1\right)$ implies $\left(2\right).$  Also, $\left(2\right)$
implies $\left(3b\right)$ and if $\mathcal{L}$ satisfies conditions $\left(\alpha\right)$ and $\left(\beta\right)$,
then $\left(2\right)$ implies $\left(3a\right)$.
Moreover, if $\mathcal{L}$ satisfies conditions $\left(\alpha\right),\,\left(\mu\right),\,\left(\beta\right)$
and $\left(I\right)$, then $\left(3\right)$ implies  $\left(1\right)$.
\end{thm}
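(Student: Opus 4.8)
The plan is to follow Faith's strategy, using the Baer-type criterion (Theorem~\ref{thm:(2.1.11)}) to reduce every extension problem to left ideals of $R$, and manufacturing ascending chains of $E$-annihilators so that condition $(3)$ can be applied. The two bookkeeping implications are immediate and mirror the corresponding steps of Proposition~\ref{prop:(2.3.11)}. For $(2)\Rightarrow(3b)$: if a chain of $E$-annihilators with $s$-$\mathcal{L}$-dense union terminates, then $I_n=I_\infty$ for large $n$, so $I_n$ is itself $s$-$\mathcal{L}$-dense, which is exactly $(3b)$; no hypothesis on $\mathcal{L}$ is needed. For $(2)\Rightarrow(3a)$: given an ascending chain of $E$-annihilators each $s$-$\mathcal{L}$-dense, $I_1$ is $s$-$\mathcal{L}$-dense and $I_1\leq I_\infty\leq R$, so Lemma~\ref{lem:(2.3.8)}(2) (using $(\alpha)$ and $(\beta)$) makes $I_\infty$ $s$-$\mathcal{L}$-dense, and then $(2)$ forces termination.

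For $(1)\Rightarrow(2)$ I would argue by contradiction. Given an ascending chain of $E$-annihilators $I_1\subseteq I_2\subseteq\cdots$ with $I_\infty=\bigcup_j I_j$ $s$-$\mathcal{L}$-dense, write each $I_n=l_R(Y_n)$ with $Y_n=r_E(I_n)$, so the $Y_n$ form a descending family of subsets of $E$ with $l_R(Y_n)=I_n$. If the chain did not terminate I may assume $I_n\subsetneq I_{n+1}$, pick $s_n\in I_{n+1}\setminus I_n=l_R(Y_{n+1})\setminus l_R(Y_n)$, and then $c_n\in Y_n$ with $s_nc_n\neq 0$. The map $f:I_\infty\to E^{(\mathbb{N})}$ defined by $f(a)=(ac_1,ac_2,\ldots)$ lands in the direct sum, since any $a\in I_\infty$ lies in some $I_m=l_R(Y_m)$ and then $ac_n=0$ for $n\geq m$ because $c_n\in Y_n\subseteq Y_m$. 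Since $I_\infty$ is $s$-$\mathcal{L}$-dense we have $(R,I_\infty,I_{I_\infty},I_\infty)\in\mathcal{L}$, and condition $(\delta)$ promotes this to $(R,I_\infty,f,E^{(\mathbb{N})})\in\mathcal{L}$. As $E^{(\mathbb{N})}$ is $\mathcal{L}$-injective, the element form of Theorem~\ref{thm:(2.1.11)} yields $x=(x_n)\in E^{(\mathbb{N})}$ with $ac_n=ax_n$ for all $a$ and $n$; choosing $N$ with $x_n=0$ for $n\geq N$ gives $I_\infty c_N=0$, whence $s_Nc_N=0$ (as $s_N\in I_{N+1}\subseteq I_\infty$), contradicting the choice of $c_N$.

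The substance of the theorem is $(3)\Rightarrow(1)$. By Theorem~\ref{thm:(2.1.11)} (available because $(\alpha)$ and $(\mu)$ hold) it suffices, for each countable index set $C$ and each $(R,I,f,E^{(C)})\in\mathcal{L}$, to produce $x\in E^{(C)}$ with $f(a)=ax$ for all $a\in I$, done uniformly in $C$. Condition $(I)$ makes $I$ $s$-$\mathcal{L}$-dense. For each coordinate $c\in C$ the component $f_c=\pi_c\circ f:I\to E$ should, using the closure hypotheses together with $\mathcal{L}$-injectivity of $E$ and Theorem~\ref{thm:(2.1.11)}, be written as $f_c(a)=ae_c$ for some $e_c\in E$. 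Enumerating $C=\{c_1,c_2,\ldots\}$ and setting $Y_n=\{e_{c_k}\mid k\geq n\}$ produces the ascending chain of $E$-annihilators $L_n=l_R(Y_n)$. Since each $f(a)$ has finite support, every $a\in I$ satisfies $ae_{c_k}=0$ for all large $k$, so $I\subseteq L_\infty=\bigcup_n L_n$, and Lemma~\ref{lem:(2.3.8)}(2) makes $L_\infty$ $s$-$\mathcal{L}$-dense. Now $(3b)$ gives some $L_m$ that is $s$-$\mathcal{L}$-dense, Lemma~\ref{lem:(2.3.8)}(2) propagates $s$-$\mathcal{L}$-density up $L_m\subseteq L_{m+1}\subseteq\cdots$, and $(3a)$ forces stabilization at some $t$ with $L_t=L_\infty$. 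Because $I\subseteq L_\infty=L_t=l_R(Y_t)$, every $a\in I$ kills $Y_t$, i.e. $ae_{c_k}=0$ for $k\geq t$; hence $f(I)\subseteq\bigoplus_{k<t}E$ and $x=(e_{c_1},\ldots,e_{c_{t-1}},0,0,\ldots)$ satisfies $ax=f(a)$, as required.

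The hard part is the coordinatewise step in $(3)\Rightarrow(1)$: extracting, from $(R,I,f,E^{(C)})\in\mathcal{L}$, exactly what is needed to apply $\mathcal{L}$-injectivity of the single module $E$ to each component $f_c$ (equivalently, to corestrict $f$ to the finite sub-sum once the chain stabilizes). The projections $\pi_c:E^{(C)}\to E$ are split epimorphisms but not inclusions, so this change of codomain is precisely where the membership conditions must be handled with care; it is the analogue of the point where Theorem~\ref{thm:(2.3.12)} invokes $(\delta)$. Checking that this is compatible with the hypotheses $(\alpha),(\mu),(\beta),(I)$, and that the construction is genuinely uniform over all countable $C$ (so that one obtains countable $\Sigma$-$\mathcal{L}$-injectivity and not merely $\mathcal{L}$-injectivity of $E^{(\mathbb{N})}$, whose transfer across isomorphisms would otherwise require $(\gamma)$), is the crux of the argument.
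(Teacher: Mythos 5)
Your implications $(1)\Rightarrow(2)$, $(2)\Rightarrow(3a)$ and $(2)\Rightarrow(3b)$ are correct and essentially identical to the paper's: the paper also builds the map $a\mapsto ax$ from a choice of $x_n\in r_{E}(I_n)\setminus r_{E}(I_{n+1})$, promotes $(R,I_\infty,I_{I_\infty},I_\infty)$ to $(R,I_\infty,f,E^{(\mathbb{N})})$ via $(\delta)$, and derives the same contradiction from the finite support of the solution $y$; the two bookkeeping steps likewise match.

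The problem is $(3)\Rightarrow(1)$, where your proposal has a genuine gap at exactly the point you flag as ``the crux.'' Your plan requires, for each coordinate $c$, that the component $f_c=\pi_c\circ f$ satisfy $(R,I,f_c,E)\in\mathcal{L}$ so that $\mathcal{L}$-injectivity of the single module $E$ and Theorem~\ref{thm:(2.1.11)} produce $e_c$ with $f_c(a)=ae_c$. Passing from $(R,I,f,E^{(C)})\in\mathcal{L}$ to $(R,I,\pi_c\circ f,E)\in\mathcal{L}$ is a change of codomain along a split epimorphism, which is licensed by $(\delta)$ or $(\lambda)$ but by none of the hypotheses $(\alpha),(\mu),(\beta),(I)$ actually assumed for this implication; starting instead from $(R,I,I_I,I)\in\mathcal{L}$ (condition $(I)$) does not help, since $(\beta)$ only allows post-composition with inclusions and $f_c$ is not an inclusion. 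The paper avoids projections entirely: it composes $f$ with the \emph{inclusion} $E^{(\mathbb{N})}\hookrightarrow E^{\mathbb{N}}$ (covered by $(\beta)$), solves the lifting problem in the product $E^{\mathbb{N}}$ to obtain a single element $x=(x_1,x_2,\dots)$ of the product with $f(a)=ax$ for all $a\in I$, and only then runs your chain argument on the annihilators $J_{k+1}=l_R(\{x_{k+1},x_{k+2},\dots\})$ to truncate $x$ to an element of $E^{(\mathbb{N})}$. (The paper's own proof is not impeccable either --- it quietly uses Proposition~\ref{prop:(2.1.3)}(2), which needs $(\lambda)$, to get $\mathcal{L}$-injectivity of $E^{\mathbb{N}}$, and invokes $(\gamma)$ at the end --- but its inclusion-into-the-product route is the structurally correct way to sidestep the codomain-change difficulty that your coordinatewise route runs into.) As written, your proof of $(3)\Rightarrow(1)$ is incomplete, and the missing step cannot be supplied by the stated hypotheses along the route you chose.
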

\begin{proof} $\left(1\right)\Rightarrow\left(2\right)$ Let $\mathcal{L}$ satisfy condition $\left(\delta\right)$.
Assume that $\left(2\right)$ does not hold. Then there exist
$E$-annihilators $I_{1},I_{2},...$ in $R$ such that $I_{1}\subsetneqq I_{2}\subsetneqq...$
and $I_{\infty}=\bigcup_{j=1}^{\infty}I_{j}$ is $s$-$\mathcal{L}$-dense in $R.$
Hence we have the following descending chain $r_{E}(I_{1})\supsetneqq r_{E}(I_{2})\supsetneqq...$. For every $n\in\mathbb{N},$  choose $x_{n}\in r_{E}(I_{n})-r_{E}(I_{n+1}),$ thus $x=(x_{n})_{n\in\mathbb{N}}\in E^{\mathbb{N}}.$   Define $f:I_{\infty}\rightarrow E^{\mathbb{N}}$
by $f(a)=ax,\,\forall a\in I_{\infty}.$ It is clear that $f$ is a homomorphism. For a fixed $a\in I_{\infty}$ let $n$ be the
smallest positive integer such that $a\in I_{n}.$ Then, for every
$k\geq 0,\,a\in I_{n}\subseteq I_{n+k}.$ Since $x_{n+k}\in r_{E}(I_{n+k}),$
 $ax_{n+k}=0,\,\forall k\geq 0.$ Hence $ax\in E^{(\mathbb{N})}.$
Thus $f$ is a  homomorphism from $I_{\infty}$ into  $E^{(\mathbb{N})}.$
Since $I_{\infty}$ is $s$-$\mathcal{L}$-dense  in $R,$   $(R,I_{\infty},I_{I_{\infty}},I_{\infty})\in\mathcal{L}.$
 Since $\mathcal{L}$ satisfies  condition $\left(\delta\right)$,
$(R,I_{\infty},f,E^{(\mathbb{N})})\in\mathcal{L}.$ Since  $E^{(\mathbb{N})}$ is   $\mathcal{L}$-injective, it follows from Theorem~\ref{thm:(2.1.11)},   that there exists an element $y\in E^{(\mathbb{N})}$ such that $f(a)=ay,\,\forall a\in I_{\infty}.$
Since $y\in E^{(\mathbb{N})},$  $y=(y_{1},y_{2},...,y_{t},0,0,...),$ for some $t\in\mathbb{N}.$ Since $ax=f(a)=ay,\,\forall a\in I_{\infty},$
  $(ax_{1},ax_{2},...)=(ay_{1},ay_{2},...,ay_{t},0,0,...)$ and this implies that $ax_{t+1}=0,\,\forall a\in I_{\infty}$ and hence
  $x_{t+1}\in r_{E}(I_{\infty}).$ Since $I_{t+2}\subsetneqq I_{\infty},$   $r_{E}(I_{\infty})\subseteq r_{E}(I_{t+2})$ and so
 $x_{t+1}\in r_{E}(I_{t+2})$. This contradicts the fact that $x_{t+1}\in r_{E}(I_{t+1})-r_{E}(I_{t+2}).$

$\left(2\right)\Rightarrow\left(3b\right)$ Let $I_{1}\subseteq I_{2}\subseteq...$ be any ascending chain of $E$-annihilators
in $R$ with $I_{\infty}=\bigcup_{j=1}^{\infty}I_{j}$ \, $s$-$\mathcal{L}$-dense in $R.$
By hypothesis, there exists a positive integer $n$ such that $I_{n}=I_{n+k},\,\forall k\in\mathbb{N}$ and so
$I_{n}=I_{\infty}.$  Hence $I_{n}$ is $s$-$\mathcal{L}$-dense in $R.$

$\left(2\right)\Rightarrow\left(3a\right)$ Let $\mathcal{L}$ satisfy conditions $\left(\alpha\right)$ and $\left(\beta\right)$
 and let  $I_{1}\subseteq I_{2}\subseteq...$ \, be any ascending chain of $E$-annihilators in $R,$  such
that   $I_{j}$ are $s$-$\mathcal{L}$-dense left ideals of $R.$
 Since $I_{1}\subseteq I_{\infty}$ and $\mathcal{L}$ satisfies conditions $\left(\alpha\right)$ and $\left(\beta\right)$, we have from Lemma~\ref{lem:(2.3.8)}  that $I_{\infty}$ is a $s$-$\mathcal{L}$-dense left ideal of $R.$ By hypothesis,
   the chain $I_{1}\subseteq I_{2}\subseteq...$ terminates.

 $\left(3\right)\Rightarrow\left(1\right)$ Let $\mathcal{L}$ satisfy conditions $\left(\alpha\right),\,\left(\mu\right),\,\left(\beta\right)$
and $\left(I\right)$ and let $(R,J,f,E^{(\mathbb{N})})\in\mathcal{L}.$
Since $E$ is $\mathcal{L}$-injective,  we have from Lemma~\ref{lem:(2.1.3)}  that $E^{\mathbb{N}}$ is $\mathcal{L}$-injective. Since $E^{(\mathbb{N})}$
is a submodule of $E^{\mathbb{N}},$   $g=i\circ f:J\rightarrow E^{\mathbb{N}}$ is a homomorphism, where
$i:E^{(\mathbb{N})}\rightarrow E^{\mathbb{N}}$ is the inclusion homomorphism. Since $\mathcal{L}$ satisfies condition $\left(\beta\right)$,  $(R,J,i\circ f,E^{\mathbb{N}})\in\mathcal{L}.$ Thus by Theorem~\ref{thm:(2.1.11)}, there is an element $x=(x_{1},x_{2},...)\in E^{\mathbb{N}}$
such that $g(a)=ax,\,\forall a\in J.$   Thus $f(a)=g(a)=ax,\,\forall a\in J.$   Let $X=\{x_{1},x_{2},...\}$ and
$X_{k}=X\setminus \{x_{1},x_{2},...,x_{k}\}=\{x_{k+1},x_{k+2},...\},$ \, for all $k\geq1.$ Thus we have the following descending chain of
subsets of $X:\,\,\,X\supseteq X_{1}\supseteq X_{2}\supseteq...$; this  yields an ascending chain of $E$-annihilators in
$R$: \,\,$l_{R}(X)\subseteq l_{R}(X_{1})\subseteq l_{R}(X_{2})\subseteq...$.
Let $J_{k+1}=l_{R}(X_{k}),$ \, for all $k\geq 0,$ where $X_{0}=X$ and $J_{\infty}=\bigcup_{i=1}^{\infty}J_{i}.$
Since $f(J)\subseteq E^{(\mathbb{N})},$ \, for any $a\in J,$  it follows that either $ax_{k}=0,\,\forall k\in\mathbb{N},$
or there is a largest integer $n\in\mathbb{N}$ such that $ax_{n}\neq 0.$
If there is a largest integer $n\in\mathbb{N}$ such that $ax_{n}\neq 0,$ then $ax_{n+k}=0,\,\forall k\geq 1.$
 Therefore, $a\in l_{R}(X_{n})=J_{n+1}\subseteq J_{\infty}.$
Thus for any $a\in J,$ we have $a\in J_{\infty},$ and this implies that $J\subseteq J_{\infty}.$ Since $(R,J,f,E^{(\mathbb{N})})\in\mathcal{L}$
and $\mathcal{L}$ satisfies  condition $\left(I\right)$,   $J$ is $s$-$\mathcal{L}$-dense left ideal in $R.$ Since $J\subseteq J_{\infty}$
and $\mathcal{L}$ satisfies conditions $\left(\alpha\right)$ and $\left(\beta\right)$, we have from   Lemma~\ref{lem:(2.3.8)}  that $J_{\infty}$ is $s$-$\mathcal{L}$-dense left ideal in $R.$ Thus we have the following ascending chain $J_{1}\subseteq J_{2}\subseteq...$
of $E$-annihilators in $R$ such that $J_{\infty}$ is $s$-$\mathcal{L}$-dense left ideal in $R.$
By applying condition $\left(3b\right),$ there is $s\in\mathbb{N}$ such that $J_{s}$ is $s$-$\mathcal{L}$-dense
left ideal in $R.$ Since $J_{s}\subseteq J_{s+k},\,\forall k\in\mathbb{N}$ and $\mathcal{L}$ satisfies conditions $\left(\alpha\right)$ and $\left(\beta\right)$, it follows Lemma~\ref{lem:(2.3.8)} implies that $J_{s+k}$ is $s$-$\mathcal{L}$-dense left ideal in $R,\,\forall k\in\mathbb{N}.$ Thus we have the following ascending chain $J_{s}\subseteq J_{s+1}\subseteq...$ of $E$-annihilators
in $R$ such that $J_{s+k}$ is $s$-$\mathcal{L}$-dense left ideal in $R,\,\forall k\in\mathbb{N}.$
By applying condition $\left(3a\right),$ the chain $J_{s}\subseteq J_{s+1}\subseteq...$ becomes stationary
at a left ideal of $R,$ say $J_{t}=l_{R}(X_{t-1})$ and so $J_{t}=J_{\infty}.$
Thus for any $a\in J,$ we have $ax_{t+k}=0,\,\forall k\geq 0$ and
then $a(0,0,...,0,x_{t},x_{t+1},...)=0.$ Take $y=(x_{1},x_{2},...,x_{t-1},0,0,...).$
It is clear that $y\in E^{(\mathbb{N})}$ and for any $a\in J,$ then $f(a)=ax=ax-a(0,0,...,0,x_{t},x_{t+1},0,0,...)=a(x_{1},x_{2},...,x_{t-1},0,0,...)=ay.$
Thus for every $(R,J,f,E^{(\mathbb{N})})\in\mathcal{L},$ there exists an element $y\in E^{(\mathbb{N})}$ such that $f(a)=ay,\,\forall a\in J.$
Since $\mathcal{L}$ satisfies conditions $\left(\alpha\right)$ and $\left(\mu\right)$,   $E^{(\mathbb{N})}$ is $\mathcal{L}$-injective,
by Theorem~\ref{thm:(2.1.11)}. Since $\mathcal{L}$ satisfies condition $\left(\gamma\right)$,  $E$ is countably $\sum$-$\mathcal{L}$-injective,
by Corollary~\ref{cor:(2.4.2)}.
\end{proof}

\begin{cor}\label{cor:(2.4.5)} Let $\rho$ be any $P$-filter. Then the following conditions are equivalent.

$\left(1\right)$ $E$ is countably $\sum$-$\mathcal{L}_{(\rho,\infty)}$-injective.

$\left(2\right)$ Every ascending chain $I_{1}\subseteq I_{2}\subseteq...$
of $E$-annihilators in $R$ with $I_{\infty}=\bigcup_{j=1}^{\infty}I_{j}$
is $s$-$\mathcal{L}_{(\rho,\infty)}$-dense left ideal in $R,$ terminates.

$\left(3\right)$ The following conditions hold.

$\quad$$\left(a\right)$ Every ascending chain $I_{1}\subseteq I_{2}\subseteq...$
of $E$-annihilators in $R$ with $I_{j}$ is $s$-$\mathcal{L}_{(\rho,\infty)}$-dense
left ideals of $R,$$\forall j\in\mathbb{N},$ terminates.

$\quad$$\left(b\right)$ For every ascending chain $I_{1}\subseteq I_{2}\subseteq...$
of $E$-annihilators in $R$ with $I_{\infty}=\bigcup_{j=1}^{\infty}I_{j}$
is $s$-\textup{$\mathcal{L}_{(\rho,\infty)}$}-dense left ideal
in $R,$ there exists a positive integer $n$ such that $I_{n}$ is $s$-$\mathcal{L}_{(\rho,\infty)}$-dense in $R.$

$\left(4\right)$ $E$ is $\sum$-$\mathcal{L}_{(\rho,\infty)}$-injective.
\end{cor}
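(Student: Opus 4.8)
The plan is to recognize this as a direct specialization of the general Theorem~\ref{thm:(2.4.4)} together with Corollary~\ref{cor:(2.4.2)}, exactly parallel to the way Corollary~\ref{cor:(2.3.18)} was deduced from Corollary~\ref{cor:(2.3.17)} and Corollary~\ref{cor:(2.3.6)}. The only real work is to check that the class $\mathcal{L}_{(\rho,\infty)}=\mathcal{L}_{(\rho,\rho_{\infty})}$ satisfies every structural hypothesis invoked in those two results.

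First I would record the needed closure conditions. By Lemma~\ref{lem:(2.1.9)}, applied to the two $P$-filters $\rho$ and $\rho_{\infty}=\Re$, the class $\mathcal{L}_{(\rho,\infty)}$ satisfies $\left(\alpha\right)$, $\left(\delta\right)$ and $\left(\mu\right)$. Since an inclusion homomorphism and an isomorphism are in particular homomorphisms, condition $\left(\delta\right)$ immediately yields conditions $\left(\beta\right)$ and $\left(\gamma\right)$ as well. It remains to verify condition $\left(I\right)$: if $(R,J,f,Q)\in\mathcal{L}_{(\rho,\infty)}$, then by the definition of $\mathcal{L}_{(\rho_{1},\rho_{2})}$ we have $(R,J)\in\rho$, and by Lemma~\ref{lem:(2.3.9)} this is exactly the statement that $J$ is $s$-$\mathcal{L}_{(\rho,\infty)}$-dense in $R$, that is, $(R,J,I_{J},J)\in\mathcal{L}_{(\rho,\infty)}$. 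Hence $\mathcal{L}_{(\rho,\infty)}$ satisfies $\left(\alpha\right)$, $\left(\beta\right)$, $\left(\gamma\right)$, $\left(\mu\right)$, $\left(\delta\right)$ and $\left(I\right)$ simultaneously.

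With all these conditions in hand, the equivalences follow by assembling the implications already proved. Applying Theorem~\ref{thm:(2.4.4)}, whose hypotheses are now all met (condition $\left(\delta\right)$ for $\left(1\right)\Rightarrow\left(2\right)$; conditions $\left(\alpha\right),\left(\beta\right)$ for $\left(2\right)\Rightarrow\left(3a\right)$; conditions $\left(\alpha\right),\left(\mu\right),\left(\beta\right),\left(I\right)$ for $\left(3\right)\Rightarrow\left(1\right)$; while $\left(2\right)\Rightarrow\left(3b\right)$ is unconditional), gives the equivalence of statements $\left(1\right)$, $\left(2\right)$ and $\left(3\right)$. To bring in statement $\left(4\right)$, I would invoke Corollary~\ref{cor:(2.4.2)}: since $\mathcal{L}_{(\rho,\infty)}$ satisfies $\left(\alpha\right)$, $\left(\mu\right)$, $\left(\delta\right)$ and $\left(\gamma\right)$, the $\sum$- and countably $\sum$-notions coincide, so that statement $\left(4\right)$ is equivalent to statement $\left(1\right)$. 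Chaining these two blocks of equivalences shows that all four statements are equivalent.

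I do not anticipate a genuine obstacle here, since the content is entirely contained in the two cited results; the step most easily overlooked is the justification of condition $\left(I\right)$ through Lemma~\ref{lem:(2.3.9)}, together with the observation that $\left(\delta\right)$ subsumes $\left(\beta\right)$ and $\left(\gamma\right)$, so that no closure hypothesis required by Theorem~\ref{thm:(2.4.4)} or Corollary~\ref{cor:(2.4.2)} is left unchecked.
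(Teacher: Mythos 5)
Your proposal is correct and follows essentially the same route as the paper, which likewise deduces the equivalence of (1), (2), (3) from Lemma~\ref{lem:(2.1.9)}, Lemma~\ref{lem:(2.3.9)} and Theorem~\ref{thm:(2.4.4)}, and gets $(1)\Leftrightarrow(4)$ from Corollary~\ref{cor:(2.4.2)}. Your explicit verification of condition $\left(I\right)$ via Lemma~\ref{lem:(2.3.9)} and the observation that $\left(\delta\right)$ subsumes $\left(\beta\right)$ and $\left(\gamma\right)$ simply spell out details the paper leaves implicit.
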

\begin{proof} By Lemma~\ref{lem:(2.1.9)}, Lemma~\ref{lem:(2.3.9)} and Theorem~\ref{thm:(2.4.4)}, we have the equivalence
of $\left(1\right),\,\left(2\right)$ and $\left(3\right).$

$\left(1\right)\Leftrightarrow\left(4\right)$ By Corollary~\ref{cor:(2.4.2)}.
\end{proof}

\begin{cor}\label{cor:(2.4.6)} \textbf{(\cite[Theorem 16.16, p. 98]{Cha06})} Let $\tau$ be any hereditary torsion theory and let $E$ be $\tau$-injective module. Then the following conditions are equivalent.

$\left(1\right)$ $E$ is countably $\sum$-$\tau$-injective.

$\left(2\right)$ Every ascending chain $I_{1}\subseteq I_{2}\subseteq...$
of $E$-annihilators in $R$ with $I_{\infty}=\bigcup_{j=1}^{\infty}I_{j}$
is  $\tau$-dense left ideal in $R,$ terminates.

$\left(3\right)$ The following conditions hold.

$\quad$$\left(a\right)$ Every ascending chain $I_{1}\subseteq I_{2}\subseteq...$
of $E$-annihilators in $R$ with $I_{j}$ is  $\tau$-dense
left ideals of $R,\,\forall j\in\mathbb{N},$ terminates.

$\quad$$\left(b\right)$ For every ascending chain $I_{1}\subseteq I_{2}\subseteq...$
of $E$-annihilators in $R$ with $I_{\infty}=\bigcup_{j=1}^{\infty}I_{j}$
is  $\tau$-dense left ideal in $R,$ there exists a positive
integer $n$ such that $I_{n}$ is  $\tau$-dense in $R.$

$\left(4\right)$ $E$ is $\sum$-$\tau$-injective.
\end{cor}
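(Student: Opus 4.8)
The plan is to deduce this purely torsion-theoretic statement from the $P$-filter version already established in Corollary~\ref{cor:(2.4.5)}, by specializing to the filter $\rho=\rho_{\tau}$. The conceptual point is that, since $\tau$ is hereditary, $\rho_{\tau}$ is a genuine $P$-filter (Example~\ref{example:(2.1.8)}(3)) and $\mathcal{L}_{\tau}=\mathcal{L}_{(\rho_{\tau},\rho_{\infty})}$ by the notation fixed in Section~2; hence every instance of $\mathcal{L}_{(\rho,\infty)}$ appearing in Corollary~\ref{cor:(2.4.5)} collapses to $\mathcal{L}_{\tau}$ once $\rho=\rho_{\tau}$ is substituted. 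Because all the structural hypotheses $\left(\alpha\right),\left(\beta\right),\left(\mu\right),\left(\delta\right)$ and $\left(I\right)$ needed in Theorem~\ref{thm:(2.4.4)} are automatic for $\mathcal{L}_{(\rho_{\tau},\infty)}$ via Lemma~\ref{lem:(2.1.9)} and Lemma~\ref{lem:(2.3.9)}, Corollary~\ref{cor:(2.4.5)} applies verbatim and no hypothesis needs rechecking.

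Next I would set up the dictionary translating the four $\mathcal{L}_{\tau}$-conditions of Corollary~\ref{cor:(2.4.5)} into the four $\tau$-conditions stated here. For the injectivity side, Lemma~\ref{lem:(2.2.8)}(1) gives that $E^{(A)}$ is $\tau$-injective if and only if it is $\mathcal{L}_{\tau}$-injective for every index set $A$; combined with Definition~\ref{defn:(2.4.1)} this identifies ``countably $\sum$-$\mathcal{L}_{\tau}$-injective'' with ``countably $\sum$-$\tau$-injective'', and likewise matches the unrestricted $\sum$ notion in statement (4). For the chain side, Lemma~\ref{lem:(2.3.9)} applied with $\rho=\rho_{\tau}$ shows that a left ideal $I$ is $s$-$\mathcal{L}_{(\rho_{\tau},\infty)}$-dense in $R$ exactly when $(R,I)\in\rho_{\tau}$, which by Example~\ref{example:(2.1.8)}(3) means precisely that $I$ is $\tau$-dense in $R$. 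The notion of $E$-annihilator is ring-theoretic and so is unchanged in passing between the two statements.

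Under this dictionary, conditions (1)--(4) of the present corollary become, word for word, conditions (1)--(4) of Corollary~\ref{cor:(2.4.5)} for the choice $\rho=\rho_{\tau}$, so the asserted equivalences follow at once. The only thing requiring care is the soundness of this translation rather than any fresh module-theoretic argument: I must confirm that $\rho_{\tau}$ really is a $P$-filter (so that Corollary~\ref{cor:(2.4.5)} is applicable) and that the $s$-$\mathcal{L}_{(\rho_{\tau},\infty)}$-dense and $\tau$-dense vocabularies agree on left ideals of $R$. Both facts are immediate from the cited results, so I do not anticipate a genuine obstacle; the proof reduces to the single line ``take $\rho=\rho_{\tau}$ and apply Corollary~\ref{cor:(2.4.5)}.''
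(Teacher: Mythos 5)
Your proposal is correct and is exactly the paper's argument: the paper's proof of this corollary is the one line ``take $\rho=\rho_{\tau}$ and apply Corollary~\ref{cor:(2.4.5)}.'' The additional dictionary you spell out (via Lemma~\ref{lem:(2.2.8)}, Lemma~\ref{lem:(2.3.9)} and Example~\ref{example:(2.1.8)}(3)) is just the justification the paper leaves implicit.
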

\begin{proof} By taking a $P$-filter $\rho=\rho_{\tau}$ and applying Corollary~\ref{cor:(2.4.5)}.
\end{proof}

\begin{cor}\label{cor:(2.4.7)} \textbf{(\cite[Proposition 3, p. 184]{Fai66})} The following conditions on an injective module $E$ are equivalent.

$\left(1\right)$ $E$ is countably $\sum$-injective.

$\left(2\right)$ $R$ satisfies the $ACC$ on the $E$-annihilators
left ideals.

$\left(3\right)$ $E$ is $\sum$-injective.
\end{cor}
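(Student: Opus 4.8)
The plan is to obtain this corollary as the special case of Corollary~\ref{cor:(2.4.5)} in which the $P$-filter $\rho$ is taken to be $\rho_{\infty}=\Re$. First I would record the two identifications that make the specialization work. On the one hand, $\mathcal{L}_{(\rho_{\infty},\rho_{\infty})}=\mathcal{L}_{(\Re,\Re)}=\M$, since the defining membership conditions $(M,N)\in\Re$ and $(M,\textup{ker}(f))\in\Re$ hold for \emph{every} quadruple in $\M$; hence by Lemma~\ref{lem:(2.2.7)}$\left(1\right)$ a module is $\mathcal{L}_{(\rho_{\infty},\rho_{\infty})}$-injective if and only if it is injective. Consequently ``countably $\sum$-$\mathcal{L}_{(\rho_{\infty},\infty)}$-injective'' coincides with ``countably $\sum$-injective'' and ``$\sum$-$\mathcal{L}_{(\rho_{\infty},\infty)}$-injective'' coincides with ``$\sum$-injective'', matching conditions $\left(1\right)$ and $\left(3\right)$ of the present corollary with conditions $\left(1\right)$ and $\left(4\right)$ of Corollary~\ref{cor:(2.4.5)}.

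Next I would dispose of the $s$-$\mathcal{L}$-density hypotheses, which is where the real simplification occurs. By Lemma~\ref{lem:(2.3.9)}, for a $P$-filter $\rho$ a submodule $N$ is $s$-$\mathcal{L}_{(\rho,\infty)}$-dense in $M$ exactly when $(M,N)\in\rho$. Taking $\rho=\Re$, every pair $(M,N)$ lies in $\rho$, so \emph{every} left ideal of $R$ is $s$-$\mathcal{L}_{(\rho_{\infty},\infty)}$-dense in $R$. Therefore the qualifier ``with $I_{\infty}=\bigcup_{j=1}^{\infty}I_{j}$ is $s$-$\mathcal{L}_{(\rho,\infty)}$-dense'' in condition $\left(2\right)$ of Corollary~\ref{cor:(2.4.5)} becomes vacuous, and condition $\left(2\right)$ reduces precisely to ``every ascending chain of $E$-annihilators in $R$ terminates'', i.e. $R$ satisfies the $ACC$ on $E$-annihilator left ideals. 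This is condition $\left(2\right)$ of the present corollary.

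Finally I would invoke Corollary~\ref{cor:(2.4.5)} directly: with $\rho=\rho_{\infty}=\Re$ that corollary gives the equivalence of its conditions $\left(1\right)$, $\left(2\right)$ and $\left(4\right)$, which under the two identifications above are exactly conditions $\left(1\right)$, $\left(2\right)$ and $\left(3\right)$ here. Since $\Re$ is a $P$-filter (it is the filter $\rho_{\infty}$ of Example~\ref{example:(2.1.8)}$\left(2\right)$), Corollary~\ref{cor:(2.4.5)} applies without further hypotheses, and the proof is complete. I do not expect a genuine obstacle in this argument; the only point requiring care is the bookkeeping of the previous paragraph, namely confirming that universal $s$-$\mathcal{L}$-density collapses conditions $\left(2\right)$, $\left(3a\right)$ and $\left(3b\right)$ of Corollary~\ref{cor:(2.4.5)} into the single plain $ACC$ statement, so that no residual density side-condition survives.
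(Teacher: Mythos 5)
Your proposal is correct and is essentially the paper's own proof, which reads in full: ``By taking $\rho=\Re$ and applying Corollary~\ref{cor:(2.4.5)}.'' The additional bookkeeping you supply --- that $\mathcal{L}_{(\Re,\Re)}=\M$ makes $\mathcal{L}$-injectivity coincide with injectivity, and that by Lemma~\ref{lem:(2.3.9)} every left ideal is $s$-$\mathcal{L}_{(\Re,\infty)}$-dense so the density side-conditions collapse --- is exactly the verification the paper leaves implicit.
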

\begin{proof} By taking $\rho=\Re$ and applying Corollary~\ref{cor:(2.4.5)}.
\end{proof}

\begin{cor}\label{cor:(2.4.8)} Let $\mathcal{L}$ satisfy  conditions $\left(\alpha\right),\,\left(\mu\right)$
and $\left(\delta\right)$ and let $\{E_{i}\mid1\leq i\leq n\}$ be a family of modules. If $E_{i}$ is $\sum$-$\mathcal{L}$-injective,
$\,\forall i=1,2,...,n,$ then $\bigoplus_{i=1}^{n}E_{i}$ is $\sum$-$\mathcal{L}$-injective.
\end{cor}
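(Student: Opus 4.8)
The plan is to reduce the statement to the already-established closure of $\mathcal{L}$-injectivity under finite direct sums. First I would fix an arbitrary index set $A$ and use the canonical isomorphism
\[
\Big(\bigoplus_{i=1}^{n}E_{i}\Big)^{(A)}\;\cong\;\bigoplus_{i=1}^{n}E_{i}^{(A)},
\]
which merely regroups the coordinates of a direct sum. Since each $E_{i}$ is $\sum$-$\mathcal{L}$-injective, each $E_{i}^{(A)}$ is $\mathcal{L}$-injective by Definition~\ref{defn:(2.4.1)}, so the right-hand side is a finite direct sum of $\mathcal{L}$-injective modules.

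The key step is then to observe that a finite direct sum of $\mathcal{L}$-injective modules is $\mathcal{L}$-injective under the present hypotheses. Condition $\left(\lambda\right)$ is the restriction of $\left(\delta\right)$ to split epimorphisms, so $\left(\delta\right)$ implies $\left(\lambda\right)$; hence Corollary~\ref{cor:(2.1.5)} applies and gives that $\bigoplus_{i=1}^{n}E_{i}^{(A)}$ is $\mathcal{L}$-injective. Alternatively, I could prove this directly through the Generalized Baer Criterion (Theorem~\ref{thm:(2.1.11)}, available since $\mathcal{L}$ satisfies $\left(\alpha\right)$ and $\left(\mu\right)$): given $(R,I,f,\bigoplus_{i=1}^{n}E_{i}^{(A)})\in\mathcal{L}$, condition $\left(\delta\right)$ lets me compose $f$ with each coordinate projection $\pi_{j}$ to obtain $(R,I,\pi_{j}\circ f,E_{j}^{(A)})\in\mathcal{L}$, whence $\mathcal{L}$-injectivity of $E_{j}^{(A)}$ yields $m_{j}$ with $(\pi_{j}\circ f)(r)=rm_{j}$ for all $r\in I$; assembling $m=(m_{1},\dots,m_{n})$ gives $f(r)=rm$ on $I$, which is exactly the Baer condition.

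Finally I would transfer $\mathcal{L}$-injectivity back across the isomorphism: since $\left(\gamma\right)$ is the special case of $\left(\delta\right)$ in which the target map is an isomorphism, $\left(\delta\right)$ implies $\left(\gamma\right)$, and Remark~\ref{rem:(1.2.7)} then guarantees that the class of $\mathcal{L}$-injective modules is closed under isomorphism. Consequently $(\bigoplus_{i=1}^{n}E_{i})^{(A)}$ is $\mathcal{L}$-injective for every index set $A$, which is precisely the assertion that $\bigoplus_{i=1}^{n}E_{i}$ is $\sum$-$\mathcal{L}$-injective.

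I do not expect a serious obstacle: the only points requiring care are the two implications $\left(\delta\right)\Rightarrow\left(\lambda\right)$ and $\left(\delta\right)\Rightarrow\left(\gamma\right)$ among the structural conditions on $\mathcal{L}$, together with the routine check that the regrouping isomorphism respects the module structure so that $\mathcal{L}$-injectivity is genuinely inherited. Everything else is a direct appeal to Corollary~\ref{cor:(2.1.5)} (or Theorem~\ref{thm:(2.1.11)}) and Remark~\ref{rem:(1.2.7)}.
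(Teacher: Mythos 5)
Your proof is correct and takes essentially the same route as the paper's: regroup $\bigl(\bigoplus_{i=1}^{n}E_{i}\bigr)^{(A)}$ as $\bigoplus_{i=1}^{n}E_{i}^{(A)}$, note each summand is $\mathcal{L}$-injective by hypothesis, and apply Corollary~\ref{cor:(2.1.5)} (legitimate since $(\delta)\Rightarrow(\lambda)$) together with closure under isomorphism from Remark~\ref{rem:(1.2.7)} (legitimate since $(\delta)\Rightarrow(\gamma)$). The only cosmetic difference is that you verify the defining condition for an arbitrary index set $A$ directly, whereas the paper checks only $A=\mathbb{N}$ and then upgrades to all index sets via Corollary~\ref{cor:(2.4.2)}; both are valid.
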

\begin{proof} Since $E_{i}$ is $\sum$-$\mathcal{L}$-injective, \,$\forall i=1,2,...,n,$  $E_{i}^{(\mathbb{N})}$ is $\mathcal{L}$-injective, $\,\forall i=1,2,...,n.$ Thus by Corollary~\ref{cor:(2.1.5)}, we have that $\bigoplus_{i=1}^{n}E_{i}^{(\mathbb{N})}$
is $\mathcal{L}$-injective. Since
$(\bigoplus_{i=1}^{n}E_{i})^{(\mathbb{N})}=(E_{1}\oplus E_{2}\oplus...\oplus E_{n})^{(\mathbb{N})}=E_{1}^{(\mathbb{N})}\oplus E_{2}^{(\mathbb{N})}\oplus...\oplus E_{n}^{(\mathbb{N})}=\bigoplus_{i=1}^{n}E_{i}^{(\mathbb{N})},$
  $(\bigoplus_{i=1}^{n}E_{i})^{(\mathbb{N})}$ is $\mathcal{L}$-injective.
Hence $\bigoplus_{i=1}^{n}E_{i}$ is $\sum$-$\mathcal{L}$-injective,
by Corollary~\ref{cor:(2.4.2)}.
\end{proof}

\begin{cor}\label{cor:(2.4.9)} Let $\rho_{1}$ and $\rho_{2}$ be any two $P$-filters and let $\{E_{i}\mid1\leq i\leq n\}$ be a family of modules.
If $E_{i}$ is $\sum$-$\mathcal{L}_{(\rho_{1},\rho_{2})}$-injective, $\forall i=1,2,...,n,$
 then $\bigoplus_{i=1}^{n}E_{i}$ is $\sum$-$\mathcal{L}_{(\rho_{1},\rho_{2})}$-injective.
\end{cor}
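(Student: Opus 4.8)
The plan is to deduce this statement directly from the general finite-direct-sum result for $\sum$-$\mathcal{L}$-injectivity established in Corollary~\ref{cor:(2.4.8)}, by specializing the ambient subclass $\mathcal{L}$ to $\mathcal{L}_{(\rho_{1},\rho_{2})}$. Corollary~\ref{cor:(2.4.8)} already asserts that whenever $\mathcal{L}$ satisfies conditions $\left(\alpha\right)$, $\left(\mu\right)$ and $\left(\delta\right)$, a finite direct sum of $\sum$-$\mathcal{L}$-injective modules is again $\sum$-$\mathcal{L}$-injective. Consequently the only thing I need to supply is the verification that the particular subclass $\mathcal{L}_{(\rho_{1},\rho_{2})}$ meets exactly these three hypotheses.

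First I would invoke Lemma~\ref{lem:(2.1.9)}, which shows precisely that for any two $P$-filters $\rho_{1}$ and $\rho_{2}$ the subclass $\mathcal{L}_{(\rho_{1},\rho_{2})}$ satisfies $\left(\alpha\right)$, $\left(\delta\right)$ and $\left(\mu\right)$. With this in hand, the hypotheses of Corollary~\ref{cor:(2.4.8)} hold when $\mathcal{L}$ is taken to be $\mathcal{L}_{(\rho_{1},\rho_{2})}$. Then I would simply apply Corollary~\ref{cor:(2.4.8)}: since each $E_{i}$ (for $1\le i\le n$) is $\sum$-$\mathcal{L}_{(\rho_{1},\rho_{2})}$-injective by assumption, the corollary yields that $\bigoplus_{i=1}^{n}E_{i}$ is $\sum$-$\mathcal{L}_{(\rho_{1},\rho_{2})}$-injective, which is exactly the desired conclusion.

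There is essentially no obstacle here, since the statement is a pure specialization of an already-proved general result. All the genuine work, namely the reduction of $\sum$-injectivity of a finite sum to the $\mathcal{L}$-injectivity of $\bigoplus_{i=1}^{n}E_{i}^{(\mathbb{N})}$ via Corollary~\ref{cor:(2.1.5)}, together with the identification $\left(\bigoplus_{i=1}^{n}E_{i}\right)^{(\mathbb{N})}=\bigoplus_{i=1}^{n}E_{i}^{(\mathbb{N})}$, has already been carried out inside the proof of Corollary~\ref{cor:(2.4.8)}. The single point worth double-checking is the bookkeeping that Lemma~\ref{lem:(2.1.9)} delivers precisely the list $\left(\alpha\right)$, $\left(\mu\right)$, $\left(\delta\right)$ demanded by Corollary~\ref{cor:(2.4.8)} and no stronger closure property; a quick inspection confirms that it does, so the two results compose without any additional hypothesis on $\rho_{1}$ or $\rho_{2}$.
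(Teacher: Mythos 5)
Your proposal is correct and coincides exactly with the paper's own proof: the author likewise cites Lemma~\ref{lem:(2.1.9)} to verify that $\mathcal{L}_{(\rho_{1},\rho_{2})}$ satisfies $\left(\alpha\right)$, $\left(\mu\right)$ and $\left(\delta\right)$, and then applies Corollary~\ref{cor:(2.4.8)}. Nothing further is needed.
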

\begin{proof}
By Lemma~\ref{lem:(2.1.9)} and Corollary~\ref{cor:(2.4.8)}.
\end{proof}

We can now state the following result, found in \cite[p. 98]{Cha06} as a corollary.

\begin{cor}\label{cor:(2.4.10)} Let $\tau$ be any hereditary torsion theory and let $\{E_{i}\mid1\leq i\leq n\}$
be a family of modules. If $E_{i}$ is $\sum$-$\tau$-injective,\, $\forall i=1,2,...,n,$ then $\bigoplus_{i=1}^{n}E_{i}$ is $\sum$-$\tau$-injective.
\end{cor}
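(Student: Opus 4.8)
The plan is to reduce this to the already-established Corollary~\ref{cor:(2.4.9)} by passing from the torsion-theoretic language to the $\mathcal{L}$-language. First I would recall, from the discussion preceding Lemma~\ref{lem:(2.2.8)}, that for a hereditary torsion theory $\tau$ one has $\mathcal{L}_{\tau}=\mathcal{L}_{(\rho_{\tau},\rho_{\infty})}$, where $\rho_{\tau}$ is the $P$-filter of $\tau$-dense submodules (Example~\ref{example:(2.1.8)}$(3)$) and $\rho_{\infty}=\Re$. By Lemma~\ref{lem:(2.2.8)}$(1)$, a module is $\tau$-injective precisely when it is $\mathcal{L}_{\tau}$-injective; since $\sum$-$\tau$-injectivity of a module $E$ is by definition the $\tau$-injectivity of every copower $E^{(A)}$, applying this equivalence to each $E^{(A)}$ shows that $E$ is $\sum$-$\tau$-injective if and only if $E$ is $\sum$-$\mathcal{L}_{\tau}$-injective, i.e.\ $\sum$-$\mathcal{L}_{(\rho_{\tau},\rho_{\infty})}$-injective.

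Given this dictionary, the hypothesis that each $E_{i}$ is $\sum$-$\tau$-injective translates into the statement that each $E_{i}$ is $\sum$-$\mathcal{L}_{(\rho_{\tau},\rho_{\infty})}$-injective. I would then set $\rho_{1}=\rho_{\tau}$ and $\rho_{2}=\rho_{\infty}$ and invoke Corollary~\ref{cor:(2.4.9)} directly, obtaining that $\bigoplus_{i=1}^{n}E_{i}$ is $\sum$-$\mathcal{L}_{(\rho_{\tau},\rho_{\infty})}$-injective. Translating back through the same equivalence yields that $\bigoplus_{i=1}^{n}E_{i}$ is $\sum$-$\tau$-injective, as required. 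Internally, Corollary~\ref{cor:(2.4.9)} rests on Lemma~\ref{lem:(2.1.9)}, which guarantees that $\mathcal{L}_{(\rho_{\tau},\rho_{\infty})}$ satisfies conditions $\left(\alpha\right)$, $\left(\mu\right)$ and $\left(\delta\right)$, and on Corollary~\ref{cor:(2.4.8)}, the finite-direct-sum statement for a general $\mathcal{L}$ satisfying those three conditions; so no further hypotheses on $\tau$ are needed beyond heredity.

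There is essentially no analytic obstacle here: the statement is a specialization of Corollary~\ref{cor:(2.4.8)}/Corollary~\ref{cor:(2.4.9)} to the case $\mathcal{L}=\mathcal{L}_{\tau}$, and the genuine mathematical content lives entirely in those earlier results. The only point requiring care is the bookkeeping of the translation step, namely that the equivalence ``$\tau$-injective $\Leftrightarrow$ $\mathcal{L}_{\tau}$-injective'' be applied uniformly to the copowers $E_{i}^{(A)}$ and $\bigl(\bigoplus_{i=1}^{n}E_{i}\bigr)^{(A)}$, not merely to the $E_{i}$ themselves. Because Lemma~\ref{lem:(2.2.8)}$(1)$ holds for \emph{all} modules, this causes no difficulty, and the proof collapses to the single line: take $\rho_{1}=\rho_{\tau}$, $\rho_{2}=\rho_{\infty}$, and apply Corollary~\ref{cor:(2.4.9)}.
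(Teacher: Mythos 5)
Your proof is correct and is essentially the paper's own argument: the paper likewise proves this by taking $\rho_{1}=\rho_{\tau}$ and $\rho_{2}=\Re$ (which is the same as your $\rho_{\infty}$) and applying Corollary~\ref{cor:(2.4.9)}. Your additional care in spelling out the dictionary between $\tau$-injectivity and $\mathcal{L}_{\tau}$-injectivity at the level of the copowers $E^{(A)}$ is left implicit in the paper but is exactly the right justification.
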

\begin{proof} By taking the two $P$-filters $\rho_{1}=\rho_{\tau}$ and $\rho_{2}=\Re$ and applying Corollary~\ref{cor:(2.4.9)}.
\end{proof}

\end{document}